\DeclareSymbolFont{forpolishl}{T1}{cmr}{m}{n}
\DeclareMathSymbol{\lucas}{0}{forpolishl}{'212}
\newcommand{\power}{\mathcal{P}}
\renewcommand{\emptyset}{\varnothing}
\newcommand{\lang}{\mathcal{L}}
\newcommand{\iimplies}{\rightarrow}
\newcommand{\Form}{\mathsf{Form}}
\newcommand{\Prop}{\mathsf{Prop}}
\newcommand{\logic}[1]{\mathsf{\mathbf{#1}}}
\newcommand{\framme}[1]{\mathfrak{#1}}
\newcommand{\model}[1]{\mathcal{#1}}
\newcommand{\Val}{\mathrm{Val}}
\newcommand{\free}{\mathfrak{F}}
\newcommand{\var}[1]{\mathcal{#1}}
\newcommand{\alg}[1]{\mathrm{\mathbf{#1}}}
\newcommand{\br}[1]{#1^\sharp}
\newcommand{\Oo}{\mathcal{O}}
\newcommand{\Cl}{\mathrm{Cl}}
\theoremstyle{plain}
\newtheorem{theorem}{Theorem}[section]
\newtheorem{lemma}[theorem]{Lemma}
\newtheorem{proposition}[theorem]{Proposition}
\theoremstyle{definition}
\newtheorem{definition}[theorem]{Definition}
\newtheorem{example}[theorem]{Example}
\theoremstyle{remark}
\newtheorem{remark}[theorem]{Remark}
\numberwithin{equation}{section}
\title[Modal Extensions of {\L}uk. Logic for Modeling Coalitional Power]{Modal Extensions of {\L}ukasiewicz Logic\\ for Modeling Coalitional Power}
\author{Tom\'a\v{s} Kroupa}
\address{Tom\'a\v{s} Kroupa\\ Institute of Information Theory and Automation of the Czech Academy of Sciences, Pod Vod\'arenskou v\v{e}\v{z}\'i~4, 182 08 Prague, Czech Republic}
\email{kroupa@utia.cas.cz}
\thanks{The work of Tom\'a\v{s} Kroupa was supported by grant GA \v{C}R P402/12/1309.}
\author{Bruno Teheux}
\address{Bruno Teheux\\ University of Luxembourg, Facult\'e des Sciences, de la Technologie et de la Communication, 6, rue Richard Coudenhove-Kalergi 
L-1359, Luxembourg}
\email{bruno.teheux@uni.lu}
\thanks{The work of Bruno Teheux was supported by the internal research project F1R-MTH-PUL-15MRO3 of the University of Luxembourg.}
\keywords{Coalition Logic, \L ukasiewicz modal logic, neighborhood semantics, effectivity function, game form}
\begin{document}
\normalem
\begin{abstract}
Modal logics for reasoning about the power of coalitions capture the notion of effectivity functions associated with game forms. The main goal of coalition logics is to provide formal tools for modeling the dynamics of~a~game frame whose states may correspond to different game forms. The~two classes of effectivity functions studied are the families of playable and truly playable effectivity functions, respectively. In this paper we generalize the~concept of effectivity function beyond the yes/no truth scale. This enables us to describe the 
situations in which the coalitions assess their effectivity in degrees, based on functions over the outcomes taking values in a finite \L ukasiewicz chain. Then we introduce two modal extensions of \L ukasiewicz finite-valued logic together with many-valued neighborhood semantics in order to encode the properties of many-valued effectivity functions associated with game forms. As our main results we prove completeness theorems for the two newly introduced modal logics. 
\end{abstract}
\maketitle

\section{Introduction}
Modeling collective actions of agents and capturing their effectivity is among the important research topics on the frontiers of game theory, computer science and mathematical logic. The main efforts are concentrated on answering the following question: what is the set of outcome states that can effectively be implemented by a~coalition of agents? A game-theoretic framework for studying collective actions and their enforceability is based on the notion of game forms. Loosely speaking, a~game form is a pure description of a game and its rules, without regard to the agents' preferences. Game frames enable us to capture a more general action model in which a game form is associated with every state of the frame and the outcome states of the game forms are the states of the frame. From the game-theoretic viewpoint, the game frames are extensive form games with simultaneous moves of~the players; see \cite{OsborneRubinstein94,vanderHoekPauly07}.

The concept of $\alpha$-effectivity  (\cite{AbdouKeiding91,Peleg98}) is one of the key approaches to characterize the coalitional effectivity within game form models. A coalition $C$ is $\alpha$-effective for a~set of outcome states $X$ if the players in $C$ can choose a joint strategy that enforces an outcome in $X$ no matter what strategies are adopted by the other players. The previous definition gives rise to the concept of a (truly) playable effectivity function. In his seminal paper \cite{Pauly2002}, Pauly introduces  Coalition Logic $\logic{CL}_N$ to reason about $\alpha$-effectivity in  game forms with player set $N$. The axiomatization of~$\logic{CL}_N$ is an attempt to characterize  the class of $\alpha$-effectivity functions in a~multi-modal language. Pauly also defined a~neighborhood semantics with respect to which $\logic{CL}_N$ is complete. The logic $\logic{CL}_N$ was subsequently analyzed and extended by many authors; see \cite{AgotnesHoekWooldridge09,BoellaGabbayGenoveseTorre10}. In particular, Goranko et al. \cite{Goranko2013} found a gap in Pauly's characterization of playable effectivity functions (see \cite[Theorem 3.2]{Pauly2002}), which led to the introduction of truly playable effectivity functions.\footnote{We are grateful to Paolo Turrini who brought  the paper \cite{Goranko2013} to our attention.} The reader is invited to consult Section \ref{sect:eff} together with Appendix \ref{appendix:TP}, where we recall all the necessary notions regarding Boolean effectivity functions and the distinction between playable and truly playable functions, respectively.

In this paper we extend the results of Pauly \cite{Pauly2002} and Goranko et al. \cite{Goranko2013} to the situations in which the effectivity of coalitions is evaluated on a finer finite scale than $\{0,1\}$. This generalization is based on several assumptions. First, we assume that coalitions evaluate their effectivity with respect to a certain family of $[0,1]$-valued functions over the state space and not only with respect to the sets of states. Second, the effectivity of coalitions comes in degrees rather than in Boolean values. Third, the underlying logical framework is that of finitely-valued {\L}ukasiewicz logic, which offers a great expressive power, while preserving many desirable properties of logics at the same time.
 
Our main goal is to investigate the properties of many-valued modal logics $\logic{P}_n$ and $\logic{TP}_n$ devised for capturing the refined notion of effectivity. To this end, we proceed as follows. In Section \ref{sec:MVefffunctions} we generalize the notion of $\alpha$-effectivity. Since we use finitely-valued {\L}ukasiewicz logic, our scale is always the set
\begin{equation}\label{def:chain}
\lucas_n=\left\{0, \tfrac{1}{n}, \ldots, \tfrac{n-1}{n}, 1\right\},\enskip \text{where $n$ is a positive integer.}\footnote{Our notation for $\lucas_n$ as a set of cardinality $n+1$ follows \cite{Mundici11}.}
\end{equation}
\noindent
The choice of {\L}ukasiewicz logic is not only a design choice that is suitable for applying the usual operations of MV-algebras to the $\lucas_n$-valued functions over the state space, but also a matter of practical necessity since the  homogeneity property (Definition \ref{defn:nbg}) is among the key features of (playable) many-valued effectivity functions. On the logical side it corresponds to the axioms (1)--(2) of the logic $\logic{P}_n$ in Definition \ref{defn:log}. It is worth mentioning that homogeneity appears as an axiom in other modal languages as well; see \cite{Bou2011,Ost88,Teheux2014}.  The finiteness of our scale of truth degrees is essential for the completeness results, which usually fail badly for modal extensions of infinitely-valued {\L}ukasiewicz logic \cite{Teheux2012}.

We introduce the concept of $\lucas_n$-valued effectivity function whose purpose is to capture the effectivity of coalitions on the scale $\lucas_n$. In order to understand the relation between effectivity functions and game forms, we have to consider the class of playable and truly playable $\lucas_n$-valued effectivity functions, respectively, which are studied in Section \ref{sec:MVef}.
 In particular, we establish a characterization of truly playable $\lucas_{n}$-valued effectivity functions (Theorem \ref{thm:TP}). In Section \ref{sec:coal}, we develop tools to capture the properties of $\lucas_{n}$-valued effectivity functions in a many-valued modal language. These developments rely not only on recent advances in modal extensions of \L ukasiewicz logic (see \cite{Bou2011, Teheux2012}), but they also require the introduction of neighborhood semantics, which has never been considered in the modal many-valued setting before, to the best of our knowledge. The newly introduced logics $\logic{P}_n$ and $\logic{TP}_n$ axiomatize in the many-valued modal language the properties of playable and truly playable $\lucas_{n}$-valued effectivity functions, respectively. Our main results are Theorem~\ref{thm:comp} and Theorem \ref{thm:comp02}, which show that the logics $\logic{P}_n$ and $\logic{TP}_n$ are complete with respect to the corresponding classes of $\lucas_{n}$-valued coalitional frames. The key ingredient in the proof of completeness of $\logic{TP}_{n}$ is the many-valued generalization of the filtration technique for neighborhood models \cite[Chapter 7.5]{Chellas1980}. These mathematical constructions have their own merit and are among the main contributions of the paper, since they provide a ``bag of tricks'' that could be reused to develop neighborhood semantics for other  modal extensions of finite-valued \L ukasiewicz logics.

\section{Game forms and effectivity functions}\label{sect:eff}
We recall basic facts about game forms and effectivity functions; see \cite{AbdouKeiding91}.
In what follows, $S$ denotes a nonempty set,  $N=\{1, \ldots, k\}$ is a finite set and, for any $i \in N$, $\Sigma_i$ denotes a nonempty set. For any set $\Omega$ we denote by $\power \Omega$ the powerset of $\Omega$.

\begin{definition}[{\cite[Definition 3.1]{AbdouKeiding91}}]\label{def:GF}
A \emph{(strategic) game form} is a tuple $G=(N,\{\Sigma_i \mid i \in N\}, S, o)$, where $N$ is a~set of \emph{players}, $\Sigma_{i}$ is a set of \emph{strategies} for each $i\in N$, $S$ is a set of \emph{outcome states}, and $o\colon \prod_{i\in N}\Sigma_i \to S$ is an \emph{outcome function}.
\end{definition}

\noindent The game forms are not to be confused with strategic games. While a preference relation over $S$ must be defined for each player $i\in N$ in a strategic game \cite{OsborneRubinstein94}, no~such requirement exists for a game form. Below we provide some basic examples of~game forms.

\begin{example}
	
(i) Let $N=\{1,2\}$ and $\Sigma_{1}$, $\Sigma_{2}$ be some strategy sets. Assume that the players choose their strategies simultaneously. Then we may set  $S=\Sigma_{1}\times \Sigma_{2}$ and define $o$ as the identity function, which turns  $(\{1,2\},\{\Sigma_{1},\Sigma_{2}\},\Sigma_{1}\times\Sigma_{2},o)$ into a~game form. This game form just records an outcome as the pair of chosen strategies.
(ii) Suppose, on the other hand, that Player 2 makes his choice only after observing the strategic choice of Player 1. This sequential procedure is modeled by a game form such that $\Sigma_{2}$ is the set of all functions $r\colon\Sigma_{1}\to \Sigma_{2}'$, where $\Sigma_{2}'$ can be viewed as the set of all possible  moves that can be played by Player $2$. Hence, $\Sigma_2$ models the replies of Player 2 to the selection of a strategy by Player~1. The~outcome function is given by $o(\sigma_{1},r)=(\sigma_{1},r(\sigma_{1}))$, where $(\sigma_{1},r)\in \Sigma_{1}\times \Sigma_{2}$ and the set of outcome states is $S=\Sigma_{1}\times \Sigma_{2}'$.

\end{example}

An important example arises when the outcome function coincides with some social choice correspondence in the sense of \cite[Chapter 1]{AbdouKeiding91}.
\begin{example}\label{ex:SCorr}
Let $S'$ be any nonempty set of outcome states and $\Pi(S')$ be a~set of~admissible preference relations on $S'$. In most applications, $\Pi(S')$ will be either the set of \emph{total preorders} (reflexive, transitive, and complete binary relations) or the set of \emph{linear orders}. A map $\pi\colon\Pi(S')^N \to \power S'$ is called a \emph{social choice correspondence}. Social choice correspondences implement collective decision procedures mapping a~preference profile $\sigma\in \Pi(S')^N$ of the agents into a set of outcome states that are considered equivalent with respect to $\sigma$. If an agent (or a group of agents) wants to enforce a specific outcome, his/her only possible strategy is to declare a~preference relation that is likely to bring the collective decision into an outcome $\pi(\sigma)$ that contains the desired state. We can describe this scheme as a game form $G=(N, \{\Sigma_i \mid i \in N\}, S, o)$ in which $\Sigma_1=\dotsb = \Sigma_{k}=\Pi(S')$, $S=\power S'$ and $o=\pi$.  
\end{example}

The subsets $C\subseteq N$ are called \emph{coalitions}.  For every coalition $C$ we denote by $\overline{C}$ its set-complement in $N$. If $\sigma_C \in \prod_{i \in C} \Sigma_i$ and $\sigma_{\overline{C}} \in \prod_{i \in \overline{C}} \Sigma_i$, then $\sigma_C\sigma_{\overline{C}}$ is the strategy  tuple in $\prod_{i \in N} \Sigma_i$ defined by $(\sigma_C\sigma_{\overline{C}})_i=(\sigma_C)_i$ if $i \in C$ and  $(\sigma_C\sigma_{\overline{C}})_i=(\sigma_{\overline{C}})_i$ if $i \in \overline{C}$.

\begin{definition}[{\cite[Definition 4.1]{AbdouKeiding91}}]\label{def:GFeffect}
	Let $G$ be a game form. The \emph{effectivity function of $G$} is the mapping $H_G\colon \power N \to \power\power S$ defined as follows:
$X \in H_{G}(C)$ if there exists $\sigma_{C}$ such that for every $\sigma_{\overline{C}}$, we have $o(\sigma_{C}\sigma_{\overline{C}})\in X$.
\end{definition}
\noindent
In other words, the condition $X\in H_{G}(C)$ is true whenever the coalition $C$ has the~power to force the outcome to lie in $X$. We refer to \cite{AbdouKeiding91, Pauly2002,Peleg98} for a~discussion and examples of effectivity functions in game theory and social choice. 

The notion of effectivity function of a game form can be generalized as follows.
\begin{definition}
A mapping $E\colon \power N  \to \power\power S$ is called an \emph{effectivity function}.
\end{definition}
\noindent
The problem of characterizing effectivity functions that are effectivity functions of game forms is solved by introducing the notion of true playability.  It was used by Goranko et al. \cite{Goranko2013} in order to fix the error in Pauly's characterization result  \cite[Theorem 3.2]{Pauly2002}, which was based on the weaker notion of playability. We recall the two playability concepts in the next definition.
\begin{definition}\label{def:BooleanPlay}
Let $E\colon \power N \to \power\power S$ be an effectivity function. We say that $E$
\begin{enumerate}
\item is \emph{superadditive}\label{it:booleSA} if $C_{1}\cap C_{2}=\emptyset$ and $X\in E(C_{1})$, $Y\in E(C_{2})$ imply that $X\cap Y \in E(C_{1}\cup C_{2})$, for every $C_{1},C_{2}\in \power N$;
\item is \emph{outcome monotonic} if $X\in E(C)$ and $X\subseteq Y$ imply $Y\in E(C)$, for every $C\in \power N$;
\item is \emph{$N$-maximal} if $\overline{X}\notin E(\emptyset)$ implies $X\in E(N)$;
\item has the \emph{liveness property} if $\emptyset \notin E(C)$, for every $C\in \power N$;
\item has the \emph{safety property} \label{it:boolesafe} if $S \in E(C)$, for every $C\in \power N$.
\end{enumerate} 
We call $E$ \emph{playable} whenever \eqref{it:booleSA}--\eqref{it:boolesafe} are satisfied. We say that $E$ is \emph{truly playable} if it is playable and $E(\emptyset)$ is a principal filter in $\power S$.\footnote{Our formulation of true playability is different from the original definition yet equivalent to it by \cite[Proposition 5]{Goranko2013}.}
\end{definition}

The following result, which was originally proved in \cite[Theorem 1]{Goranko2013}, amends the gap in the proof of Pauly's correspondence result in case of an infinite outcome space $S$. We provide an alternative proof of Theorem \ref{thm:Goranko} in Appendix~\ref{appendix:TP}, which shows that this result can be considered as a corollary  of Peleg's Theorem {\cite[Theorem 3.5*]{Peleg98}}.

\begin{theorem}[{\cite[Theorem 1]{Goranko2013}}]\label{thm:Goranko}
Let $E\colon \power N \to \power\power S$ be an effectivity function. There exists a game form $G=(N, \{\Sigma_i \mid i \in N\}, S, o)$ satisfying $E=H_{G}$ if and only if $E$ is truly playable.
\end{theorem}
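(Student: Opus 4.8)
The plan is to prove both directions of the equivalence separately, treating the ``only if'' direction as a routine verification and concentrating effort on the ``if'' direction, which is the substantive construction.

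\medskip

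For the \emph{necessity} direction, I would assume $E = H_G$ for some game form $G$ and verify each of the five playability conditions together with the principal-filter condition on $E(\emptyset)$ directly from Definition~\ref{def:GFeffect}. Superadditivity follows by combining the witnessing strategies $\sigma_{C_1}$ and $\sigma_{C_2}$ for disjoint coalitions into a single joint strategy $\sigma_{C_1}\sigma_{C_2}$ for $C_1 \cup C_2$; since the coalitions are disjoint this is well-defined, and the forced outcome lies in $X \cap Y$. Outcome monotonicity and safety are immediate. For $N$-maximality and liveness one unwinds the quantifiers: $E(\emptyset)$ consists of those $X$ that are forced regardless of \emph{all} players' choices, i.e.\ $o(\sigma) \in X$ for every strategy tuple $\sigma$, so $E(\emptyset)$ is exactly the principal filter generated by the range $o\!\left(\prod_{i} \Sigma_i\right)$; this simultaneously gives liveness (the range is nonempty) and the principal-filter requirement for true playability.

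\medskip

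The \emph{sufficiency} direction is where the real work lies, and the stated strategy is to invoke it as a corollary of Peleg's Theorem \cite[Theorem 3.5*]{Peleg98} rather than reconstructing Goranko et al.'s argument. The plan is to start from a truly playable $E$, use the fact that $E(\emptyset)$ is a principal filter to extract a canonical nonempty ``feasible'' subset $S_0 \subseteq S$ (the generator of that filter), and then show that Peleg's hypotheses apply to $E$ restricted appropriately to $S_0$. Peleg's construction produces a game form whose outcome function realizes the given effectivity function; the principal-filter condition is precisely what guarantees that the outcomes forced by the grand coalition $N$ and by the empty coalition match up correctly, closing the gap that invalidated Pauly's original playability-only characterization. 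The game form $G$ built this way will satisfy $E = H_G$ by construction.

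\medskip

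I expect the main obstacle to be the bookkeeping needed to align our Definition~\ref{def:BooleanPlay} of true playability with the precise form of Peleg's hypotheses: Peleg's framework and ours may differ in conventions (for instance in how the empty coalition and the grand coalition are handled, or in finiteness assumptions), so the crux is to verify that ``$E(\emptyset)$ is a principal filter'' is the exact translation of whatever regularity Peleg demands, and to confirm that no hidden finiteness of $S$ is needed. Once that dictionary is established, the construction of $G$ and the verification $E = H_G$ are essentially mechanical, so the conceptual payload is entirely in this correspondence between the true-playability axioms and Peleg's conditions.
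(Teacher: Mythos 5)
Your proposal is correct and follows essentially the same route as the paper's own proof in Appendix~\ref{appendix:TP}: necessity by observing that $H_G(\emptyset)$ is the principal filter generated by the range of $o$, and sufficiency by restricting $E$ to the generator $Z$ of the principal filter $E(\emptyset)$, verifying that this restriction satisfies Peleg's compatibility hypotheses on the structure $\power Z$, and invoking Peleg's Theorem~3.5*. The ``bookkeeping'' you anticipate is exactly what the paper carries out — showing $E'(N)=\power Z\setminus\{\emptyset\}$ via $N$-maximality and superadditivity, then extending the game form obtained on $Z$ back to outcome set $S$ and checking $E=H_G$ using superadditivity and outcome monotonicity.
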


\section{Many-valued effectivity functions}\label{sec:MVefffunctions}
We are going to generalize the concept of effectivity function for an arbitrary game form $G=(N,\{\Sigma_i \mid i \in N\}, S, o)$ and \L ukasiewicz chain $\lucas_n$ as in (\ref{def:chain}). Our goal is to capture the degree or extent to which a coalition $C$ can ``enforce'' a  function $f\colon S \to \lucas_n$. Before stating a formal definition, we will motivate this idea by two situations where such many-valued assessments $f$ may arise.
\begin{enumerate}
	\item\label{item:util} A strategic game form $G$ is made into a strategic game when a utility function (or a preference relation) over the outcome set $S$ is introduced for every player $i\in N$. Thus an arbitrary function $f \in \lucas_n^S$ can be viewed as a~utility function. However, this utility function is not necessarily attached to any player's preference relation. 
	\item\label{item:topo} When the state space $S$ is too large or complex to deal with, the distinction between the subsets of $S$ (equivalently, the functions $S\to \{0,1\}$) and the functions $f \in \lucas_n^S$ may become immaterial. It is not against the spirit of neighborhood semantics to draw a direct parallel with an analogous situation in topology: by Urysohn's lemma any two closed disjoint subsets in a~normal topological space can be arbitrarily closely approximated by a~$[0,1]$-valued continuous function. Thus, for a sufficiently large natural number $n$, we may think of functions $f \in \lucas_n^S$ as members of some limit sequence, which eventually encodes a subset of $S$. This interpretation of an originally finite object is not uncommon in game theory. Indeed, it was one of the motivations for the development of Aumann's theory of ideal coalitions in coalition games with continuum of players; see \cite{AumannShapley74}.
\end{enumerate}

\noindent

\begin{definition}\label{def:GFeff}
Let $G=(N,\{\Sigma_i \mid i \in N\}, S, o)$ be a game form. The $\lucas_n$\emph{-valued effectivity function of $G$} is the map $E_G\colon\power N \times \lucas_n^S \to \lucas_n$ defined by
 \begin{equation}\label{def:eff}
E_{G}(C,f)=\max_{\sigma_C}\min_{\sigma_{\overline{C}}} f(o(\sigma_C\sigma_{\overline{C}})), \qquad C \in \power N, f \in \lucas_n^S,
\end{equation}
where $\sigma_C$ and $\sigma_{\overline{C}}$ range through the set of all joint strategies of coalitions $C$ and~$\overline{C}$, respectively.
\end{definition}

The meaning of definition \eqref{def:eff} is the following: coalition $C$ is effective for $f \in \lucas_n^S$ to the degree $E_{G}(C,f)\in \lucas_n$, disregarding the strategic options of players in the  opposite coalition $\overline{C}$.  Note that the usual Boolean $\alpha$-effectivity function associated with $G$ coincides with the $\lucas_1$-valued effectivity function of $G$.

\begin{remark}\label{rem:adv}
In this paper we do not advocate any particular interpretation of the many-valued effectivity model \eqref{def:eff} as suggested by (\ref{item:util})--(\ref{item:topo}) above, nor do we insist on a special meaning of truth degrees. From the purely mathematical point of view, any such interpretation is irrelevant since it yields the same underlying game form under the assumption of several playability conditions introduced in Section \ref{sec:MVef}. This point will be explained in detail in Remark \ref{rem:MVef}.
\end{remark}

\section{Playability of $\lucas_n$-valued effectivity functions}\label{sec:MVef}
Analogously to the classical literature \cite{Moulin83,Peleg98} on effectivity functions, we can study the notion of effectivity in a setting independent of game forms. Let $S$ be a~set of outcomes and $N$ be a finite player set. We always assume that $|S|\geq 2$ and $|N|\geq 2$.
\begin{definition}An \emph{$\lucas_n$-valued effectivity function} is a mapping $E\colon \power N \times \lucas_n^{S} \to \lucas_n$.
\end{definition} 
Note that the $\lucas_1$-valued effectivity functions are exactly the effectivity functions $\power N\times \{0,1\}^{S}\to \{0,1\}$ arising in the Boolean framework \cite{AbdouKeiding91,Pauly2002}. Therefore we call any $\lucas_{1}$-valued effectivity function a \emph{Boolean effectivity function}. 

Our goal is to characterize the class of $\lucas_n$-valued effectivity functions that are associated with game forms. This characterization is related to the properties of effectivity functions listed in Definition \ref{defn:nbg}. We use the standard connectives of \L ukasiewicz logic and respective operations of MV-algebras; see Appendix \ref{appendix:MV}. In~particular, we always apply the operations of the  MV-algebra $\lucas_n$ to functions $f$ in $\lucas_n^S$ pointwise.  For any $\lucas_n$-valued effectivity function $E$, we define
\(E(\varnothing,-)^{-1}(1)=\{f \in \lucas_n^S \mid E(\varnothing,f)=1\}\).

\begin{definition}\label{defn:nbg} Let $E$ be an $\lucas_n$-valued effectivity function. We say that $E$  
\begin{enumerate}
\item\label{it:opi01} is \emph{outcome monotonic} whenever $f\geq g$ implies $E(C, f)\geq E(C, g)$, for every $C \in \power N$ and every $f,g\in \lucas_n^S$;
\item\label{it:opi03} is \emph{$N$-maximal} if  $\neg E(\varnothing, \neg f)\leq E(N, f)$ for every $f\in \lucas_n^S$;
\item\label{it:opi05} is \emph{regular} if it satisfies $E(C,f)\leq \neg E(\overline{C}, \neg f)$ for every $C\in \power N$ and $f\in \lucas_n^S$;
\item\label{it:opi04} is \emph{superadditive} if  $E(C_1, f)\wedge E(C_2, g)\leq E(C_1\cup C_2, f\wedge g)$ for every pair of~coalitions $C_1, C_2 \in \power N$ such that $C_1 \cap C_2=\varnothing$ and every $f,g \in \lucas_n^S$;
\item\label{it:opi06} is \emph{coalition monotonic} if $E(C,f)\leq E(C',f)$ for every $C\subseteq C' \in \power N$ and every $f \in \lucas_n^S$;
\item\label{it:opi012} is \emph{homogeneous} if $E(C, f\oplus f)=E(C,f) \oplus E(C,f)$ and $E(C, f\odot f)=E(C,f) \odot E(C,f)$ for every $C \in \power N$ and every $f\in \lucas_n^S$;
\item\label{it:opi012bis} has the \emph{liveness property} if $E(C,1)=1$ for every $C\in \power N$;
\item\label{it:opi012ter} has the \emph{safety property} if $E(C,0)=0$ for every $C\in \power N$;
 \item\label{it:opi013} is \emph{principal} if there exists some  $g \in \lucas_n^S$ such that   $\{f \in \lucas_n^S \mid E(\varnothing, f)=1\}=\{f \in \lucas_n^S \mid f\geq \bigodot_{i=1}^{n}g\}$.
\end{enumerate}
We say that $E$ is \emph{playable} whenever it is outcome monotonic, $N$-maximal, superadditive, homogeneous, and has  liveness and safety properties. We say that $E$ is \emph{truly playable} if it is playable and principal. 
\end{definition}
If $n=1$, then the definitions of (truly) playable Boolean effectivity function coincide with the corresponding definitions used in the Boolean setting \cite{Pauly2002,Goranko2013}; cf.~Definition~\ref{def:BooleanPlay}.

Note that if $E$ is an outcome monotonic and homogeneous $\lucas_{n}$-valued effectivity function, then $E(C,-)^{-1}(1)$ is an MV-filter of the MV-algebra $\lucas_n^S$ \cite{Cignoli2000}. Moreover, the~$\lucas_{n}$-valued effectivity function $E$ is principal whenever the MV-filter $E(\varnothing,-)^{-1}(1)$ is principal; see Appendix \ref{appendix:MV}.

It may be difficult to get some intuition about the definition of a homogeneous $\lucas_n$-valued effectivity function in the game form framework. We refer to Remark~\ref{rem:equiv} for an equivalent formulation of this definition. The following result illustrates that homogeneity arises naturally in the context of game forms. For every set $Y\subseteq S$, we denote by $\chi_Y$ the characteristic function of $Y$.

\begin{proposition}\label{prop:nxq}
If $G=(N, \{\Sigma_i \mid i \in N\}, S, o)$ is a game form, then $E_G$ is a~truly playable $\lucas_n$-valued effectivity function.
\end{proposition}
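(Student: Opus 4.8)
The plan is to verify directly each of the conditions entering the definitions of playability and true playability in Definition~\ref{defn:nbg}. One preliminary remark makes several steps uniform: although the strategy sets $\Sigma_i$ may be infinite, every $f$ ranges in the \emph{finite} chain $\lucas_n$, so the inner minima and outer maxima in \eqref{def:eff} are taken over nonempty sets of values drawn from a finite chain and are therefore always attained. It is also convenient to record the two boundary cases. Since $\overline N=\varnothing$ and $\overline\varnothing=N$, the empty tuple is the only joint strategy of the empty coalition, so \eqref{def:eff} collapses to
\begin{equation*}
E_G(N,f)=\max_{\sigma_N} f(o(\sigma_N)), \qquad E_G(\varnothing,f)=\min_{\sigma_N} f(o(\sigma_N)),
\end{equation*}
where $\sigma_N$ ranges over $\prod_{i\in N}\Sigma_i$.

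First I would dispatch the routine conditions. Outcome monotonicity, liveness $E_G(C,1)=1$, and safety $E_G(C,0)=0$ are immediate from the monotonicity of $\max$ and $\min$ and from the fact that constant functions are fixed by both. For homogeneity the key observation is that the unary maps $x\mapsto x\oplus x$ and $x\mapsto x\odot x$ are order-preserving self-maps of $\lucas_n$, and any order-preserving $\phi$ commutes with the maxima and minima occurring in \eqref{def:eff} (which, as noted, are attained over finite value sets), that is $\phi(\max_i a_i)=\max_i\phi(a_i)$ and $\phi(\min_i a_i)=\min_i\phi(a_i)$; applying this twice and using that $\oplus,\odot$ act pointwise on $\lucas_n^S$ yields $E_G(C,f\oplus f)=E_G(C,f)\oplus E_G(C,f)$ and likewise for $\odot$. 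For $N$-maximality I would combine the boundary formulas with the fact that $\neg$ is order-reversing, so $\neg(\min_i a_i)=\max_i\neg a_i$; this gives $\neg E_G(\varnothing,\neg f)=\max_{\sigma_N} f(o(\sigma_N))=E_G(N,f)$, in fact an equality, which is stronger than the required inequality.

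The main obstacle is superadditivity, where the game-form structure is genuinely used. Given disjoint coalitions $C_1,C_2$ and $f,g\in\lucas_n^S$, I would select joint strategies $\sigma^{*}_{C_1}$ and $\sigma^{*}_{C_2}$ attaining the outer maxima defining $E_G(C_1,f)$ and $E_G(C_2,g)$; since $C_1\cap C_2=\varnothing$ these combine into a single joint strategy $\sigma^{*}_{C_1}\sigma^{*}_{C_2}$ of $C_1\cup C_2$. For every completion by the remaining players, the resulting full profile extends $\sigma^{*}_{C_1}$, so the value of $f$ at its outcome is at least $E_G(C_1,f)$, and symmetrically for $g$; taking the pointwise meet and then the inner minimum over completions gives $E_G(C_1\cup C_2,f\wedge g)\ge E_G(C_1,f)\wedge E_G(C_2,g)$. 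Finally, for principality I would exhibit the witness $g=\chi_{o[\prod_{i\in N}\Sigma_i]}$, the characteristic function of the range of $o$: since $\bigodot_{i=1}^{n}h=\chi_{h^{-1}(1)}$ for every $h\in\lucas_n^S$ (because $\bigodot_{i=1}^{n}a$ equals $1$ when $a=1$ and $0$ otherwise on $\lucas_n$), the set $\{f\in\lucas_n^S\mid f\ge\bigodot_{i=1}^{n} g\}$ consists exactly of those $f$ equal to $1$ on the range of $o$, which by the boundary formula for $E_G(\varnothing,-)$ is precisely $E_G(\varnothing,-)^{-1}(1)$. Assembling these verifications shows that $E_G$ is truly playable.
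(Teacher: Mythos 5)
Your proof is correct and follows essentially the same route as the paper's: the routine conditions are dispatched directly, homogeneity comes from the fact that $\tau_\oplus$ and $\tau_\odot$ commute with the (attained) maxima and minima in \eqref{def:eff} (the paper phrases this as their being lattice homomorphisms of $\lucas_n$, which on a chain is the same as order preservation), superadditivity is proved by combining maximizing strategy tuples $\sigma^*_{C_1}$, $\sigma^*_{C_2}$ for the disjoint coalitions, and principality is witnessed by $\chi_{\mathrm{ran}(o)}$ exactly as in the paper. Your added care about attainment of the extrema over finite value sets and the explicit boundary formulas for $E_G(N,-)$ and $E_G(\varnothing,-)$ are sound refinements of detail, not a different argument.
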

\begin{proof}
It follows directly from Definition \ref{def:GFeff} that $E_G$ is outcome monotonic, $N$-maximal and has liveness and safety. Homogeneity of $E_G$  follows from the fact that the maps $\tau_\oplus\colon x \mapsto x \oplus x$ and $\tau_\odot\colon x \mapsto x \odot x$ are lattice homomorphisms of $\lucas_n$. Moreover, $E_G(\varnothing, -)^{-1}(1)=\{g \mid g \geq \chi_{\mathrm{ran}(o)}\}$, which shows that $E_G$ is principal since $\bigodot_{i=1}^{n}\chi_{\mathrm{ran}(o)}=\chi_{\mathrm{ran}(o)}$.

It remains to prove that $E_G$ is superadditive. Let $C_1, C_2\in \power N$ be such that $C_1 \cap C_2=\varnothing$ and $f_1, f_2\in \lucas_n^S$. Denote by $\sigma_{C_1}^*$ and $\sigma_{C_2}^*$ two strategy tuples satisfying
$E(C_1, f_1)=\min_{\sigma_{\overline{C}_1}} f_1(o(\sigma_{C_1}^*\sigma_{\overline{C}_1}))$  and 
$E(C_2, f_2)=\min_{\sigma_{\overline{C}_2}} f_2(o(\sigma_{C_2}^*\sigma_{\overline{C}_2}))$.
It follows  that $E(C_1, f_1) \wedge E(C_2, f_2) =  \min_{\sigma_{\overline{C}_1}} \min_{\sigma_{\overline{C}_2}} f_1(o(\sigma_{C_1}^*\sigma_{\overline{C}_1})) \wedge f_2(o(\sigma_{C_2}^*\sigma_{\overline{C}_2}))$, which is not greater than $\min_{\sigma_{\overline{C}_1 \cap \overline{C}_2} }(f_1\wedge f_2)(o(\sigma_{C_1}^*\sigma_{C_2}^*\sigma_{\overline{C}_1 \cap \overline{C}_2}))$. The conclusion follows from the fact that the latter  is bounded above by $E_G(C_1 \cup C_2, f_1\wedge f_2)$.
\end{proof}

As the next result shows, a Boolean effectivity function can be associated with any homogeneous $\lucas_n$-valued effectivity function. The Boolean algebra $\lucas_1^S=\{0,1\}^{S}$ is called the \emph{Boolean skeleton of $\lucas_n^S$}. In other words, the Boolean skeleton of $\lucas_n^S$ is the powerset of $S$ if we identify the subsets of $S$  with their characteristic functions on $S$. An element $f\in \lucas_n^S$ belongs to the Boolean skeleton of $\lucas_n^S$ if and only if $f\oplus f=f$, and such an element is said to be \emph{idempotent}. For every $\lucas_n$-valued effectivity function $E\colon \power N \times \lucas_n^S \to \lucas_n$, we denote by $\br{E}$ the restriction of $E$ to $\power N \times \lucas_1^S$.

\begin{lemma}\label{lem:uip}
If $E$  is a homogeneous  $\lucas_n$-valued effectivity function, then 
$\br{E}$ is a~Boolean effectivity function. If in addition $E$ is playable (respectively, truly playable), then so is $\br{E}$.
\end{lemma}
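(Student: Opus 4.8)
The plan is to show that $\br{E}$, the restriction of a homogeneous $E$ to the Boolean skeleton $\lucas_1^S = \{0,1\}^S$, first lands in $\{0,1\}$ (so that it is genuinely a Boolean effectivity function), and then to transfer each playability property from $E$ to $\br{E}$ one at a time. For the first point, I would observe that on idempotent arguments $f$ (those with $f \oplus f = f$), homogeneity gives $E(C,f) = E(C,f) \oplus E(C,f)$, which forces $E(C,f)$ to be an idempotent element of $\lucas_n$; but the only idempotents of the MV-chain $\lucas_n$ are $0$ and $1$. Hence $\br{E}$ takes values in $\{0,1\}$, and since its domain is $\power N \times \lucas_1^S$, it is an $\lucas_1$-valued (Boolean) effectivity function in the sense of Definition \ref{defn:nbg}.

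Next I would verify that each defining property of playability restricts correctly. For outcome monotonicity, coalition monotonicity, superadditivity, liveness, and safety, the transfer is immediate: the inequalities and identities defining these conditions in Definition \ref{defn:nbg} are simply inherited by restricting to idempotent $f,g$, and one only needs to note that $\{0,1\}$ is closed under the operations involved --- in particular $f \wedge g$ is idempotent whenever $f$ and $g$ are, so superadditivity stays within the skeleton. The constants $0$ and $1$ are idempotent, so liveness $E(C,1)=1$ and safety $E(C,0)=0$ carry over verbatim. For $N$-maximality and regularity, I would use that on $\lucas_1^S$ the negation $\neg f = 1 - f$ is again idempotent and coincides with Boolean complementation $\overline{\,\cdot\,}$, so the conditions $\neg E(\varnothing,\neg f) \le E(N,f)$ and $E(C,f) \le \neg E(\overline{C}, \neg f)$ become exactly the Boolean $N$-maximality and regularity clauses of Definition \ref{def:BooleanPlay}.

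For the ``truly playable'' part, I would recall from the remark following Definition \ref{defn:nbg} that for homogeneous outcome-monotonic $E$ the set $E(\varnothing,-)^{-1}(1)$ is an MV-filter of $\lucas_n^S$, and that $E$ principal means this filter is generated by $\bigodot_{i=1}^n g$ for some $g$. Restricting to the skeleton, $\br{E}(\varnothing,-)^{-1}(1)$ is the intersection of this filter with the Boolean skeleton $\lucas_1^S$, which is a lattice filter in the powerset $\power S$. I would argue that if the MV-filter is principal with generator $\bigodot_{i=1}^n g$, then its trace on the Boolean skeleton is the principal (lattice) filter generated by the idempotent $\chi_Y$, where $Y = \{s \in S \mid (\bigodot_{i=1}^n g)(s) = 1\}$ is the support of the $n$-fold product; this shows $\br{E}$ is principal, hence truly playable.

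The main obstacle I anticipate is the ``truly playable'' clause, specifically checking that principality survives restriction: one must confirm that the generator $\bigodot_{i=1}^n g$ of the MV-filter does indeed induce a \emph{principal} filter on the Boolean skeleton with the expected idempotent generator, rather than merely some filter. The key computation is that for any idempotent $f \in \lucas_1^S$, the condition $f \ge \bigodot_{i=1}^n g$ is equivalent to $f \ge \chi_Y$ with $Y$ as above --- this follows because $f$ idempotent takes only values in $\{0,1\}$, so comparing $f$ to $\bigodot_{i=1}^n g$ pointwise reduces to whether $f(s)=1$ at every $s$ where the product equals $1$. Once this equivalence is in place the remaining verifications are routine, so I would spend the bulk of the proof making the principality transfer precise and dispatch the other eight conditions briskly.
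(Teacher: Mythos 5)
Your proposal is correct and takes essentially the same route as the paper: homogeneity on idempotent arguments gives $E(C,f)=E(C,f\oplus f)=E(C,f)\oplus E(C,f)$, forcing $\br{E}$ to be $\{0,1\}$-valued, and the playability conditions then transfer by restriction to the skeleton (the paper compresses this, including the principality clause, into the single remark that the relevant conditions involve no existential quantifier over $\lucas_n^S$, whereas you check them one by one and spell out principality explicitly). One small point in your principality step: reducing $f\geq\bigodot_{i=1}^{n}g$ to $f\geq\chi_Y$ uses not only that $f$ is $\{0,1\}$-valued but also that $\bigodot_{i=1}^{n}g$ is --- i.e., that $\bigodot_{i=1}^{n}x\in\{0,1\}$ for every $x\in\lucas_n$, so $\bigodot_{i=1}^{n}g=\chi_Y$ itself; without this, a point where the product took an intermediate positive value would break your stated equivalence, so this elementary fact should be invoked explicitly.
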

\begin{proof}
For any idempotent element $f\in \lucas_n^S$ and for every $C\in \power N$, we obtain
\[
E(C,f)\oplus E(C,f)=E(C,f\oplus f)=E(C,f).
\]
Therefore $\br{E}$ is a Boolean effectivity function.

If in addition $E$ is playable (respectively, truly playable), then it satisfies conditions (\ref{it:opi01}), (\ref{it:opi03}), (\ref{it:opi04}), (\ref{it:opi012bis}) and (\ref{it:opi012ter}) (respectively, conditions (\ref{it:opi01}), (\ref{it:opi03}), (\ref{it:opi04}), (\ref{it:opi012bis}),  (\ref{it:opi012ter}) and  (\ref{it:opi013}))  of Definition \ref{defn:nbg}. It follows that $\br{E}$ also satisfies the analogous Boolean conditions (see Appendix \ref{appendix:TP}) since they do not involve any existential quantifier over the elements of $\lucas_n^S$.
\end{proof}
\noindent
In order to study the playability property, we need more technical preliminaries. To this end, put
\[
\tau_\oplus(x)=x\oplus x \enskip \text{and} \enskip \tau_\odot(x)=x\odot x, \enskip \text{for every $x\in\lucas_{n}$.}
\]

\begin{definition}\label{defi:termes} 
Let $i\in\{1, \ldots, n\}$. We define the function $\tau_{\sfrac{i}{n}}\colon \lucas_{n}\to \lucas_{n}$ by
\[
\tau_{\sfrac{i}{n}}(x)=\begin{cases}
0 & x<\frac{i}{n},\\
1 & x\geq \frac{i}{n},\end{cases}
\]
and we always assume that $\tau_{\sfrac{i}{n}}$ is the interpretation on $\lucas_n$ of an algebraic term which is a composition of finitely many copies of the maps $\tau_\oplus$ and $\tau_\odot$  alone.\footnote{The proof of existence of such a term appears in \cite{Ost88}. }
\end{definition}

Any mapping  $\tau\colon \lucas_{n}\to\lucas_{n}$ can be composed with a function $f\in \lucas_{n}^{S}$. Thus we define  $\tau(f)(s)=\tau(f(s))$ for every $s\in S$ and $f\in \lucas_{n}^{S}$.
\begin{lemma}\label{lem:vaz}
Let $E, E'\colon\power N \times \lucas_n^S \to \lucas_n$ be homogeneous $\lucas_n$-valued effectivity functions. Then $E=E'$ if and only if $\br{E}=\br{E'}$.
\end{lemma}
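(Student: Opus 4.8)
The plan is to prove the forward direction trivially and concentrate on the converse. One direction is immediate: if $E=E'$ then certainly their restrictions to the Boolean skeleton agree, so $\br{E}=\br{E'}$. The substance of the lemma is the converse, namely that a homogeneous $\lucas_n$-valued effectivity function is completely determined by its values on idempotent functions. The key idea is that homogeneity allows us to recover the value $E(C,f)$ for an \emph{arbitrary} $f\in\lucas_n^S$ from the values of $E$ on the characteristic functions (equivalently, idempotents) that encode the ``level sets'' of $f$, and the tool for this is precisely the family of terms $\tau_{\sfrac{i}{n}}$ introduced in Definition \ref{defi:termes}.

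First I would establish the crucial consequence of homogeneity: if $E$ is homogeneous, then for every term $\tau$ built by composing copies of $\tau_\oplus$ and $\tau_\odot$, we have $E(C,\tau(f))=\tau(E(C,f))$ for all $C$ and $f$. This follows by a straightforward induction on the structure of $\tau$, using the two defining equations of homogeneity as the base/inductive steps. In particular, since each $\tau_{\sfrac{i}{n}}$ is by assumption the interpretation of such a term, we obtain the identity
\begin{equation}\label{eq:commute}
E(C,\tau_{\sfrac{i}{n}}(f))=\tau_{\sfrac{i}{n}}(E(C,f)),\qquad i\in\{1,\ldots,n\}.
\end{equation}
The point is that $\tau_{\sfrac{i}{n}}(f)$ is an idempotent element of $\lucas_n^S$ (it takes only the values $0$ and $1$), so its value under $E$ is governed entirely by $\br{E}$; and $\tau_{\sfrac{i}{n}}(E(C,f))$ depends only on whether the scalar $E(C,f)$ is $\geq \frac{i}{n}$ or not.

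The second step is a reconstruction formula expressing $E(C,f)$ in terms of the Boolean quantities $\br{E}(C,\tau_{\sfrac{i}{n}}(f))$. Concretely, for any $x\in\lucas_n$ one has the elementary identity $x=\frac{1}{n}\sum_{i=1}^{n}\tau_{\sfrac{i}{n}}(x)$, since $\tau_{\sfrac{i}{n}}(x)=1$ for exactly the $nx$ many indices $i$ with $\frac{i}{n}\leq x$. Applying this to $x=E(C,f)$ and then invoking \eqref{eq:commute}, I get
\[
E(C,f)=\frac{1}{n}\sum_{i=1}^{n}\tau_{\sfrac{i}{n}}\bigl(E(C,f)\bigr)=\frac{1}{n}\sum_{i=1}^{n}E\bigl(C,\tau_{\sfrac{i}{n}}(f)\bigr)=\frac{1}{n}\sum_{i=1}^{n}\br{E}\bigl(C,\tau_{\sfrac{i}{n}}(f)\bigr),
\]
where the last equality holds because each $\tau_{\sfrac{i}{n}}(f)$ is idempotent. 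This exhibits $E(C,f)$ as a function of $\br{E}$ alone. Running the identical computation for $E'$ and using the hypothesis $\br{E}=\br{E'}$ then forces $E(C,f)=E'(C,f)$ for every $C$ and every $f$, completing the proof.

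I expect the main obstacle to be handling the arithmetic of the reconstruction cleanly, since the sum $\frac{1}{n}\sum_i \tau_{\sfrac{i}{n}}$ mixes ordinary integer arithmetic with the truncated MV-operations; care is needed to justify that the counting identity $x=\frac{1}{n}\sum_{i=1}^{n}\tau_{\sfrac{i}{n}}(x)$ holds in $\lucas_n$ and that no truncation occurs. An alternative that sidesteps the summation is to argue directly from \eqref{eq:commute}: the values $\tau_{\sfrac{i}{n}}(E(C,f))$ for $i=1,\ldots,n$ determine $E(C,f)$ uniquely (the largest $i$ with $\tau_{\sfrac{i}{n}}(E(C,f))=1$ equals $nE(C,f)$), and each such value equals $\br{E}(C,\tau_{\sfrac{i}{n}}(f))$; hence $\br{E}=\br{E'}$ pins down $E(C,f)$. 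Either route reduces the lemma to the single observation that homogeneity lets the idempotent-valued terms $\tau_{\sfrac{i}{n}}$ pass through $E$.
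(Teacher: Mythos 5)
Your proposal is correct and rests on essentially the same idea as the paper's proof: homogeneity lets the idempotent-valued terms $\tau_{\sfrac{i}{n}}$ pass through $E$, so that $E(C,f)\geq \tfrac{i}{n}$ if and only if $\br{E}(C,\tau_{\sfrac{i}{n}}(f))=1$, which is precisely the alternative argument you sketch at the end. Your primary route via the arithmetic identity $E(C,f)=\tfrac{1}{n}\sum_{i=1}^{n}\br{E}(C,\tau_{\sfrac{i}{n}}(f))$ is also valid (the sum is ordinary real arithmetic applied to $\{0,1\}$-values, so no truncation issue arises), but it is just a repackaging of the same level-set mechanism the paper uses.
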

\begin{proof}
Necessity is trivial. To prove sufficiency assume that $\br{E}=\br{E'}$. Let $C\in \power N$, $f\in \lucas_n^S$ and $i \in \{1, \ldots ,n\}$. Since $\tau_{\sfrac{i}{n}}(f)$ is idempotent and $E$ and $E'$ are homogeneous, we have $E(C,f)\geq \frac{i}{n}$ if and only if $1= E(C, \tau_{\sfrac{i}{n}}(f))= E'(C, \tau_{\sfrac{i}{n}}(f))$, which is equivalent to $E'(C, f)\geq \frac{i}{n}$.
\end{proof}
\noindent
The following lemma is straightforward. Its statement uses the notion of the Boolean effectivity function $H_{G}$ associated with a game form $G$; see Definition~\ref{def:GFeffect}.
\begin{lemma}\label{lem:tri}
Let $G$ be a game form. If $H_G$ and $E_G$ are the Boolean and the $\lucas_n$-valued effectivity function associated with $G$, respectively,  then $H_G=E_G^\sharp$.
\end{lemma}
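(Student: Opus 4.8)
The plan is to unwind both definitions and check that they agree on every coalition $C$ and every subset $X\subseteq S$. Recall that $H_G\colon\power N\to\power\power S$ sends $C$ to the collection of those $X$ that $C$ can force, while $E_G^\sharp$ is the restriction of $E_G$ to $\power N\times\lucas_1^S$, i.e.\ to characteristic functions. Under the standard identification of a subset $X\subseteq S$ with its characteristic function $\chi_X\in\lucas_1^S$, and the identification of a Boolean effectivity function $\power N\to\power\power S$ with its ``membership'' presentation $\power N\times\lucas_1^S\to\lucas_1$ sending $(C,\chi_X)$ to $1$ exactly when $X\in H_G(C)$, both sides become maps into $\{0,1\}$ and it suffices to compare their values pointwise.

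First I would fix $C\in\power N$ and $X\subseteq S$ and compute $E_G(C,\chi_X)$ directly from Definition \ref{def:GFeff}. Since $\chi_X$ takes only the values $0$ and $1$, the inner minimum $\min_{\sigma_{\overline C}}\chi_X(o(\sigma_C\sigma_{\overline C}))$ equals $1$ precisely when $o(\sigma_C\sigma_{\overline C})\in X$ for every $\sigma_{\overline C}$, and equals $0$ otherwise. Taking the outer maximum over $\sigma_C$, the value $E_G(C,\chi_X)$ is therefore $1$ if and only if there exists some $\sigma_C$ such that $o(\sigma_C\sigma_{\overline C})\in X$ for all $\sigma_{\overline C}$. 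By Definition \ref{def:GFeffect} this is exactly the condition $X\in H_G(C)$.

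This shows that $E_G^\sharp(C,\chi_X)=1$ iff $X\in H_G(C)$, which under the identifications above is precisely the statement $H_G=E_G^\sharp$. As the excerpt itself remarks, this lemma is straightforward, so I do not expect any genuine obstacle; the only point requiring a little care is the bookkeeping of identifications, namely making explicit that a Boolean effectivity function viewed as a map $\power N\to\power\power S$ and the $\lucas_1$-valued restriction $\power N\times\lucas_1^S\to\lucas_1$ are two presentations of the same object, so that the pointwise equality of $\{0,1\}$-values genuinely yields $H_G=E_G^\sharp$. Once that correspondence is stated, the computation with the characteristic function collapses the $\max\!-\!\min$ expression to the existential-universal quantifier pattern of $\alpha$-effectivity, completing the proof.
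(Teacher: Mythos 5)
Your proof is correct and is exactly the straightforward definitional unwinding the paper has in mind: the paper states Lemma~\ref{lem:tri} without proof, remarking only that it is straightforward, and your computation (the inner $\min$ of $\chi_X$ collapsing to a universal quantifier over $\sigma_{\overline{C}}$, the outer $\max$ to an existential over $\sigma_C$) is precisely the argument being omitted. The care you take with the identification of $H_G\colon\power N\to\power\power S$ with its membership presentation $\power N\times\lucas_1^S\to\lucas_1$ is appropriate, since that identification is what the paper's notation $\br{E_G}$ implicitly relies on.
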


For any $r\in [0,1]$, we denote by $\lceil r\rceil$ the element $\min\{a \in \lucas_n \mid a \geq r\}$. The following lemma will turn out to be crucial for understanding the limits of expressive power of the language associated with (truly) playable $\lucas_n$-valued effectivity functions; see Proposition \ref{prop:imp}.
\begin{lemma}\label{lem:new}
If $H\colon \power N \times \lucas_1^S  \to \lucas_1$ is a playable Boolean effectivity function, then the function $E\colon\power N \times \lucas_n^S \to \lucas_n$ defined by 
\begin{equation}\label{eq:defBEF}
E(C, f)=\max\left\{\tfrac in\in\lucas_{n} \mid H(C,\tau_{\sfrac in}(f))=1\right\},
\end{equation}
for every $C\in\power N$ and $f\in \lucas_{n}^{S}$, is a playable $\lucas_n$-valued effectivity function that satisfies $E^\sharp=H$. If in addition $H$ is truly playable, then so is $E$.
\end{lemma}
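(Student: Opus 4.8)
The heart of the argument is the reformulation
\[
E(C,f)\geq\tfrac in \iff H(C,\tau_{\sfrac in}(f))=1,\qquad i\in\{1,\dots,n\},
\]
valid for every $C\in\power N$ and $f\in\lucas_n^S$, where the maximum in \eqref{eq:defBEF} is read over the index range $\{1,\dots,n\}$ with $\max\varnothing=0$. To obtain it I would first note that the cuts are nested, $\tau_{\sfrac 1n}(f)\geq\cdots\geq\tau_{\sfrac nn}(f)$, so that Boolean outcome monotonicity of $H$ makes $\{i\mid H(C,\tau_{\sfrac in}(f))=1\}$ a down-set of $\{1,\dots,n\}$; the defining maximum in \eqref{eq:defBEF} is then its top element, which yields the displayed equivalence. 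Every clause of playability for $E$ will then be verified by passing through this equivalence to the corresponding Boolean clause for $H$ (Definition~\ref{def:BooleanPlay}, see also Appendix~\ref{appendix:TP}), after rewriting the relevant $\tau_{\sfrac in}$ of an MV-combination as a single cut.

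The monotone clauses are immediate. Outcome monotonicity follows because $f\geq g$ gives $\tau_{\sfrac in}(f)\geq\tau_{\sfrac in}(g)$; liveness and safety follow from $\tau_{\sfrac in}(1)=1$ and $\tau_{\sfrac in}(0)=0$ for $i\geq1$ together with liveness and safety of $H$. The identity $\br{E}=H$ is checked on an idempotent $f=\chi_Y$, where $\tau_{\sfrac in}(f)=f$ for every $i\geq1$, so the reformulation gives $E(C,f)\in\{0,1\}$ with value $H(C,f)$. For superadditivity I would use $\tau_{\sfrac in}(f)\wedge\tau_{\sfrac in}(g)=\tau_{\sfrac in}(f\wedge g)$: if $E(C_1,f)\wedge E(C_2,g)\geq\tfrac in$ then $H(C_1,\tau_{\sfrac in}(f))=H(C_2,\tau_{\sfrac in}(g))=1$, and Boolean superadditivity on the disjoint $C_1,C_2$ yields $H(C_1\cup C_2,\tau_{\sfrac in}(f\wedge g))=1$, i.e.\ $E(C_1\cup C_2,f\wedge g)\geq\tfrac in$. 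For $N$-maximality I would note the complementation identity $\tau_{(p+1)/n}(\neg f)=\neg\tau_{(n-p)/n}(f)$, so that with $p=nE(\varnothing,\neg f)$ and $Y=\{s\mid f(s)\geq(n-p)/n\}$ the inequality $\neg E(\varnothing,\neg f)\leq E(N,f)$ reduces exactly to the Boolean implication $\overline Y\notin H(\varnothing)\Rightarrow Y\in H(N)$.

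The main obstacle is homogeneity, which is where the arithmetic of $\lucas_n$ genuinely enters. The key computations are the cut identities
\[
\tau_{\sfrac in}(f\oplus f)=\tau_{\lceil i/2\rceil/n}(f),\qquad \tau_{\sfrac in}(f\odot f)=\tau_{\lceil (n+i)/2\rceil/n}(f),
\]
obtained from $(f\oplus f)(s)=\min(2f(s),1)$ and $(f\odot f)(s)=\max(2f(s)-1,0)$ by solving $f(s)\geq\tfrac{i}{2n}$ and $f(s)\geq\tfrac{n+i}{2n}$ over $\lucas_n$. Feeding these into the reformulation turns $E(C,f\oplus f)\geq\tfrac in$ into $E(C,f)\geq\lceil i/2\rceil/n$ and $E(C,f\odot f)\geq\tfrac in$ into $E(C,f)\geq\lceil(n+i)/2\rceil/n$. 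A short check that $\min(2x,1)\geq\tfrac in\iff x\geq\lceil i/2\rceil/n$ and $\max(2x-1,0)\geq\tfrac in\iff x\geq\lceil(n+i)/2\rceil/n$ for $x\in\lucas_n$ shows that these coincide with $E(C,f)\oplus E(C,f)\geq\tfrac in$ and $E(C,f)\odot E(C,f)\geq\tfrac in$ respectively; since two elements of $\lucas_n$ dominating the same cuts are equal, homogeneity follows.

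Finally, if $H$ is truly playable, I would write $H(\varnothing)$ as the principal filter generated by some $X_0\in\power S$. Using the reformulation with $i=n$ and $\tau_{\sfrac nn}(f)=\chi_{\{s\mid f(s)=1\}}$ gives $E(\varnothing,f)=1$ iff $\{s\mid f(s)=1\}\supseteq X_0$, i.e.\ iff $f\geq\chi_{X_0}$. As $\bigodot_{i=1}^{n}\chi_{X_0}=\chi_{X_0}$, the function $g=\chi_{X_0}$ witnesses that $E$ is principal, so $E$ is truly playable.
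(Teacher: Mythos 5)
Your proof is correct and takes essentially the same route as the paper's: both rest on the cut reformulation $E(C,f)\geq \tfrac{i}{n} \iff H(C,\tau_{\sfrac{i}{n}}(f))=1$ and reduce each playability clause to its Boolean counterpart via the identities $\tau_{\sfrac{i}{n}}(f\wedge g)=\tau_{\sfrac{i}{n}}(f)\wedge\tau_{\sfrac{i}{n}}(g)$, $\tau_{\sfrac{i}{n}}(f\oplus f)=\tau_{\lceil\sfrac{i}{2n}\rceil}(f)$ (and its $\odot$ analogue), and the complementation of cuts for $N$-maximality, with the principal-filter generator $\chi_{X_0}$ handling true playability. The only cosmetic differences are that you prove superadditivity directly rather than by contradiction and that you make explicit the down-set and empty-maximum conventions the paper leaves implicit.
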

\begin{proof}
Clearly $E^\sharp=H$. It follows from outcome monotonicity of $H$ that $E$ is outcome monotonic. Since $H$ has  liveness and safety, so does the function $E$. To prove that $E$ is superadditive, assume on the contrary that there exist $f,g \in \lucas_n^S$ and $C, D \in \power N$ such that $C \cap D=\varnothing$ and $E(C\cup D, f\wedge g)< \frac{i}{n}\leq E(C,f)\wedge E(D,g)$ for some $i \in\{1, \ldots, n\}$. On the one hand, it follows that $H(C\cup D, \tau_{\sfrac{i}{n}}(f) \wedge \tau_{\sfrac{i}{n}}(g))=H(C\cup D, \tau_{\sfrac{i}{n}}(f \wedge g))=0$. On the other hand, we obtain $H(C, \tau_{\sfrac{i}{n}}(f))=H(D, \tau_{\sfrac{i}{n}}(g))=1$ and by superadditivity of $H$ we get $H(C\cup D, \tau_{\sfrac{i}{n}}(f) \wedge \tau_{\sfrac{i}{n}}(g))=1$, a contradiction.

Now, let $f \in \lucas_n^S$. Since $\tau_{\sfrac{i}{n}}(f\oplus f)=\tau_{\lceil\sfrac{i}{2n}\rceil}(f)\in \lucas_1^S$, it follows from the definition of $E$ that $E(C, f \oplus f) \geq \frac{i}{n}$ if and only if  $H(C, \tau_{\lceil\sfrac{i}{2n}\rceil}(f))=1$. Using again the definition of $E$, the latter is equivalent to $E(C, f)\geq \lceil\frac{i}{2n}\rceil$, which is the same as $ E(C, f) \oplus E(C, f) \geq \frac{i}{n}$.
We can proceed in a similar way to prove that $E(C, f\odot f)=E(C,f)\odot E(C,f)$ for every $f\in \lucas_n^S$.

To prove $N$-maximality of $E$, consider $f\in \lucas_n^S$ and $i\in \{0, \ldots, n-1\}$. It follows from the definition of $E$ and $N$-maximality of $H$ that $E(N,f) \leq \frac{i}{n}$ if and only if  $H(\varnothing, \neg \tau_{\sfrac{(i+1)}{n}}(f))=1$. Since $ \neg \tau_{\sfrac{(i+1)}{n}}(f)= \tau_{\sfrac{(n-i)}{n}}(\neg f)$, the last identity is equivalent to $H(\varnothing, \tau_{\sfrac{(n-i)}{n}}(\neg f))=1$ and finally to $ \neg E(\varnothing, \neg f) \leq \frac{i}{n}$.

Assume that $H$ is truly playable. Definition \ref{def:BooleanPlay} yields existence of $g\in \lucas_1^S$ such that $H(\varnothing, -)^{-1}(1)=\{h \in \lucas_1^S \mid h \geq g\}$. Since $E$ is homogeneous, $E(\varnothing, -)^{-1}(1)=\{f \in \lucas_n^S \mid H(\varnothing, \tau_1(f))=1\}=\{f \in \lucas_n^S \mid \tau_1(f) \geq g\}$. The latter is equal to $\{f\in \lucas_n^S \mid f \geq g\}$ since $E(\varnothing, -)^{-1}(1)$ is an MV-filter of $\lucas_n^S$ and $g$ is idempotent.
\end{proof}

The following result, which is the $\lucas_n$-valued generalization of \cite[Theorem 3.2]{Pauly2002} and \cite[Theorem 1]{Goranko2013}, completes the characterization of truly playable $\lucas_n$-valued effectivity functions. 
\begin{theorem}\label{thm:TP}
An $\lucas_n$-valued effectivity function $E\colon \power N \times \lucas_n^S \to \lucas_n$ is truly playable if and only if there is a game form $G$ such that $E=E_G$.
\end{theorem}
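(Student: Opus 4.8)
The plan is to prove the two implications separately, with the forward direction being immediate and the converse reducing to the Boolean case. The implication asserting that $E = E_G$ forces true playability is exactly the content of Proposition \ref{prop:nxq}, so nothing remains to be done there.

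For the converse, suppose $E$ is truly playable. My strategy is to pass to the Boolean skeleton, invoke the classical reconstruction theorem, and then lift the resulting game form back up using the fact that homogeneous effectivity functions are rigidly determined by their skeletons. First I would apply Lemma \ref{lem:uip} to conclude that $\br{E}$ is a truly playable Boolean effectivity function. Next, Theorem \ref{thm:Goranko} of Goranko et al. yields a game form $G = (N, \{\Sigma_i \mid i \in N\}, S, o)$ with $\br{E} = H_G$. By Lemma \ref{lem:tri} we have $H_G = \br{E_G}$, and hence $\br{E} = \br{E_G}$. To finish, I would observe that both $E$ and $E_G$ are homogeneous: $E$ is homogeneous because it is playable, while $E_G$ is homogeneous by Proposition \ref{prop:nxq}. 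Lemma \ref{lem:vaz} then applies and gives $E = E_G$, as required.

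The genuinely substantive work happens inside the machinery being invoked rather than in the assembly above. The main obstacle in principle is the Boolean reconstruction encoded in Theorem \ref{thm:Goranko}, but since that result is available, the decisive conceptual point is the rigidity expressed by Lemma \ref{lem:vaz}: a homogeneous $\lucas_n$-valued effectivity function is fully recovered from its two-valued restriction, thanks to the definability of the threshold terms $\tau_{\sfrac{i}{n}}$ (Definition \ref{defi:termes}). This is precisely what lets the entire $\lucas_n$-valued problem collapse onto the Boolean one, so that no new reconstruction is needed at the many-valued level and Lemma \ref{lem:new} is not required here.
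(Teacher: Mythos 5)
Your proposal is correct and follows essentially the same route as the paper's own proof: Proposition \ref{prop:nxq} for the forward direction, then Lemma \ref{lem:uip}, Theorem \ref{thm:Goranko}, Lemma \ref{lem:tri}, and Lemma \ref{lem:vaz} in the same order for the converse. Your closing observations (that homogeneity of both $E$ and $E_G$ is what licenses Lemma \ref{lem:vaz}, and that Lemma \ref{lem:new} plays no role here) are accurate refinements of the paper's argument rather than deviations from it.
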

\begin{proof}
By Proposition \ref{prop:nxq}, $E_{G}$ is truly playable. Conversely, assume that $E$ is truly playable. By Lemma \ref{lem:uip} and Theorem \ref{thm:Goranko}, there exists a game form $G$ such that $H_G=\br{E}$, where $H_G$ is the Boolean effectivity function of $G$. We obtain by Lemma~\ref{lem:tri} that $\br{E}=H_G=\br{E_G}$, where $E_G$ is the $\lucas_n$-valued effectivity function associated with $G$. The conclusion $E=E_G$ follows from Lemma \ref{lem:vaz}.
\end{proof}

\begin{remark}\label{rem:MVef}
The previous theorem implies that the notions of playability for Boolean effectivity functions and $\lucas_n$-valued functions are equivalent on the game-theoretic level since any of those concepts leads to a uniquely determined game form. In our more general setting we were able to maintain the correspondence with game forms by imposing homogeneity and the $\lucas_n$-version of the playability axioms in the case of the $\lucas_n$-valued effectivity functions. Admittedly, we do not arrive at a new concept of game form or extend the validity of Boolean effectivity functions to a larger class of objects. Nevertheless, the importance of our approach presented herein lies in an alternative representation of the classical setting: allowing for a richer language and more truth degrees leaves the underlying class of game forms invariant. This situation is similar to the development of many-valued probability theory starting from \L ukasiewicz logic: while every probability of an MV-algebra induces a unique ``classical'' probability \cite{Kroupa:Archivy}, the more general concept of MV-probability forms a solid basis for studying a number of stochastic phenomena, betting games among them; cf. \cite[Chapter 1]{Mundici11}.
\end{remark}

It is useful to introduce a weaker notion of playability. To this end, we need this preparatory result.
\begin{lemma}\label{lem:playmonreg}
	If $E$ is  a playable $\lucas_n$-valued effectivity function, then $E$ is coalition monotonic and regular.
\end{lemma}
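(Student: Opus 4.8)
The plan is to derive both coalition monotonicity and regularity from the playability axioms, exploiting the Boolean liveness/safety properties together with superadditivity and $N$-maximality. I would treat the two conclusions separately but use a common strategic idea: introduce a trivial coalition (namely $\varnothing$ or a disjoint complement) into a superadditivity inequality and collapse one of the factors to $1$ using liveness.

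First I would prove coalition monotonicity. Fix $C\subseteq C'$ and $f\in\lucas_n^S$. The natural move is to write $C'=C\cup(C'\setminus C)$, which is a disjoint union, and apply superadditivity \eqref{it:opi04} with the two coalitions $C$ and $C'\setminus C$. To make this useful I want the second factor to contribute a $1$, so I would take the second function to be the constant $1$: then $E(C,f)\wedge E(C'\setminus C,1)\leq E(C',f\wedge 1)=E(C',f)$. By liveness \eqref{it:opi012bis} we have $E(C'\setminus C,1)=1$, so the left-hand side is just $E(C,f)$, giving $E(C,f)\leq E(C',f)$ as required. (One should note $f\wedge 1=f$ since all values lie in $\lucas_n$.)

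Next I would establish regularity, i.e. $E(C,f)\leq\neg E(\overline C,\neg f)$. The key observation is that $C$ and $\overline C$ are disjoint with $C\cup\overline C=N$, so superadditivity applied to $f$ and $\neg f$ yields $E(C,f)\wedge E(\overline C,\neg f)\leq E(N,f\wedge\neg f)$. Since $f\wedge\neg f\leq\tfrac12\cdot\mathbf 1$ pointwise, I want to bound $E(N,f\wedge\neg f)$ from above; outcome monotonicity \eqref{it:opi01} reduces this to controlling $E(N,c)$ for the relevant constant $c$, and here $N$-maximality \eqref{it:opi03} together with safety \eqref{it:opi012ter} should pin down that the "forbidden region'' forces $E(C,f)\odot E(\overline C,\neg f)=0$, which rearranges to the regularity inequality via the \L ukasiewicz residuation $a\odot b=0\iff a\leq\neg b$. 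The cleanest route is probably to show directly that $E(C,f)\odot E(\overline C,\neg f)\leq E(N,f\odot\neg f)$ using a strong (i.e. $\odot$-based) form of superadditivity if available, and then that $E(N,f\odot\neg f)=0$; however, the stated axiom \eqref{it:opi04} is phrased with $\wedge$, so I would instead combine it with homogeneity to descend to idempotent slices $\tau_{\sfrac in}(f)$ and argue Booleanly on $\br E$.

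The main obstacle I anticipate is regularity rather than coalition monotonicity: the superadditivity axiom is stated in terms of the lattice meet $\wedge$, whereas regularity is fundamentally an $\odot$/$\neg$ statement, and these do not match up directly in \L ukasiewicz logic. The likely resolution is to reduce to the Boolean skeleton via Lemma \ref{lem:vaz} and homogeneity: for each threshold $\tfrac in$ one checks the inequality $E(C,f)\leq\neg E(\overline C,\neg f)$ on the idempotent companions $\tau_{\sfrac in}(f)$, where meet and strong conjunction coincide and where the Boolean version of regularity follows from disjoint superadditivity plus liveness/safety applied to $\br E$. Assembling the threshold-wise inequalities back into the $\lucas_n$-valued statement is then routine given that $\tau_{\sfrac in}(\neg f)=\neg\tau_{\sfrac{(n-i+1)}{n}}(f)$ and the characterization of order by the $\tau_{\sfrac in}$ used in the proof of Lemma \ref{lem:vaz}.
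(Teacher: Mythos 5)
Your proposal is correct and takes essentially the same route as the paper: coalition monotonicity via superadditivity applied to $C$ and $C'\setminus C$ with the constant function $1$ and liveness, and regularity via homogeneity descent to the idempotent slices $\tau_{\sfrac{i}{n}}(f)$, where superadditivity and safety force the contradiction $E(N,0)=1$. The paper's argument is exactly your threshold plan phrased as a proof by contradiction (taking $j=n-i+1$ so that $\tau_{\sfrac{j}{n}}(\neg f)=\neg\tau_{\sfrac{i}{n}}(f)$), so the obstacle you anticipated is resolved precisely as you suggest; note only that safety alone suffices there (liveness and $N$-maximality are not needed for regularity).
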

\begin{proof}
	We proceed by contradiction to prove that $E$ is regular. Assume that there are $C\in \power N$,  $f\in \lucas_n^S$ and $i \in \{1, \ldots, n\}$ such that
	$
	\neg E(\overline{C}, \neg f) < \frac{i}{n} \leq E(C,f).
	$ Put $j=1-\frac{i-1}{n}$. Since $E$ is homogeneous, it follows that 
	$E(C,\tau_{\sfrac{i}{n}}(f))=1$ and $E(\overline{C}, \tau_{\sfrac{j}{n}}(\neg f))=1$. Thus we obtain by superadditivity 
	$E(N, \tau_{\sfrac{i}{n}}(f) \wedge \tau_{\sfrac{j}{n}}(\neg f))=1,$
	which contradicts safety  since $\tau_{\sfrac{i}{n}}(f) \wedge \tau_{\sfrac{j}{n}}(\neg f)=0$.
	
	We prove that $E$ is coalition monotonic. Let $C\subseteq C'  \in \power N$ and $f \in \lucas_n^S$. By applying superadditivity to $C_1=C$ and $C_2=C'\setminus C$ we obtain $E(C,f)\leq E(C',f)$, which is the desired result.
\end{proof}

\begin{definition}\label{defn:semi}
An $\lucas_n$-valued effectivity function $E\colon \power N \times \lucas_n^{S} \to \lucas_n$ is \emph{semi-playable} if $E(C,f)\leq E(C,g)$ for every $f\leq g \in \lucas_n^S$ and every coalition $C\neq N$, if $E$ has  liveness and  safety for coalitions $C\neq N$, and if it satisfies superadditivity for coalitions $C_1$ and $C_2$ such that $C_1 \cap C_2=\varnothing$ and $C_1 \cup C_2\neq N$.
\end{definition}

\begin{proposition}\label{prop:laz}
An  $\lucas_n$-valued effectivity function $E\colon \power N \times \lucas_n^{S} \to \lucas_n$ is playable if and only if it is semi-playable, homogeneous, regular and $N$-maximal.
\end{proposition}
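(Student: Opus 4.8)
The plan is to prove the two implications separately, the forward one being essentially bookkeeping and the backward one requiring that the ``semi'' conditions be extended to the coalition $N$. For the forward implication, suppose $E$ is playable. Then $E$ is homogeneous and $N$-maximal by definition, and it is regular by Lemma \ref{lem:playmonreg}. Semi-playability is immediate, since each of its clauses is simply a restriction of outcome monotonicity, liveness, safety, or superadditivity to coalitions (or unions of coalitions) distinct from $N$, and all of these hold for a playable function.

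For the backward implication, assume $E$ is semi-playable, homogeneous, regular and $N$-maximal. Since $|N|\geq 2$, the coalition $\varnothing$ is distinct from $N$, so the semi-playability clauses apply to it. The first step is to record the identity
\[
E(N,f)=\neg E(\varnothing,\neg f)\qquad (f\in\lucas_n^S),
\]
which follows by combining $N$-maximality (giving $\neg E(\varnothing,\neg f)\leq E(N,f)$) with regularity applied to $C=N$ (giving $E(N,f)\leq\neg E(\overline{N},\neg f)=\neg E(\varnothing,\neg f)$). From this identity the missing instances of the playability conditions at $N$ follow at once: outcome monotonicity at $N$ reduces, via $f\leq g\Rightarrow\neg f\geq\neg g$, to outcome monotonicity at $\varnothing$; liveness at $N$ reads $E(N,1)=\neg E(\varnothing,0)=1$ by safety at $\varnothing$; and safety at $N$ reads $E(N,0)=\neg E(\varnothing,1)=0$ by liveness at $\varnothing$. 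Together with the corresponding clauses of semi-playability (for $C\neq N$), this yields outcome monotonicity, liveness and safety for all coalitions.

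The main obstacle is to upgrade superadditivity, since semi-playability provides it only when $C_1\cup C_2\neq N$; the remaining case is $C_1\cap C_2=\varnothing$ with $C_2=\overline{C_1}$. I would first reduce to idempotent arguments exactly as in the proof of Lemma \ref{lem:vaz}: by homogeneity $E(C,f)\geq\tfrac in$ iff $E(C,\tau_{\sfrac in}(f))=1$, and $\tau_{\sfrac in}$ commutes with $\wedge$, so it suffices to show, for idempotent $F,G\in\lucas_1^S$, that $E(C_1,F)=E(\overline{C_1},G)=1$ implies $E(N,F\wedge G)=1$. Since one of $C_1,\overline{C_1}$ is proper, relabel so that $C_1\neq N$ (the statement is symmetric in $(C_1,F)$ and $(C_2,G)$). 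Using the identity above, the goal becomes $E(\varnothing,\neg F\vee\neg G)=0$, a value which is Boolean because $\neg F\vee\neg G$ is idempotent.

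Suppose for contradiction that $E(\varnothing,\neg F\vee\neg G)=1$. Then superadditivity for the pair $\varnothing,C_1$ (legitimate since $\varnothing\cup C_1=C_1\neq N$), together with $E(C_1,F)=1$, yields
\[
1=E(\varnothing,\neg F\vee\neg G)\wedge E(C_1,F)\leq E\bigl(C_1,(\neg F\vee\neg G)\wedge F\bigr)=E(C_1,\neg G\wedge F)\leq E(C_1,\neg G),
\]
where the last step uses outcome monotonicity at $C_1$. But regularity applied to $E(\overline{C_1},G)=1$ forces $E(C_1,\neg G)=0$, a contradiction. Hence $E(\varnothing,\neg F\vee\neg G)=0$, i.e.\ $E(N,F\wedge G)=1$, which completes superadditivity and therefore the proof. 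I expect the reduction to idempotents together with this contradiction step to be the delicate point, as it is the only place where regularity and $N$-maximality must be fed into the otherwise ``local'' superadditivity of a semi-playable function.
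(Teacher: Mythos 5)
Your proof is correct and takes essentially the same route as the paper's: the forward direction via Lemma \ref{lem:playmonreg}, and for the backward direction the identical contradiction argument for the missing complementary case of superadditivity — reduce to idempotents by homogeneity, apply semi-superadditivity to the pair $\varnothing$, $C_1$, simplify with the Boolean identity $F\wedge\neg(F\wedge G)=F\wedge\neg G$, use outcome monotonicity at $C_1$, and contradict regularity — followed by the same derivation of monotonicity, liveness and safety at $N$ from $N$-maximality, regularity and the $\varnothing$-clauses. Your explicit identity $E(N,f)=\neg E(\varnothing,\neg f)$ merely repackages the paper's separate uses of $N$-maximality and regularity, so the two proofs differ only in presentation.
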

\begin{proof}
The first implication follows from Lemma \ref{lem:playmonreg}. Conversely, assume that $E$ is semi-playable, homogeneous, regular and $N$-maximal. First we prove superadditivity. Let $C\in \power N$ with $C\neq N$. We have to verify that
$E(C,f)\wedge E(\overline{C},g)\leq E(N, f\wedge g).$
By way of contradiction, assume that there is $i\in\{1, \ldots, n\}$ such that
$E(N, f\wedge g) < \frac{i}{n} \leq E(C,f)\wedge E(\overline{C},g)$.
Since $E$ is homogeneous, we obtain $E(C, \tau_{i/n}(f))=1=E(\overline{C}, \tau_{i/n}(g))$, while, by $N$-maximality, $E(\varnothing, \neg\tau_{i/n}(f\wedge g))=1$.
It follows from superadditivity that $E(C, \tau_{i/n}(f) \wedge \neg \tau_{i/n}(f\wedge g))=1$, which is equivalent to  $E(C, \tau_{i/n}(f) \wedge \neg (\tau_{i/n}(f)\wedge \tau_{i/n}(g)))=1$,
since $\tau_{i/n}(f\wedge g)=\tau_{i/n}(f)\wedge \tau_{i/n}(g)$. From  the fact that $\tau_{i/n}(f)$ and $\tau_{i/n}(g)$ belong to the Boolean skeleton of $\lucas_n^S$ we deduce  
$
E(C, \tau_{i/n}(f) \wedge \neg \tau_{i/n}(g))=1.
$
We conclude that $E(C, \neg \tau_{i/n}(g))=1$ by outcome monotonicity  and finally that $E(\overline{C},  \tau_{i/n}(g))=0$ by regularity, which is the desired contradiction.

It is easy to check that the liveness and safety conditions are satisfied for $C=N$. Moreover, $E$ is $N$-maximal and homogeneous by assumption. It remains to prove that $E$ is outcome monotonic. If $f\leq g \in \lucas_n^S$, we obtain successively 
\[
E(N,f)\leq \neg E(\varnothing, \neg f)\leq \neg E(\varnothing, \neg g)\leq E(N,g)
\]
where the first inequality is obtained by regularity, the second by monotonicity and the third by $N$-maximality.
\end{proof}

\section{$\lucas_n$-valued modal language and semantics for effectivity functions}\label{sec:coal}
In this section we build a many-valued modal logic in the spirit of \cite{Pauly2002, Goranko2013} that captures the properties of (truly) playable $\lucas_n$-valued effectivity functions.

\subsection{Neighborhood semantics for playable $\lucas_n$-valued effectivity functions}\label{sec:neigh}
Let $\lang$ be the language  $\{\iimplies, \neg, 1\}\cup\{[C] \mid C \in \power N\}$ where $\iimplies$, $\neg$, $1$ are binary, unary and constant, respectively, and $[C]$ is a unary modality for every $C \in \power N$. The~set $\Form_\lang$  of formulas is defined inductively from the  countably infinite set $\Prop$ of propositional variables by the following rules:
\[
\phi::= 1\ \vert \ p \ \vert \ \phi \iimplies \phi \ \vert \ \neg \phi \ \vert \ [C]\phi \ 
\]
where $p \in \Prop$ and $C \in \power N$. We use $0$ as an abbreviation of $\neg 1$. The intended reading of the formula $[C]\phi$ is  `coalition $C$ can enforce $\phi$ '. In the language $\lang$ we also use the standard abbreviations for defined connectives in \L ukasiewicz logic; see Appendix \ref{appendix:MV}.

We introduce a semantics for $\lang$ which is based on a class of action models called \emph{$\lucas_{n}$-frames}. Such frames are $\lucas_{n}$-valued extensions of coalition frames introduced  in \cite{Pauly2002}. The coalition frames are a very general model of interaction in which an effectivity function is associated with each outcome state in $S$. Under the assumption of true playability, this is equivalent to specifying a game form for every outcome state in $S$.
\begin{definition}
An \emph{$\lucas_n$-frame} is a tuple $\framme{F}=(S,E)$, where $S$ is a non-empty set of outcome states and $E$ is a mapping sending each outcome state $u\in S$ to an~$\lucas_{n}$-valued effectivity function $E(u)\colon \power N \times \lucas_n^{S} \to \lucas_n$.
A tuple  $\model{M}=(\framme{F},\Val)$ is an~\emph{$\lucas_n$-model (based on $\framme{F}$)} if $\framme{F}=(S,E)$ is an $\lucas_n$-frame and $\Val\colon S\times \Prop \to \lucas_n$. 
\end{definition}

We use the \L ukasiewicz interpretations of the connectives $\neg, \iimplies, 1$ in $\lucas_{n}$; see Appendix \ref{appendix:MV}. For every $\lucas_n$-model $\model{M}$, the valuation map $\Val$ is extended inductively to $S\times\Form_\lang$ by setting
\begin{gather}
\label{eqn:val01}\Val(u, \phi \iimplies \psi) =\Val(u, \phi)\iimplies \Val(u, \psi),\\ 
\label{eqn:val02}\Val(u, \neg \psi) =\neg\Val(u,\psi),\\
\label{eqn:val03}\Val(u,1) =1,\\
\label{eqn:val}\Val(u, [C]\phi)=E(u)\big(C,\Val(-, \phi)\big),
\end{gather}
for every $C \in \power N$ and every $\phi,\psi \in \Form_\lang$. We use the standard notation and terminology. We say that a formula $\phi$ is \emph{true in $\model{M}=(\framme{F}, \Val)$} and write $\model{M}\models \phi $ if $\Val(u, \phi)=1$ for every  $u \in S$. A formula $\phi$ is \emph{valid} in an $\lucas_n$-frame $\framme{F}$ if it is true in every $\lucas_n$-valued  model based on $\framme{F}$.

In order to proceed further, we need to generalize  the technique of filtration \cite[Chapter 7.5]{Chellas1980} for neighborhood models.

\begin{definition}\label{defn:filt}
Let $\model{M}=(S,E, \Val)$ be an $\lucas_n$-valued   model and $\Gamma$ be a set of formulas closed under subformulas and the unary connectives $\tau_\oplus\colon \phi \mapsto \phi \oplus \phi$ and $\tau_\odot\colon \phi \mapsto \phi \odot \phi$. Consider the equivalence relation $\equiv_\Gamma$  defined on $S$ by
\[
u \equiv_\Gamma v \quad \text{if} \quad \forall \phi \in \Gamma \ \Val(u, \phi)=\Val(v,\phi).
\]
We denote by $|S|$ the set of equivalence classes $|u|$ for $\equiv_\Gamma$. An $\lucas_n$-model $\model{M}^*=(|S|, E^*, \Val^*)$ is a $\Gamma$-\emph{filtration} of $\model{M}$ if the following conditions are satisfied for every $u\in S$:
\begin{enumerate}
\item\label{it:filt01} $\Val^*(|u|,p)=\Val(u,p)$ for every $p \in \Prop \cap \Gamma$,
\item\label{it:filt02} $E(u)(C,\Val(-, \phi))=E^*(|u|)(C,|\Val(-, \phi)|)$ for every $C\in \power N$ and $\phi\in \Gamma$,
\end{enumerate} 
where the map $|\Val(-, \phi)| \colon |S| \to \lucas_n$ is defined by $|\Val(|u|, \phi)|=\Val(u,\phi)$.
\end{definition}

\begin{lemma}\label{lem:filt}
Let $\model{M}=(S,E, \Val)$ be an $\lucas_n$-valued  model and $\Gamma$ be a set of formulas closed under subformulas and the  connectives $\tau_\oplus$ and $\tau_\odot$. If $\model{M}^*=(|S|, E^*, \Val^*)$ is a $\Gamma$-filtration of $\model{M}$, then 
\begin{equation}\label{eqn:filt}
\Val(u, \phi)=\Val^*(|u|, \phi)
\end{equation}
 for every $\phi \in \Gamma$ and every $u\in S$. 
\end{lemma}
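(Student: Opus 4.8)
The plan is to prove the Filtration Lemma by induction on the structure of the formula $\phi \in \Gamma$. Since $\Gamma$ is closed under subformulas, every subformula of a formula in $\Gamma$ is again in $\Gamma$, so the inductive hypothesis will always be available for the immediate subformulas. The statement to prove is $\Val(u,\phi)=\Val^*(|u|,\phi)$ for all $u \in S$ and all $\phi \in \Gamma$.

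For the base cases, I would handle the constant $1$ and the propositional variables $p$. The case $\phi = 1$ is immediate from \eqref{eqn:val03}, since both sides equal $1$. For $\phi = p \in \Prop \cap \Gamma$, the equality $\Val^*(|u|,p)=\Val(u,p)$ is exactly condition \eqref{it:filt01} in the definition of a $\Gamma$-filtration. For the propositional connectives $\iimplies$ and $\neg$, the argument is routine: if $\phi = \psi \iimplies \chi$ with $\psi,\chi \in \Gamma$, then by \eqref{eqn:val01} and the induction hypothesis applied to $\psi$ and $\chi$ we get
\[
\Val^*(|u|,\psi\iimplies\chi)=\Val^*(|u|,\psi)\iimplies\Val^*(|u|,\chi)=\Val(u,\psi)\iimplies\Val(u,\chi)=\Val(u,\psi\iimplies\chi),
\]
and similarly for $\neg$ using \eqref{eqn:val02}. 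These steps only use that the connectives are interpreted pointwise on both models.

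The crucial case is the modal one, $\phi = [C]\psi$ with $\psi \in \Gamma$. Here I would first observe that $|\Val(-,\psi)|$ is well-defined on equivalence classes: if $u \equiv_\Gamma v$, then $\Val(u,\psi)=\Val(v,\psi)$ because $\psi \in \Gamma$, so the prescription $|\Val(-,\psi)|(|u|)=\Val(u,\psi)$ does not depend on the chosen representative. Then, by the semantic clause \eqref{eqn:val} applied in $\model M$ and $\model M^*$, I must compare $E(u)(C,\Val(-,\psi))$ with $E^*(|u|)(C,|\Val(-,\psi)|)$. Condition \eqref{it:filt02} of the filtration definition gives precisely $E(u)(C,\Val(-,\psi))=E^*(|u|)(C,|\Val(-,\psi)|)$ for every $C \in \power N$ and $\psi \in \Gamma$, so $\Val(u,[C]\psi)=\Val^*(|u|,[C]\psi)$ follows.

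The main subtlety to make the modal step genuinely correct is that the function $|\Val(-,\psi)|$ appearing in \eqref{it:filt02} must coincide with $\Val^*(-,\psi)$ as functions on $|S|$, so that the right-hand side of \eqref{it:filt02} really computes $\Val^*(|u|,[C]\psi)$ via \eqref{eqn:val}. This identification is exactly what the induction hypothesis on $\psi$ delivers: $\Val^*(|u|,\psi)=\Val(u,\psi)=|\Val(-,\psi)|(|u|)$ for every $u$, whence $\Val^*(-,\psi)=|\Val(-,\psi)|$ as maps $|S|\to\lucas_n$. I expect this interplay between the induction hypothesis and the bridging condition \eqref{it:filt02} to be the only place requiring care; once it is in place, the modal case closes immediately and the induction is complete.
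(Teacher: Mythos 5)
Your proof is correct and follows essentially the same route as the paper's: a structural induction whose only nontrivial case is $\phi=[C]\psi$, closed by combining the induction hypothesis (giving $\Val^*(-,\psi)=|\Val(-,\psi)|$) with condition (\ref{it:filt02}) of Definition \ref{defn:filt}. The subtlety you single out is exactly the step the paper's proof makes in the chain $\Val^*(|u|,[C]\psi)=E^*(|u|)(C,\Val^*(-,\psi))=E^*(|u|)(C,|\Val(-,\psi)|)=E(u)(C,\Val(-,\psi))=\Val(u,[C]\psi)$.
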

\begin{proof}
Note that identity \eqref{eqn:filt} is equivalent to $\Val^*(-,\phi)=|\Val(-,\phi)|$.
The proof is a standard  induction argument on the length of $\phi\in \Gamma$. We  consider only the case where $\phi=[C]\psi \in \Gamma$ for $C \in \power N$. By the definition of $\Val^*$ and the induction hypothesis, we obtain \[\Val^*(|u|, [C]\psi)  = E^*(|u|)(C, \Val^*(-, \psi))=E^*(|u|)(C, |\Val(-,\psi)|).\] It follows from Definition \ref{defn:filt}(\ref{it:filt02}) that \[E^*(|u|)(C, |\Val(-,\psi)|)=E(u)(C,\Val(-,\psi))=\Val(u, [C]\psi).\qedhere\] 
\end{proof}

In what follows we focus on the relations between the language $\lang$ and the $\lucas_n$-frames in which the effectivity functions are (truly) playable.

\begin{definition}
An $\lucas_n$-frame $\framme{F}=(S,E)$ is said to be \emph{(truly) playable} if $E(u)$ is \emph{(truly) playable} for every $u\in S$. An $\lucas_n$-model $\model{M}$ is \emph{(truly) playable} if it is based on a (truly) playable $\lucas_n$-frame.
\end{definition}

Our first aim is to prove that, similarly as in the Boolean case \cite{Goranko2013}, there is no set of $\lang$-formulas that can define truly playable $\lucas_n$-frames inside the class of playable $\lucas_n$-frames. To this end, we show that for any playable $\lucas_n$-model $\model{M}$ and any formula $\phi$, there is a finite playable $\lucas_n$-model $\model{M}_\phi$ such that $\model{M}\models \phi$ if and only if $\model{M}_\phi \models \phi$. We use this property and the fact that finite  playable $\lucas_n$-models are truly playable to prove Proposition \ref{prop:imp}. The construction of $\model{M}_\phi$ is based on a refinement of filtration for playable $\lucas_n$-valued  models. We proceed in two steps. The next definition constitutes the first step in this direction.

\begin{definition}\label{defn:int}
Using the notation of Definition \ref{defn:filt}, an $\lucas_n$-valued  model $\model{M}^*=(|S|, E^*, \Val^*)$ is an \emph{intermediate $\Gamma$-filtration of} a playable $\lucas_n$-model $\model{M}=(S, E, \Val)$ if $\model{M}^*$ is a $\Gamma$-filtration of $\model{M}$ that satisfies 
\begin{gather}
E^*(|u|)(C,f)=\max\{E(u)(C,\Val(-,\phi))\mid \phi \in \Gamma \text{ and } |\Val(-,\phi)|\leq f\},\label{eqn:int01}\\
E^*(|u|)(N,f)=\neg E^*(|u|)(\varnothing, \neg f),\label{eqn:int02}
\end{gather}
for every proper coalition $C\in \power N$ and every $f\in  \lucas_n^{|S|}$.
\end{definition}

Observe that since we have assumed playability ($N$-maximality, in particular) of $\model{M}$ in Definition \ref{defn:int}, an intermediate $\Gamma$-filtration of $\model{M}$ is indeed a $\Gamma$-filtration in the sense of Definition \ref{defn:filt}.

For every formula $\mu$ we denote by $\Cl(\mu)$ the closure of the set of subformulas of $\mu$ for the connectives $\neg$ and $\iimplies$.  The next lemma shows that an intermediate $\Cl(\mu)$-filtration is an intermediate step in the construction of a playable $\Cl(\mu)$-filtration of a playable $\lucas_n$-model. Recall that by $\br{E(u)}$ we denote the function that is the restriction of the $\lucas_{n}$-valued effectivity function $E(u)\colon \power N\times \lucas_{n}^{S}\to \lucas_{n}$ to the domain $\power N\times \lucas_{1}^{S}$.

\begin{lemma} \label{lem:res}
Let $\mu\in \Form_\lang$ and $\model{M}=(S,E, \Val)$ be a playable $\lucas_n$-model. If $\model{M}^*=(|S|, E^*, \Val^*)$ is an intermediate $\Cl(\mu)$-filtration of $\model{M}$, then the Boolean effectivity function $E^{*}(|u|)^\sharp\colon\power N \times \lucas_1^S \to \lucas_1$ is playable for every $u\in S$.
\end{lemma}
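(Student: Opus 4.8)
The plan is to fix $u \in S$, abbreviate $H := E^*(|u|)^\sharp$, and verify directly that $H$ is a playable Boolean effectivity function in the sense of Definition \ref{def:BooleanPlay}. Two preliminary observations drive everything. First, since $\oplus$, $\odot$, $\wedge$ and $\vee$ are all definable from $\iimplies$ and $\neg$, the set $\Cl(\mu)$ is closed under the connectives $\tau_\oplus$, $\tau_\odot$, $\wedge$, $\vee$, and hence under each term $\tau_{\sfrac{1}{n}}$ (Definition \ref{defi:termes}). Second, since $\model{M}$ is playable, $E(u)$ is a playable $\lucas_n$-valued effectivity function, so by Lemma \ref{lem:uip} its restriction $\br{E(u)}$ is a playable Boolean effectivity function; in particular $E(u)$ is homogeneous and $\br{E(u)}$ is $\{0,1\}$-valued.

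The key reduction is that in \eqref{eqn:int01} the maximum is always attained at an idempotent-valued formula. Indeed, if $C$ is a proper coalition, $f$ is idempotent, and $\phi \in \Cl(\mu)$ satisfies $|\Val(-,\phi)| \le f$, then $\tau_{\sfrac{1}{n}}(\phi) \in \Cl(\mu)$, its valuation $\tau_{\sfrac{1}{n}}(\Val(-,\phi))$ is idempotent and dominates $\Val(-,\phi)$, and $|\Val(-,\tau_{\sfrac{1}{n}}(\phi))| = \tau_{\sfrac{1}{n}}(|\Val(-,\phi)|) \le \tau_{\sfrac{1}{n}}(f) = f$; outcome monotonicity of $E(u)$ then shows that replacing $\phi$ by $\tau_{\sfrac{1}{n}}(\phi)$ does not decrease the value. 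Consequently $E^*(|u|)(C,f) = \br{E(u)}(C, \Val(-,\tau_{\sfrac{1}{n}}(\phi)))$ for an optimal $\phi$, which lies in $\{0,1\}$; together with \eqref{eqn:int02} for $C = N$ this shows $H$ is a genuine Boolean effectivity function. I record for later use that whenever $H(C,f) = 1$ for a proper coalition $C$ and idempotent $f$, there is an idempotent $F \in \lucas_1^S$ of the form $\Val(-,\phi)$, $\phi \in \Cl(\mu)$, with $|F| \le f$ and $\br{E(u)}(C,F) = 1$.

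The conditions of outcome monotonicity, liveness, safety and $N$-maximality are then routine. Outcome monotonicity for proper $C$ is immediate since enlarging $f$ enlarges the feasible set in \eqref{eqn:int01}, and for $C = N$ it follows from \eqref{eqn:int02} together with the antitonicity of $\neg$; liveness and safety for proper $C$ follow from the corresponding properties of $E(u)$ (noting that the constant functions $1$ and $0$ are valuations of $\phi_0 \iimplies \phi_0$ and its negation, and that $|\Val(-,\phi)| \le 0$ forces $\Val(-,\phi)=0$), and for $C=N$ from \eqref{eqn:int02}; finally $N$-maximality is exactly the defining identity \eqref{eqn:int02}.

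The crux is superadditivity, and I would split it according to $C_1 \cup C_2$. If $C_1 \cap C_2 = \emptyset$, $C_1 \cup C_2 \ne N$, and $H(C_1,f) = H(C_2,g) = 1$, take idempotent witnesses $F, G$ as above; then $F \wedge G = \Val(-,\chi)$ for some $\chi \in \Cl(\mu)$ with $|F \wedge G| \le f \wedge g$, and superadditivity of $E(u)$ gives $E(u)(C_1 \cup C_2, F \wedge G) = 1$, so $\chi$ witnesses $H(C_1 \cup C_2, f \wedge g) = 1$ through \eqref{eqn:int01}. The hard case, which is the main obstacle, is $C_1 \cup C_2 = N$, where $H(N,-)$ is governed by \eqref{eqn:int02} rather than by the maximum \eqref{eqn:int01}, so superadditivity of $E(u)$ cannot be applied directly. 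Here I would argue by contradiction: assuming $H(N, f\wedge g) = 0$, i.e. $E^*(|u|)(\emptyset, \neg(f\wedge g)) = 1$, yields an idempotent witness $T$ with $|T| \le \neg(f\wedge g)$ and $\br{E(u)}(\emptyset, T) = 1$. For $C_2 = \overline{C_1}$ proper, Boolean superadditivity of $\br{E(u)}$ first gives $\br{E(u)}(N, F\wedge G) = 1$ and then $\br{E(u)}(N, F \wedge G \wedge T) = 1$; but the pointwise supports force $F \wedge G \wedge T = 0$ (wherever $F$ and $G$ are $1$ one has $f \wedge g = 1$, hence $T = 0$), contradicting safety of $\br{E(u)}$. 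The boundary subcase $\{C_1,C_2\} = \{N, \emptyset\}$ is analogous: combining $T$ with the idempotent witness for $H(\emptyset, g)=1$ via Boolean superadditivity produces, after the same support computation, a formula witnessing $E^*(|u|)(\emptyset, \neg f) = 1$, which contradicts $H(N,f) = 1$. This reduction of the $C_1 \cup C_2 = N$ cases to the safety of $\br{E(u)}$ is where the argument really has to work.
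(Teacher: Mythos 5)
Your proof is correct, but it closes the argument by a different mechanism than the paper does. You share the paper's two main ingredients: the observation that in \eqref{eqn:int01} the maximum over Boolean arguments may be restricted to idempotent-valued witnesses (the paper uses $n.\phi=\bigoplus_{i=1}^n\phi$ where you use $\tau_{\sfrac{1}{n}}(\phi)$ --- the same formula up to equivalence), and the routine verifications of monotonicity, liveness, safety for proper coalitions, $N$-maximality via \eqref{eqn:int02}, and superadditivity when $C_1\cup C_2\neq N$. Where you diverge is the endgame: the paper never proves full Boolean superadditivity directly; instead it proves \emph{regularity} of $E^*(|u|)^\sharp$ (by contradiction, combining a witness for $f$ and a witness for $\neg f$ via superadditivity of $E(u)$ to force $E^*(|u|)(N,0)=1$) and then concludes playability from semi-playability, regularity and $N$-maximality through Proposition \ref{prop:laz}, whose whole purpose is to avoid the case $C_1\cup C_2=N$. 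You instead attack that case head-on, splitting it into the two subcases $C_2=\overline{C_1}$ proper and $\{C_1,C_2\}=\{N,\varnothing\}$, and in each one combining the idempotent witnesses (including one for $H(\varnothing,\neg(f\wedge g))=1$ obtained from the contradiction hypothesis via \eqref{eqn:int02}) by superadditivity of $\br{E(u)}$ until the supports collapse and safety of $\br{E(u)}$ is violated. The two "hard" arguments are structurally parallel --- both manufacture effectivity for the constant $0$ and contradict safety/liveness --- but yours works entirely at the level of the Boolean skeleton $\br{E(u)}$ (via Lemma \ref{lem:uip}) and is self-contained, never invoking Proposition \ref{prop:laz} nor needing to pass back through the filtration conditions as the paper's regularity step does; the price is the extra case analysis, which is exactly what the paper's appeal to Proposition \ref{prop:laz} is designed to amortize.
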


\begin{proof}
By $n.\phi$ we denote the formula $\bigoplus_{i=1}^{n}\phi$. First, observe that if $u\in S$,  $f\in \lucas_1^{|S|}$ and $C\neq N$ is a coalition, then
\[
E^*(|u|)(C,f)=\max\{E(u)^\sharp(C,\Val(-,n.\phi)) \mid \phi \in \Gamma \text{ and } |\Val(-,n.\phi)|\leq f\},
\]
which shows that $E^{*}(|u|)^\sharp$ is a Boolean effectivity function. We prove that $E^{*}(|u|)^\sharp$ is regular and semi-playable (see Definition \ref{defn:semi}). It is straightforward to show that $E^{*}(|u|)^\sharp(C,-)$ is monotonic and has liveness and  safety for every coalition $C\neq N$. Moreover, $N$-maximality holds for $E^{*}(|u|)^\sharp$ according to \eqref{eqn:int02}.

Let us prove superadditivity for coalitions $C, D$ such that $C\cap D=\varnothing$ and $C\cup D \neq N$. If $\smash{f,g\in \lucas_1^{|S|}}$, then  $E^{*}(|u|)(C, f) \wedge E^*(|u|)(D,g)$ is  by definition equal to the maximum of the values
$E(u)(C, \Val(-,\psi)) \wedge E(u)(D, \Val(-,\rho))$, where $\psi$ and $\rho$ run through the elements of  $\Cl (\mu)$ satisfying $ |\Val(-,\psi)|\leq f$ and $|\Val(-,\rho)|\leq g$. By superadditivity of $E$ we have \[E(u)(C, \Val(-,\psi)) \wedge E(u)(D, \Val(-,\rho)) \leq E(u)(C\cup D, \Val(-,\psi \wedge \rho)),\] for every formula $\psi$ and $\rho$. Thus it follows from the definition of $E^*$ that
\[E^{*}(|u|)(C, f) \wedge E^*(|u|)(D,g)\leq E^*(|u|)(C\cup D,f\wedge g),\]
which is the desired result.

It remains to prove that for every $C\in \power N$ and every $f\in \lucas_1^{|S|}$ such that $E^{*}(|u|)(C,f)=1$, we have $E^*(|u|)(\overline{C},\neg f)=0$. By condition  \eqref{eqn:int02}, we may assume that $C\neq N$. Suppose for the sake of contradiction that  $E^*(|u|)(\overline{C},\neg f)=1$. By \eqref{eqn:int01} this means that there are some $\psi, \rho \in \Cl(\mu)$ such that $|\Val(-,\psi)|\leq f$ and $|\Val(-,\rho)|\leq \neg f$, and $E(u)(C,\Val(-,\psi))=E(u)(\overline{C},\Val(-,\rho))=1$. By superadditivity of $E$, we obtain $E(N, \Val(-, \psi \wedge \rho))=1$ with $\psi \wedge \rho \in \Cl(\mu)$ satisfying $|\Val(-,\psi \wedge \rho)| \leq f \wedge \neg f$. By \eqref{eqn:int01}, \eqref{eqn:int02}, Definition \ref{defn:filt} (\ref{it:filt02}), $N$-maximality of $E$  and the fact that $\smash{f\in \lucas_1^{|S|}}$, we obtain $E^*(|u|)(N, 0)=1$. This is a contradiction since $E^*(|u|)(N, 0)=\neg E^*(|u|)(\varnothing, 1)=0$ by \eqref{eqn:int02} and liveness of $E^*$ for the empty coalition.
\end{proof}

We combine Lemma \ref{lem:res} together with Lemma \ref{lem:new} to construct $\Cl(\mu)$-filtrations that preserve playability.

\begin{proposition}\label{prop:filt}
If $\mu\in \Form_\lang$  and $\model{M}=(S,E, \Val)$ is a playable $\lucas_n$-model, then there is a playable $\Cl(\mu)$-filtration $\model{M}^+=(|S|, E^+, \Val^+)$ of $\model{M}$.
\end{proposition}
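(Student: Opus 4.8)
The plan is to build the filtration in two stages: first pass to the intermediate $\Cl(\mu)$-filtration of Definition \ref{defn:int}, whose state-wise Boolean reduct is already playable by Lemma \ref{lem:res}, and then \emph{homogenize} that reduct state by state using the Boolean-to-$\lucas_n$ construction of Lemma \ref{lem:new}. The resulting effectivity functions will be genuinely playable, and the only thing left to check is that this replacement does not disturb the filtration condition (\ref{it:filt02}) of Definition \ref{defn:filt}.

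First I would fix $\Gamma=\Cl(\mu)$ and record that, since $\tau_\oplus$ and $\tau_\odot$ are term-definable from $\neg$ and $\iimplies$, the set $\Gamma$ is closed under these maps; in particular $\tau_{\sfrac{i}{n}}(\phi)\in\Gamma$ whenever $\phi\in\Gamma$. Then I would take $\model{M}^*=(|S|,E^*,\Val^*)$ to be the intermediate $\Gamma$-filtration of $\model{M}$, with $\Val^*$ given by Definition \ref{defn:filt}(\ref{it:filt01}) and $E^*$ defined by \eqref{eqn:int01} and \eqref{eqn:int02}; here $E^*$ is well defined because $\lucas_n$ is finite and the set in \eqref{eqn:int01} is nonempty, as it contains $\neg(\mu\iimplies\mu)$, whose value is constantly $0$. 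The remark following Definition \ref{defn:int} guarantees that $\model{M}^*$ is a genuine $\Gamma$-filtration: concretely, \eqref{eqn:int01} together with outcome monotonicity of $E(u)$ yields (\ref{it:filt02}) for proper coalitions, while \eqref{eqn:int02} combined with $N$-maximality and regularity of $E(u)$ (the latter from Lemma \ref{lem:playmonreg}) yields it for $C=N$.

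Next I would apply Lemma \ref{lem:res} to obtain that $\br{E^*(|u|)}$ is a playable Boolean effectivity function for each $u\in S$, and feed this into Lemma \ref{lem:new}: for every $u$ I define $E^+(|u|)$ to be the playable $\lucas_n$-valued effectivity function produced from $H=\br{E^*(|u|)}$ via \eqref{eq:defBEF}, so that $\br{E^+(|u|)}=\br{E^*(|u|)}$ and $E^+(|u|)$ is in particular homogeneous. Setting $\model{M}^+=(|S|,E^+,\Val^*)$ then gives a playable $\lucas_n$-model, since playability is imposed state by state.

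The hard part will be verifying condition (\ref{it:filt02}) for $\model{M}^+$, that is, $E^+(|u|)(C,|\Val(-,\phi)|)=E(u)(C,\Val(-,\phi))$ for all $\phi\in\Gamma$ and $C\in\power N$. The point is that $E^+$ and $E^*$ agree on the Boolean skeleton, and it is exactly those values that \eqref{eq:defBEF} reads off. Writing $f=|\Val(-,\phi)|$ and using $\tau_{\sfrac{i}{n}}(\phi)\in\Gamma$, one has $\tau_{\sfrac{i}{n}}(f)=|\Val(-,\tau_{\sfrac{i}{n}}(\phi))|$, which is idempotent; hence by \eqref{eq:defBEF}, by the agreement of $E^*$ with its reduct on idempotents, and by (\ref{it:filt02}) for $\model{M}^*$,
\[
E^+(|u|)(C,f)=\max\Bigl\{\tfrac{i}{n}\mid E(u)\bigl(C,\tau_{\sfrac{i}{n}}(\Val(-,\phi))\bigr)=1\Bigr\}.
\]
Finally, homogeneity of $E(u)$ gives $E(u)(C,\tau_{\sfrac{i}{n}}(\Val(-,\phi)))=1$ if and only if $E(u)(C,\Val(-,\phi))\geq\tfrac{i}{n}$, exactly as in the proof of Lemma \ref{lem:vaz}, so the maximum on the right equals $E(u)(C,\Val(-,\phi))$, as required. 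This establishes (\ref{it:filt02}) and completes the verification that $\model{M}^+$ is a playable $\Cl(\mu)$-filtration of $\model{M}$.
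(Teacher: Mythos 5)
Your proof is correct and takes essentially the same route as the paper's: form the intermediate $\Cl(\mu)$-filtration, apply Lemma \ref{lem:res} and then Lemma \ref{lem:new} state by state to obtain $E^+$, and verify condition (\ref{it:filt02}) of Definition \ref{defn:filt} via the idempotent functions $|\Val(-,\tau_{\sfrac{i}{n}}(\phi))|$ and homogeneity of $E(u)$. You merely make explicit some details the paper leaves implicit (nonemptiness of the maximum in \eqref{eqn:int01}, why the intermediate filtration is a genuine filtration, and the final appeal to homogeneity), all of which check out.
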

\begin{proof}
Consider the intermediate $\Cl(\mu)$-filtration $\model{M}^*=(|S|, E^*, \Val^*)$ of $\model{M}$. By Lemma \ref{lem:res}, the Boolean effectivity function $\smash{E^{*}(|u|)^\sharp\colon\power N \times \lucas_1^{|S|} \to \lucas_1}$ is playable for every $u\in S$.  By Lemma \ref{lem:new}, the map $\smash{E^+(|u|)\colon\power N \times \lucas_n^{|S|} \to \lucas_n}$ defined by 
\[
E^+(|u|)(C, f)=\max\left\{\tfrac in\in\lucas_{n} \mid E^{*}(|u|)^\sharp(C,\tau_{\sfrac in}(f))=1\right\}
\]
is also playable. We prove that $\model{M}^*:=(|S|, E^+, \Val^*)$ is a $\Cl(\mu)$-filtration of $\model{M}$. It suffices to check that $\model{M}^+$ satisfies condition (\ref{it:filt02}) of Definition \ref{defn:filt}. Let $u \in S$, $C \in \power N$, $\phi \in \Cl(\mu)$ and $i\in \{1, \ldots,n\}$. We obtain by definition of $E^+$ that $E^+(|u|)(C, |\Val(-,\phi)|)\geq \sfrac{i}{n}$ if and only if $E^{*}(|u|)^\sharp(C, |\Val(-,\tau_{\sfrac{i}{n}}(\phi))|)=1$. Since $\model{M}^*$ is an intermediate $\Cl(\mu)$-filtration of $\model{M}$, the last identity is in turn equivalent to $E(u)(C,\Val(-,\tau_{\sfrac{i}{n}}(\phi))=1$. Finally, this gives $E(u)(C, \Val(-,\phi))\geq \sfrac{i}{n}$.
\end{proof}

The next result shows that the gain of expressive power induced by the many-valued nature of $\lang$ and of its associated semantics is not enough to single out those playable models that are truly playable.

\begin{proposition}\label{prop:imp}
There is no set $\Lambda$ of $\lang$-formulas such that a playable $\lucas_n$-frame $\framme{F}$ is truly playable if and only if  every formula of $\Lambda$ is valid in $\framme{F}$.
\end{proposition}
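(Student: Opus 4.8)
The plan is to argue by contradiction, in the standard way one establishes a non-definability result: assuming such a $\Lambda$ existed, I would exhibit a single playable $\lucas_n$-frame $\framme{F}$ that is \emph{not} truly playable yet validates every formula of $\Lambda$, contradicting the ``only if'' direction of the putative characterisation. The engine of the argument is Proposition \ref{prop:filt}: every playable model can be replaced, one formula at a time, by a \emph{finite} playable filtration, and a finite playable model is automatically truly playable. Hence no $\lang$-formula can distinguish $\framme{F}$ from a truly playable frame.

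First I would produce the witness frame. By the correction of Pauly's theorem due to Goranko et al.\ \cite{Goranko2013} (cf.\ Theorem \ref{thm:Goranko}), there is an infinite outcome set $S$ carrying a playable Boolean effectivity function $H\colon \power N \times \lucas_1^S \to \lucas_1$ that is not truly playable, i.e.\ not of the form $H_G$ for any game form $G$. Applying Lemma \ref{lem:new} to $H$ yields a playable $\lucas_n$-valued effectivity function $E_0$ with $\br{E_0}=H$. Were $E_0$ truly playable, Lemma \ref{lem:uip} would force its restriction $\br{E_0}=H$ to be truly playable as well; hence $E_0$ is \emph{not} truly playable. Setting $E(u)=E_0$ for every $u\in S$ gives a constant frame $\framme{F}=(S,E)$ that is playable but not truly playable.

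Next I would show that every $\phi\in\Lambda$ is valid in $\framme{F}$. Fix such a $\phi$ and an arbitrary model $\model{M}=(\framme{F},\Val)$. By Proposition \ref{prop:filt} there is a playable $\Cl(\phi)$-filtration $\model{M}^+=(|S|,E^+,\Val^+)$. The crucial observation is that $|S|$ is finite: although $\Cl(\phi)$ is infinite, each of its formulas is built by $\neg$ and $\iimplies$ from the finitely many subformulas of $\phi$, and these connectives are interpreted by operations on the finite chain $\lucas_n$; thus the $\equiv_{\Cl(\phi)}$-class of a state is already determined by the finitely many values it assigns to the subformulas of $\phi$, giving at most $(n+1)^{m}$ classes, where $m$ is the number of subformulas of $\phi$. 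Consequently $\lucas_n^{|S|}$ is a finite MV-algebra, every MV-filter of which is principal, so each playable effectivity function $E^+(|u|)$ has a principal filter $E^+(|u|)(\varnothing,-)^{-1}(1)$ and is therefore truly playable. Hence $\model{M}^+$ is based on a truly playable frame $\framme{F}^+$. By the ``if'' direction of the assumed characterisation, every formula of $\Lambda$, and in particular $\phi$, is valid in $\framme{F}^+$, so $\model{M}^+\models\phi$. Lemma \ref{lem:filt} gives $\Val(u,\phi)=\Val^+(|u|,\phi)$ for all $u$, whence $\model{M}\models\phi$. As $\model{M}$ was arbitrary, $\phi$ is valid in $\framme{F}$.

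Thus all of $\Lambda$ is valid in $\framme{F}$, and the ``only if'' direction would force $\framme{F}$ to be truly playable, contradicting its construction. The two steps carrying the real weight are: (i) that a \emph{finite} playable $\lucas_n$-model is truly playable, which rests on the fact that every MV-filter of the finite MV-algebra $\lucas_n^{|S|}$ is principal (being closed under $\odot$, hence under $\wedge$, an MV-filter $F$ is the up-set of its minimum, and that minimum is idempotent since $\bigwedge F = (\bigwedge F)\odot(\bigwedge F)$, so by the remark after Definition \ref{defn:nbg} the function is principal); and (ii) securing a genuinely playable-but-not-truly-playable object in the first place, which I import from the Boolean setting through Lemma \ref{lem:new} rather than building by hand. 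Everything else is bookkeeping through the filtration lemma.
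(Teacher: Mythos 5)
Your proof is correct and takes essentially the same route as the paper's: assume $\Lambda$ exists, build a playable but not truly playable witness frame by applying Lemma \ref{lem:new} to Goranko et al.'s Boolean counterexample, then for each $\phi\in\Lambda$ pass to the finite playable $\Cl(\phi)$-filtration of Proposition \ref{prop:filt}, observe that finiteness forces true playability (every MV-filter of a finite MV-algebra is principal), and transfer validity back through Lemma \ref{lem:filt}; your added details (the $(n+1)^m$ bound on $|S|$, the idempotence of $\bigwedge F$, and the use of Lemma \ref{lem:uip} to see the witness is not truly playable) are exactly what the paper leaves implicit. Two cosmetic slips only: you consistently swap the labels ``if'' and ``only if'' (the implications themselves are used correctly), and the existence of the Boolean witness comes from \cite[Proposition 4]{Goranko2013} — which is what the paper cites — rather than from Theorem \ref{thm:Goranko} itself.
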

\begin{proof}
Assume that there exists a set $\Lambda$ of $\lang$-formulas such that a playable $\lucas_n$-frame $\framme{F}$  is truly playable if and only if every formula of $\Lambda$ is valid in~$\framme{F}$.
Let $\framme{F}$ be a playable $\lucas_n$-frame which is not truly playable. The existence of such $\framme{F}$ is a consequence of Lemma \ref{lem:new} applied to the effectivity function $E$ defined in \cite[Proposition 4]{Goranko2013}. For every $\phi \in \Lambda$ and every model $\model{M}$ based on $\framme{F}$, Proposition \ref{prop:filt} provides a playable $\Cl(\phi)$-filtration $\model{M}^+$. Since $\model{M}^+$ has a finite set of outcome states, it is truly playable. It follows from the definition of $\Lambda$ that $\model{M}^+\models \phi$ and from Lemma \ref{lem:filt} that $\model{M}\models \phi$. We have proved that every formula of $\Lambda$ is true in every model based on $\framme{F}$ and we conclude that $\framme{F}$ is truly playable, which is the desired contradiction.
\end{proof}

\subsection{$\lucas_n$-valued playable logic for finite playable $\lucas_n$-frames}

Proposition \ref{prop:imp} says that $\lang$ is not adequate for capturing the properties of $\lucas_n$-valued effectivity functions associated with game forms. Indeed, this language is not even expressive enough to distinguish between the playable and the truly playable $\lucas_n$-frames. Nevertheless, when the set of outcome states $S$ is finite, every playable $\lucas_n$-valued effectivity function is truly playable and it turns out that playability can be encoded by $\lang$-formulas; see our completeness result, Theorem \ref{thm:comp}. We start with axiomatizing the properties of playable $\lucas_n$-valued effectivity functions.

\begin{definition}\label{defn:log}
An \emph{$\lucas_n$-valued playable logic} is a subset $\logic{L}$ of $\Form_{\lang}$ which is closed under Modus Ponens, Uniform Substitution and Monotonicity (if $\phi \iimplies \psi \in \logic{L}$, then $[C]\phi \iimplies [C]\psi \in \logic{L}$ for every $C\in \power N$) and that contains an axiomatic base of \L ukasiewicz $(n+1)$-valued logic (see \cite{Grigolia77} or \cite[Section 8.5]{Cignoli2000}) together with the following axioms:
\begin{framed}
\noindent
\textbf{The axioms of $\lucas_n$-valued playable logic}
\begin{enumerate}
\begin{multicols}{2}
\item\label{eqn:ax01} $[C](p \odot p)  \leftrightarrow  [C] p \odot [C] p$,
\item\label{eqn:ax02} ${[C](p \oplus p)}  \leftrightarrow  [C] p \oplus [C] p$,
\item\label{eqn:ax03} $\neg[C]0$, 
\item\label{eqn:ax04} $([C] p \wedge [C']q ) \iimplies [C\cup C'] (p \wedge q)$,
\item\label{eqn:ax05} $[\varnothing] p  \iimplies \neg [N] \neg p,$
\end{multicols}
\end{enumerate}
for every $C, C' \in \power N$ such that $C\cap C'=\varnothing$. 
\end{framed}
\noindent
We denote by $\logic{P}_n$ the \emph{smallest   $\lucas_n$-valued playable logic}, that is, the intersection  of all the  $\lucas_n$-valued playable logics. We conform with common usage and we often write $ \vdash_{\logic{P}_n} \phi$ instead of $\phi \in \logic{P}_n$.

\begin{remark}\label{rem:proofsyst}
The use of the notation $ \vdash_{\logic{P}_n}$ is justified by the observation that  $\logic{P}_n$ can be equivalently introduced through a Hilbert style proof system. Indeed, it suffices to consider the Hilbert system whose axioms are the axioms (\ref{eqn:ax01})--(\ref{eqn:ax05}) above together with an axiomatic base of  \L ukasiewicz $(n+1)$-valued logic,  and whose inference rules are Modus Ponens, Uniform Substitution and Monotonicity. Clearly, a formula $\phi$ is a theorem in this system if and only if it belongs to $\logic{P}_n$. 
\end{remark}

\end{definition}

The axioms \eqref{eqn:ax01}--\eqref{eqn:ax05} together with the Monotonicity rule reflect the properties defining playability. In Remark \ref{rem:equiv} at the end of this section, we give equivalent and more intuitive axioms that can replace \eqref{eqn:ax01}--\eqref{eqn:ax02} in the axiomatization of $\logic{P}_n$. 

The following lemma can be proved by a standard induction argument.
\begin{lemma}\label{lem:ovx}
Let $\model{M}$ be a playable $\lucas_n$-model. If $\vdash_{\logic{P}_n } \phi$, then $\model{M} \models \phi$. 
\end{lemma}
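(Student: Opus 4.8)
The plan is to establish soundness of $\logic{P}_n$ by working not with the single model $\model{M}$ but with the whole class of playable $\lucas_n$-models. Let $\Lambda$ be the set of all formulas that are true in every playable $\lucas_n$-model. I would first check that $\Lambda$ is itself a $\lucas_n$-valued playable logic in the sense of Definition \ref{defn:log}; the minimality of $\logic{P}_n$ then gives $\logic{P}_n \subseteq \Lambda$, and since $\model{M}$ is playable this yields $\model{M}\models\phi$ whenever $\vdash_{\logic{P}_n}\phi$. Passing to the whole class is what makes the argument go through cleanly: truth in a single model need not be preserved by Uniform Substitution, whereas truth in all playable models is.

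To see that $\Lambda$ contains the required axioms, recall that $\Val(u, \phi \iimplies \psi)$ equals $1$ exactly when $\Val(u,\phi) \leq \Val(u,\psi)$, so validity of an implication reduces to a pointwise inequality between the values of its two sides. The axiomatic base of \L ukasiewicz $(n+1)$-valued logic lies in $\Lambda$ because each $\Val(u,-)$ interprets the propositional connectives in the MV-algebra $\lucas_n$, with respect to which $(n+1)$-valued \L ukasiewicz logic is sound. For the modal axioms I would read off each one directly from the defining property of the playable effectivity function $E(u)$ applied to $f=\Val(-,p)$ and $g=\Val(-,q)$: axioms \eqref{eqn:ax01}--\eqref{eqn:ax02} from homogeneity, axiom \eqref{eqn:ax03} from the safety property $E(u)(C,0)=0$, axiom \eqref{eqn:ax04} from superadditivity, and axiom \eqref{eqn:ax05} from regularity, which holds for playable effectivity functions by Lemma \ref{lem:playmonreg}.

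It remains to verify that $\Lambda$ is closed under the three inference rules. Modus Ponens is immediate from the equivalence between $\Val(u,\phi \iimplies \psi)=1$ and $\Val(u,\phi)\leq\Val(u,\psi)$. Monotonicity uses outcome monotonicity of $E(u)$: if $\Val(-,\phi)\leq\Val(-,\psi)$ pointwise on $S$, then $E(u)(C,\Val(-,\phi))\leq E(u)(C,\Val(-,\psi))$, i.e.\ $\Val(u,[C]\phi)\leq\Val(u,[C]\psi)$, for every $C$ and every $u$. For Uniform Substitution I would prove the standard substitution lemma: for a substitution $s$ of propositional variables by formulas and the model $\model{M}^s$ obtained from $\model{M}$ by replacing $\Val(u,p)$ with $\Val(u,s(p))$, one has $\Val(u, s(\phi)) = \Val^s(u,\phi)$ by induction on $\phi$; since $\model{M}^s$ is based on the same (playable) frame as $\model{M}$, membership of $\phi$ in $\Lambda$ forces $s(\phi)$ into $\Lambda$ as well.

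The only genuinely delicate point is the one flagged in the first paragraph: the correct object to reason about is the class of all playable models rather than the fixed $\model{M}$, so that Uniform Substitution is respected. Everything else is the routine verification that the five modal axioms transcribe, line by line, the homogeneity, safety, superadditivity and regularity clauses of playability, the last of these being supplied by Lemma \ref{lem:playmonreg}.
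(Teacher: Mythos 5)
Your proposal is correct and is, in substance, the same argument the paper has in mind when it says the lemma "can be proved by a standard induction argument": checking that each axiom \eqref{eqn:ax01}--\eqref{eqn:ax05} transcribes a playability clause (homogeneity, safety, superadditivity, regularity via Lemma \ref{lem:playmonreg}) and that the three rules preserve validity over the class of playable models. Your packaging via minimality of $\logic{P}_n$ rather than induction on derivations is an equivalent formulation (cf.\ Remark \ref{rem:proofsyst}), and your remark that Uniform Substitution forces one to work with validity over the class of playable frames rather than truth in the fixed model $\model{M}$ is exactly the right point of care.
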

\noindent
We will prove completeness of $\logic{P}_n$ with respect to the class of playable $\lucas_n$-models. Our proof is based on the construction of the canonical model.

\subsubsection{Construction of the canonical model}\label{sec:canon} 

Let us denote by $\free_{\logic{P}_n}$ the Lindenbaum-Tarski algebra of $\logic{P}_n$, that is,  the quotient of $\Form_\lang$ under the syntactic equivalence relation $\equiv$ defined by
\[
\phi  \equiv \psi \quad \mbox{ if } \quad    \vdash_{\logic{P}_n} \phi \iimplies   \psi \text{ and } \vdash_{\logic{P}_n} \psi \iimplies \phi,
\]
equipped with the operations  $1$, $\neg$, $\iimplies$ and $[C]$ defined as $1:=1/\equiv$, $\neg (\phi/\equiv):=\neg\phi/\equiv$, $ \phi/\equiv\, \iimplies \psi/\equiv\ :=(\phi \iimplies \psi)/\equiv$ and $[C](\phi/\equiv) :=[C]\phi/\equiv$, for every $C\in \power N$ and every $\phi, \psi \in \Form_\lang$. 
By abuse of notation, we denote the class $\phi/\equiv$ by $\phi$.

Since the logic $\logic{P}_n$ contains every tautology of \L ukasiewicz $(n+1)$-valued logic, the $\{\iimplies, \neg, 1\}$-reduct of  $\free_{\logic{P}_n}$ is an MV-algebra that belongs to the variety $\var{MV}_n$ generated by $\lucas_n$.

In the Boolean setting, one of the key ingredients of the construction of the canonical model is the ultrafilter theorem that allows us to separate by an ultrafilter any two different non-top elements of a Boolean algebra $\alg{B}$. We can rephrase this separation result using the bijective correspondence between the ultrafilters of $\alg{B}$ and the homomorphisms of $\alg{B}$ into the two-element Boolean algebra $\alg{2}$: for every $a\neq b \in \alg{B}\setminus\{1\}$, there is a homomorphism $u\colon \alg{B}\to \alg{2}$ such that $u(a)=1$ and $u(b)=0$. The variety $\var{MV}_n$ has an analogous property \cite{Cignoli2000}.
\begin{lemma}\label{lem:seppro}
Let $\alg{A}\in \var{MV}_n$. For every $a\neq b$ in $\alg{A}\setminus\{1\}$, there is a $\{\neg, \iimplies, 1\}$-homomorphism $u\colon \alg{A}\to \lucas_n$ such that $u(a)=1$ and $u(b)\neq 1$.
\end{lemma}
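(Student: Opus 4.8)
The plan is to deduce the statement from the structure theory of the variety $\var{MV}_n$, in direct analogy with the way the Boolean separation result follows from representing Boolean algebras as subalgebras of powers of $\alg{2}$. First I would recall (citing \cite{Cignoli2000}) that $\var{MV}_n$ is generated by the simple chain $\lucas_n$ and that its subdirectly irreducible members are exactly the subalgebras $\lucas_d$ with $d\mid n$, each of which is a simple MV-algebra admitting an embedding $\lucas_d \hookrightarrow \lucas_n$. Since $\{\neg,\iimplies,1\}$ generates all MV-operations, a reduct-homomorphism into $\lucas_n$ is automatically an MV-homomorphism, so it suffices to produce such a map. The key consequence is that every $\alg{A}\in\var{MV}_n$ is semisimple: it admits a subdirect embedding $\alg{A}\hookrightarrow \prod_i \lucas_{d_i}$, and composing the projections with the inclusions $\lucas_{d_i}\hookrightarrow \lucas_n$ yields a family of homomorphisms $h_i\colon\alg{A}\to\lucas_n$ that jointly separates points. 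Equivalently, the intersection of the maximal MV-filters of $\alg{A}$ is $\{1\}$, and maximal MV-filters correspond bijectively to homomorphisms onto simple chains, hence to homomorphisms into $\lucas_n$.

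Next I would convert this separating family into the single homomorphism demanded by the statement. Given $a\neq b$ in $\alg{A}\setminus\{1\}$, the biimplication $a\leftrightarrow b=(a\iimplies b)\odot(b\iimplies a)$ differs from $1$, because in any MV-algebra $a\leftrightarrow b=1$ holds precisely when $a=b$. By the separation property there is an index $i$ with $h_i(a\leftrightarrow b)\neq 1$, that is $h_i(a)\neq h_i(b)$ in $\lucas_n$, which already gives point-separation. To reach the asymmetric conclusion $u(a)=1$ and $u(b)\neq 1$, I would first pass to the quotient $q\colon\alg{A}\to\alg{A}/F$ by the MV-filter $F=\langle a\rangle$ generated by $a$, so that $q(a)=1$; since $\alg{A}/F$ again lies in $\var{MV}_n$, applying the semisimple separation above to $\alg{A}/F$ produces $v\colon\alg{A}/F\to\lucas_n$ with $v(q(b))\neq 1$, and then $u:=v\circ q$ satisfies $u(a)=1$ and $u(b)\neq 1$.

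The step requiring the most care, and which I expect to be the main obstacle, is guaranteeing that $q(b)\neq 1$, i.e.\ that $b\notin F=\langle a\rangle$; otherwise quotienting by $\langle a\rangle$ would already force $u(b)=1$. Here I would use the explicit description $\langle a\rangle=\{x\in\alg{A}\mid x\geq \bigodot_{j=1}^{k}a \text{ for some } k\geq 1\}$ and argue that $b$ cannot dominate every finite $\odot$-power of $a$. The convenient device is to carry out the argument inside a single simple factor $\lucas_d$ that separates $a$ and $b$: there $\langle a\rangle$ is proper exactly when the image of $a$ is the top element, so the orientation of the separation (which of $a,b$ is sent to $1$) must be chosen in accordance with the factor picked in the first step. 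Verifying that this choice is always available for $a\neq b$ below $1$ is the delicate bookkeeping point; the rest of the proof is the routine transfer of the standard semisimplicity argument for $\var{MV}_n$ into the language of $\{\neg,\iimplies,1\}$-homomorphisms.
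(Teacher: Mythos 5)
Your opening paragraph (semisimplicity of $\var{MV}_n$: every member is a subdirect product of chains $\lucas_d$ with $d\mid n$, each embeddable in $\lucas_n$, so that homomorphisms into $\lucas_n$ separate points) is correct, and it is exactly the content behind the paper's citation of \cite{Cignoli2000} --- the paper offers no proof of this lemma beyond that citation. The genuine gap is in your final step, and it is not ``delicate bookkeeping'' that can be fixed: the claim that $b$ cannot dominate every $\odot$-power of $a$, i.e.\ that $b\notin\langle a\rangle$, is false in general. Whenever $a\leq b$ (for instance $a=0$ and $b$ arbitrary) one has $b\geq a\geq \bigodot_{j=1}^{k}a$ for every $k$, so $b\in\langle a\rangle$ and your quotient sends $b$ to $1$. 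Moreover, no alternative argument can close this gap, because the statement as written is false for such pairs: as you yourself note, a $\{\neg,\iimplies,1\}$-homomorphism is automatically an MV-homomorphism, hence order-preserving, so $u(a)=1$ together with $a\leq b$ forces $u(b)=1$. Concretely, take $\alg{A}=\lucas_n$ with $n\geq 2$, $a=0$, $b=\tfrac{1}{n}$: every homomorphism $u\colon\lucas_n\to\lucas_n$ satisfies $u(a)=\neg u(1)=0\neq 1$. Your remark that ``the orientation of the separation \dots must be chosen in accordance with the factor'' is precisely the symptom of this obstruction; worse, neither element need be sent to $1$ by any homomorphism at all (in $\lucas_5$, whose unique endomorphism is the identity, take $a=\tfrac15$, $b=\tfrac25$).

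What your argument does establish is the correct repair of the lemma: (i) homomorphisms $\alg{A}\to\lucas_n$ separate points, and for every $b\neq 1$ some homomorphism sends $b$ below $1$ (your first paragraph); and (ii) the asymmetric conclusion $u(a)=1$, $u(b)\neq 1$ holds if and only if $b\notin\langle a\rangle$, equivalently $b\not\geq\bigodot_{j=1}^{n}a$ --- your quotient-by-$\langle a\rangle$ argument gives the ``if'' direction, and ``only if'' is immediate since $u^{-1}(1)$ is a filter containing $a$. Statement (i) is in fact all the paper ever uses: Lemma \ref{lem:seppro} is invoked in Proposition \ref{prop:tru} and Lemma \ref{lem:goo} only in the form ``if $v(\chi)=1$ for every $v\in W^c$, then $\chi=1$ in $\free_{\logic{P}_n}$'', which is exactly semisimplicity. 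So your machinery is the right one and suffices for the paper's purposes, but as a proof of the lemma as literally stated the proposal fails, and it fails at the step you flagged as the main obstacle --- not because the bookkeeping is hard, but because the claimed statement is false there.
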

\noindent
This separation property explains our choice of the set $W^c$ of $\{\neg, \iimplies, 1\}$-homomorphisms from $\free_{\logic{P}_n}$ to $\lucas_n$ as the universe of the canonical model of $\logic{P}_n$.

We will use the following technique to associate an $\lucas_n$-valued effectivity function $E^c(u)$ with every $u \in W^c$. For each $i\in \{1, \ldots, n\}$ we will define a subset $P_i$ of $\power N \times W^c \times \smash{\lucas_n^{ W^c}}$, such that $P_1\supseteq \dotsb \supseteq  P_{n}$. Then we will safely set
\[
E^{c}(u)(C,f):=\max \left\{\tfrac{i}{n}\in\lucas_{n} \mid (C,u,f) \in P_i\right\}.
\] 

\begin{definition}
For every $i \in\{1, \ldots, n\}$ let $P_i$ be the subset of $\power N \times W^c \times \lucas_n^{W^c}$ defined by
\begin{equation}\label{eqn:trc}
 P_i=\left\{(C,u,f)\mid \exists \phi \big( u([C]\phi)\geq \tfrac{i}{n} \ \text{ and }  \ \forall v\in W^{c} \big(v(\phi)\geq \tfrac{i}{n} \implies f(v)\geq \tfrac{i}{n}\big) \big)\right\}.
\end{equation}
We use the convention $P_0=\power N \times W^c \times \smash{\lucas_n^{ W^c}}$.
\end{definition}

\begin{lemma}\label{lem:tech}
The inclusion $P_i \subseteq P_{i-1}$ holds for each $i\in\{1, \ldots, n\}$.
\end{lemma}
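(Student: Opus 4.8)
The plan is to transfer a witness for $P_i$ into a witness for $P_{i-1}$ by \emph{Booleanizing} it. Suppose $(C,u,f)\in P_i$, so there is a formula $\phi$ with $u([C]\phi)\geq\tfrac{i}{n}$ and $v(\phi)\geq\tfrac{i}{n}\implies f(v)\geq\tfrac{i}{n}$ for every $v\in W^c$. I would take as the new witness the idempotent formula $\psi:=\tau_{\sfrac{i}{n}}(\phi)$ and verify the two defining conditions of $P_{i-1}$ for it. The whole argument hinges on the fact that the modality $[C]$ commutes with the threshold term $\tau_{\sfrac{i}{n}}$ up to provable equivalence.

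I would establish this commutation first, as it is the only substantial step. The homogeneity axioms~\eqref{eqn:ax01}--\eqref{eqn:ax02} state that $[C]$ commutes with $\tau_\odot$ and $\tau_\oplus$ modulo $\equiv$. Since $\tau_{\sfrac{i}{n}}$ is, by Definition~\ref{defi:termes}, the interpretation of a term built from finitely many copies of $\tau_\oplus$ and $\tau_\odot$, a routine induction on that term yields $\vdash_{\logic{P}_n}[C]\tau_{\sfrac{i}{n}}(\phi)\leftrightarrow\tau_{\sfrac{i}{n}}([C]\phi)$, i.e.\ the two sides coincide in $\free_{\logic{P}_n}$. Applying any $u\in W^c$ --- which, being a $\{\neg,\iimplies,1\}$-homomorphism, preserves $\oplus$, $\odot$ and hence the derived term $\tau_{\sfrac{i}{n}}$ --- gives the pointwise identity $u([C]\tau_{\sfrac{i}{n}}(\phi))=\tau_{\sfrac{i}{n}}(u([C]\phi))$, and likewise $v(\psi)=\tau_{\sfrac{i}{n}}(v(\phi))\in\{0,1\}$ for every $v\in W^c$.

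With this in hand the verification is bookkeeping. For the first condition, $u([C]\phi)\geq\tfrac{i}{n}$ gives $u([C]\psi)=\tau_{\sfrac{i}{n}}(u([C]\phi))=1\geq\tfrac{i-1}{n}$. For the second condition I distinguish two cases. When $i=1$ the inclusion $P_1\subseteq P_0$ is immediate since $P_0$ is the whole set $\power N\times W^c\times\lucas_n^{W^c}$. When $i\geq2$ we have $\tfrac{i-1}{n}>0$, so for any $v$ with $v(\psi)\geq\tfrac{i-1}{n}$ the value $v(\psi)\in\{0,1\}$ must equal $1$; this means $v(\phi)\geq\tfrac{i}{n}$, whence $f(v)\geq\tfrac{i}{n}\geq\tfrac{i-1}{n}$ by the original witness property. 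Thus $\psi$ witnesses $(C,u,f)\in P_{i-1}$. The main obstacle is precisely the commutation identity of the second paragraph: everything else is forced once the threshold can be pushed through the modality, which is exactly the role played by homogeneity.
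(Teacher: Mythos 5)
Your proof is correct and follows essentially the same route as the paper: the same witness $\tau_{\sfrac{i}{n}}(\phi)$, the same commutation of $[C]$ with $\tau_{\sfrac{i}{n}}$ via the homogeneity axioms, and the same Boolean-skeleton argument for the second condition. You merely fill in two details the paper leaves implicit (the induction establishing the commutation identity, and the trivial case $i=1$ where $P_0$ is the whole set), which is fine.
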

\begin{proof}  Assume that  $\phi$ satisfies the condition defining $P_i$ in \eqref{eqn:trc} for $C$, $f$, $u$ and $i>0$ and put $\rho=\tau_{\sfrac{i}{n}}(\phi)$. Then $u([C]\rho)=\tau_{\sfrac{i}{n}}\big(u([C]\phi)\big)=1\geq \frac{i-1}{n}$. Moreover, if~$v(\rho)\geq \frac{i-1}{n}$, then $v(\rho)=1$ since $\rho$ belongs to the Boolean skeleton of $\free_{\logic{P}_n}$. Therefore $v(\phi)\geq \frac{i}{n}$, which gives $f(v)\geq \frac{i}{n}\geq \frac{i-1}{n}$.
\end{proof}

\begin{definition}\label{defn:canon}
The \emph{canonical model} of $\logic{P}_n$ is the $\lucas_n$-model $\model{M}=(\framme{F}, \Val^c)$ with $\framme{F}=(W^c,  E^c)$  where $E^c(u)(C, f)$ is defined for every $u\in W^c$, every $C\in \power N$ and every $f\in \smash{\lucas_n^{ W^c}}$ by
\begin{equation} \label{eqn:jkl}
E^c(u)\big(C, f)=\left\{\begin{array}{lr}
\max \left \{\tfrac{i}{n}\in\lucas_{n} \mid (C,u,f) \in P_i\right\} & \text{ if } C\neq N,\\ 
\neg E^c(u)(\varnothing, \neg f) & \text{ if } C=N,
\end{array}\right.
\end{equation} 
and  where $\Val^c$ is defined by
\begin{equation}\label{eqn:valc}
\Val^c(u,p)=u(p), 
\end{equation}
for every $p \in \Prop$ and $u \in W^c$.
\end{definition}

In particular, for every $(C,u,f)\in \power N \times W^c \times \smash{\lucas_n^{ W^c}}$ with $C\neq N$, we have
\begin{equation}\label{eqn:nap}
E^c(u)\big(C, f) \geq \frac{i}{n}\quad \text{ if and only if }\quad  (C, u, f) \in P_i. 
\end{equation}
 The next proposition shows that the identity \eqref{eqn:valc} remains true in the canonical model after replacing $p$ by any formula $\mu \in \Form_\lang$.

\begin{proposition}[Truth Lemma]\label{prop:tru}
The canonical model $(W^c,  E^c,\Val^{c})$ of $\logic{P}_n$ satisfies $\Val^c(u, \mu)=u(\mu)$ for every $\mu \in \Form_\lang$ and every $u \in W^c$.
\end{proposition}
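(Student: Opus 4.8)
The plan is to prove the identity $\Val^c(u,\mu)=u(\mu)$ by induction on the structure of $\mu$. The base cases $\mu=1$ and $\mu=p\in\Prop$ are immediate: the first holds because every $u\in W^c$ is a $\{\neg,\iimplies,1\}$-homomorphism, and the second is exactly \eqref{eqn:valc}. For the propositional connectives $\mu=\neg\psi$ and $\mu=\psi\iimplies\chi$ I would combine the valuation clauses \eqref{eqn:val01}--\eqref{eqn:val02} with the induction hypothesis and the fact that $u$ preserves $\neg$ and $\iimplies$; these steps are routine. The whole weight of the argument falls on the modal case $\mu=[C]\phi$. Here the induction hypothesis says that the function $\Val^c(-,\phi)\colon W^c\to\lucas_n$ coincides with $v\mapsto v(\phi)$, so the task reduces to establishing $E^c(u)(C,\Val^c(-,\phi))=u([C]\phi)$.

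I would first treat a proper coalition $C\neq N$, where by \eqref{eqn:nap} it suffices to show, for every $i\in\{1,\ldots,n\}$, that $(C,u,\Val^c(-,\phi))\in P_i$ if and only if $u([C]\phi)\geq\frac{i}{n}$. The implication from right to left is easy: if $u([C]\phi)\geq\frac{i}{n}$, then $\phi$ itself witnesses membership in $P_i$ in the sense of \eqref{eqn:trc}, since the required condition $v(\phi)\geq\frac{i}{n}\implies\Val^c(v,\phi)\geq\frac{i}{n}$ holds trivially once $\Val^c(v,\phi)=v(\phi)$. The converse is the crux. Suppose the role of $\phi$ in \eqref{eqn:trc} is played by a witness $\psi$, so that $u([C]\psi)\geq\frac{i}{n}$ and, for every $v\in W^c$, $v(\psi)\geq\frac{i}{n}$ implies $v(\phi)\geq\frac{i}{n}$. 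Applying the threshold term $\tau_{\sfrac{i}{n}}$ of Definition \ref{defi:termes}, which is a term operation and hence preserved by every homomorphism $v$, this condition rewrites as $v(\tau_{\sfrac{i}{n}}(\psi))\leq v(\tau_{\sfrac{i}{n}}(\phi))$ for all $v\in W^c$, that is $v(\tau_{\sfrac{i}{n}}(\psi)\iimplies\tau_{\sfrac{i}{n}}(\phi))=1$ for all such $v$. By the separation property of Lemma \ref{lem:seppro}, this forces $\vdash_{\logic{P}_n}\tau_{\sfrac{i}{n}}(\psi)\iimplies\tau_{\sfrac{i}{n}}(\phi)$, whence the Monotonicity rule yields $\vdash_{\logic{P}_n}[C]\tau_{\sfrac{i}{n}}(\psi)\iimplies[C]\tau_{\sfrac{i}{n}}(\phi)$.

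At this point I would invoke the homogeneity axioms \eqref{eqn:ax01}--\eqref{eqn:ax02}, which allow the modality $[C]$ to commute with $\tau_\oplus$ and $\tau_\odot$, and therefore, since $\tau_{\sfrac{i}{n}}$ is a composite of these two maps, with $\tau_{\sfrac{i}{n}}$ itself: thus $\vdash_{\logic{P}_n}[C]\tau_{\sfrac{i}{n}}(\chi)\leftrightarrow\tau_{\sfrac{i}{n}}([C]\chi)$ for every $\chi$. Evaluating the theorem from the previous paragraph at $u$ and pushing $\tau_{\sfrac{i}{n}}$ outside the modality gives $\tau_{\sfrac{i}{n}}(u([C]\psi))\leq\tau_{\sfrac{i}{n}}(u([C]\phi))$. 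Since $u([C]\psi)\geq\frac{i}{n}$ makes the left-hand side equal to $1$, we obtain $\tau_{\sfrac{i}{n}}(u([C]\phi))=1$, i.e.\ $u([C]\phi)\geq\frac{i}{n}$, which completes the case $C\neq N$.

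Finally, for $C=N$ I would use the $N$-clause of \eqref{eqn:jkl}. Since $\neg\Val^c(-,\phi)=\Val^c(-,\neg\phi)$ by \eqref{eqn:val02} and $\varnothing\neq N$, the case already settled applied to $\varnothing$ and $\neg\phi$ gives $\Val^c(u,[N]\phi)=\neg E^c(u)(\varnothing,\Val^c(-,\neg\phi))=\neg u([\varnothing]\neg\phi)=u(\neg[\varnothing]\neg\phi)$. It then remains to appeal to the provable equivalence $\vdash_{\logic{P}_n}[N]\phi\leftrightarrow\neg[\varnothing]\neg\phi$, one direction of which is axiom \eqref{eqn:ax05} while the other is the syntactic counterpart of the regularity argument of Lemma \ref{lem:playmonreg}, derived from superadditivity \eqref{eqn:ax04} together with safety \eqref{eqn:ax03} and homogeneity. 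I expect the main obstacle to be the converse half of the case $C\neq N$: the passage from a merely pointwise entailment between the witness $\psi$ and $\phi$ to a genuine theorem of $\logic{P}_n$, which is exactly where the separation property, the Monotonicity rule, and the homogeneity axioms have to be orchestrated together.
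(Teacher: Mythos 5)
Your treatment of the case $C\neq N$ is exactly the paper's argument: the right-to-left direction by taking $\phi$ itself as the witness in \eqref{eqn:trc}, and the left-to-right direction via the threshold terms $\tau_{i/n}$, the separation property (Lemma \ref{lem:seppro}), the Monotonicity rule, and axioms \eqref{eqn:ax01}--\eqref{eqn:ax02} to commute $\tau_{i/n}$ with $[C]$. Likewise, your reduction of the case $C=N$ through \eqref{eqn:jkl} and the already-settled case of $\varnothing$ to the identity $u([N]\phi)=u(\neg[\varnothing]\neg\phi)$ is the paper's first step there (your detour through a provable equivalence plus evaluation at $u$, versus the paper's pointwise reasoning at each $u$, is an immaterial difference, again by Lemma \ref{lem:seppro}).

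The problem is in how you discharge the two halves of $[N]\phi\leftrightarrow\neg[\varnothing]\neg\phi$. The regularity-style argument of Lemma \ref{lem:playmonreg} that you invoke (superadditivity \eqref{eqn:ax04}, axiom \eqref{eqn:ax03}, homogeneity) proves only the implication $[N]\phi\iimplies\neg[\varnothing]\neg\phi$: it shows that $u([N]\tau_{i/n}(\phi))$ and $u([\varnothing]\neg\tau_{i/n}(\phi))$ cannot both equal $1$, which bounds $u([N]\phi)$ from \emph{above} by $u(\neg[\varnothing]\neg\phi)$, never from below. But axiom \eqref{eqn:ax05} as printed, $[\varnothing]p\iimplies\neg[N]\neg p$, gives that same implication and nothing more: in {\L}ukasiewicz logic $a\iimplies\neg b$ is equivalent to $b\iimplies\neg a$, so substituting $p\mapsto\neg\phi$ and cancelling the double negation turns \eqref{eqn:ax05} into $[N]\phi\iimplies\neg[\varnothing]\neg\phi$. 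Hence both of your ingredients yield $u([N]\phi)\leq E^c(u)(N,\Val^c(-,\phi))$, and the converse inequality --- the $N$-maximality direction $\neg[\varnothing]\neg\phi\iimplies[N]\phi$, which the Truth Lemma still needs --- is delivered by neither. It cannot be extracted from the regularity argument at all: $N$-maximality is independent of axioms \eqref{eqn:ax01}--\eqref{eqn:ax05} and the rules, as witnessed by the frame in which $E(u)(C,f)=\min_{s\in S}f(s)$ for every $u$ and every $C$, which validates all of them but falsifies $\neg[\varnothing]\neg p\iimplies[N]p$ for any non-constant valuation of $p$. To be fair, this is also the soft spot of the paper's own proof: there, the inequality $E^c(u)(N,\Val(-,\psi))\leq u([N]\psi)$ is attributed to axiom \eqref{eqn:ax05}, which is correct only if \eqref{eqn:ax05} is read as Pauly's $N$-maximality axiom $\neg[\varnothing]\neg p\iimplies[N]p$ (evidently the intended form), with the contradiction argument supplying the other inequality. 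Under that reading your proof closes and coincides with the paper's; under the axiom as literally printed, your assignment of the regularity argument to ``the other direction'' asks it for something it cannot produce, and the case $C=N$ remains open.
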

\begin{proof} 
We proceed by induction on the number of connectives in $\mu$. If $\mu\in \Prop$  or $\mu \in \{1,\neg\psi,\psi \iimplies \rho\}$, then the result follows immediately from (\ref{eqn:val01})~--~(\ref{eqn:val03}).

Let $\mu=[C]\psi$ for some $\psi \in \Form_\lang$ and $C\in \power N\setminus \{N\}$. We will prove that for any $u\in W^c$ and  $i\leq n$,
\begin{equation}\label{eq:truthlemma}
E^c(u)\big(C,\Val^c(-, \psi)\big)\geq \frac{i}{n}\quad \text{ if and only if }\quad u([C]\psi)\geq \frac{i}{n}.
\end{equation}
\noindent
First, assume $E^c(u)\big(C,\Val^c(-, \psi)\big)\geq \frac{i}{n}$. Then, by \eqref{eqn:nap} and \eqref{eqn:trc}, there is $\rho \in \Form_\lang$ such that $u([C]\rho)\geq \frac{i}{n}$ and $\Val^c(v, \psi)\geq \frac{i}{n}$ for any $v\in W^c$ satisfying $v(\rho) \geq \frac{i}{n}$. By the induction hypothesis, this means that for every $v \in W^c$ with $v(\rho)\geq \frac{i}{n}$, we have $v(\psi)\geq \frac{i}{n}$.
It follows that $v\big(\tau_{\sfrac{i}{n}}(\rho) \iimplies \tau_{\sfrac{i}{n}}(\psi)\big)=1$ for every $v \in W^c$. This yields $ \vdash_{\logic{P}_n} \tau_{\sfrac{i}{n}}(\rho) \iimplies \tau_{\sfrac{i}{n}}(\psi) $ since the $\{\iimplies, \neg, 1\}$-reduct of $\free_{\logic{P}_n}$ has the separation property (Lemma \ref{lem:seppro}). As $\logic{P}_n$ is closed under Monotonicity, we obtain \[  \vdash_{\logic{P}_n} [C] \tau_{i/n}(\rho) \iimplies [C] \tau_{i/n}(\psi).\] By axioms (\ref{eqn:ax01}) and (\ref{eqn:ax02}) of $\logic{P}_n$ (Definition \ref{defn:log}) and Uniform Substitution, this is equivalent to \[\vdash_{\logic{P}_n } \tau_{\sfrac{i}{n}}([C]\rho) \iimplies  \tau_{\sfrac{i}{n}}([C]\psi).\] Hence, if $v\in W^c$ and $v([C]\rho)\geq \frac{i}{n}$, then $v([C]\psi)\geq \frac{i}{n}$. We can thus conclude that $u([C]\psi)\geq \frac{i}{n}$.

Conversely, let $u([C]\psi)\geq \frac{i}{n}$. We obtain $(C,u,\Val(-,\psi))\in P_i$ by the induction hypothesis and by considering $\phi=\psi$ in the definition (\ref{eqn:trc}) of $P_i$.  This proves \eqref{eq:truthlemma}.

Finally, assume $\mu= [N]\psi$ for some $\psi \in \Form_\lang$. We will prove \[E^c(u)(N, \Val(-,\psi))=u([N]\psi).\] Indeed, on the one hand we obtain \[E^c(u)(N, \Val(-,\psi))  =  \neg E^c(u)(\varnothing, \Val(-, \neg \psi))\]  by \eqref{eqn:jkl}, which is in turn equal to $ u(\neg [\varnothing]\neg \psi)$ by the induction hypothesis and the first part of this proof. Axiom
\eqref{eqn:ax05} of $\logic{P}_n$ yields $E^c(u)(N, \Val(-,\psi))\leq u([N]\psi)$.

To prove the converse inequality, let us assume for the sake of contradiction that there is  $i\in\{1, \ldots, n\}$ such that 
$E^c(u)(N, \Val(-,\psi))<\frac{i}{n}= u([N]\psi)$.
Therefore $u([N]\tau_{i/n}(\psi))=1$, while $E^c(u)(\varnothing, \Val(-,\neg\tau_{i/n}(\psi))=1$ by \eqref{eqn:jkl}. It follows from this identity that $u([\varnothing]\neg \tau_{\sfrac{i}{n}}(\psi))=1$ by induction hypothesis and the first part of the proof. From axiom \eqref{eqn:ax06} applied with $C=N$ and $C'=\varnothing$ we get $u([N]\big(\tau_{\sfrac{i}{n}}(\psi) \wedge \neg \tau_{\sfrac{i}{n}}(\psi))\big)=1$; however, this is in contradiction with axiom \eqref{eqn:ax04} since $\tau_{\sfrac{i}{n}}(\psi) \wedge \neg \tau_{\sfrac{i}{n}}(\psi)=0$. 
\end{proof}

\subsubsection{Completeness result for $\logic{P}_n$}
In order to use the canonical model for the proof of completeness of $\logic{P}_n$ with respect to the class of the playable $\lucas_n$-models, we need the following result.

\begin{lemma}\label{lem:goo}
The canonical model of $\logic{P}_n$ is a playable $\lucas_n$-frame.
\end{lemma}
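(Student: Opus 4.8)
The plan is to verify that the canonical model satisfies each defining property of a playable $\lucas_n$-valued effectivity function (Definition \ref{defn:nbg}): outcome monotonicity, $N$-maximality, superadditivity, homogeneity, liveness and safety, for every $u \in W^c$. The case split in the definition \eqref{eqn:jkl} of $E^c(u)$ between $C \neq N$ and $C = N$ means most properties will be checked first for proper coalitions using the sets $P_i$ and characterization \eqref{eqn:nap}, and then the $N$-case will be handled separately, often directly from the defining clause $E^c(u)(N, f) = \neg E^c(u)(\varnothing, \neg f)$. A clean way to organize this is to observe that the $N$-case of \eqref{eqn:jkl} is precisely the intermediate-filtration condition \eqref{eqn:int02}, so $N$-maximality of $E^c(u)$ holds essentially by construction, and regularity for the pair $(\varnothing, N)$ follows similarly.

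First I would establish the properties for proper coalitions $C \neq N$ directly from \eqref{eqn:trc} and \eqref{eqn:nap}. Outcome monotonicity is immediate: if $f \leq g$ and $(C,u,f) \in P_i$ via some witness $\phi$, then the same $\phi$ witnesses $(C,u,g) \in P_i$, since $f(v) \geq \tfrac{i}{n}$ implies $g(v) \geq \tfrac{i}{n}$. Liveness ($E^c(u)(C,1)=1$) follows by taking $\phi = 1$ together with axiom \eqref{eqn:ax03} in its positive form, and safety ($E^c(u)(C,0)=0$) uses axiom \eqref{eqn:ax03} (i.e. $\neg[C]0$) to rule out any witness. For homogeneity I would exploit the same mechanism used in Lemma \ref{lem:tech}: the terms $\tau_{\sfrac{i}{n}}$ and the Boolean-skeleton trick, combined with axioms \eqref{eqn:ax01}--\eqref{eqn:ax02}, which are exactly the syntactic counterparts of the homogeneity equations $E(C, f \oplus f) = E(C,f) \oplus E(C,f)$ and $E(C, f\odot f)=E(C,f)\odot E(C,f)$.

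The main work, and the step I expect to be the principal obstacle, is superadditivity across the full lattice of coalitions, because of the interplay with the $N$-case. For two disjoint proper coalitions $C_1, C_2$ with $C_1 \cup C_2 \neq N$, I would take witnesses $\phi_1$ for $(C_1,u,f) \in P_i$ and $\phi_2$ for $(C_2,u,g) \in P_i$, form $\tau_{\sfrac{i}{n}}(\phi_1) \wedge \tau_{\sfrac{i}{n}}(\phi_2)$, and use axiom \eqref{eqn:ax04} to produce a witness for $(C_1 \cup C_2, u, f \wedge g) \in P_i$, reading off $v(\tau_{\sfrac{i}{n}}(\phi_1) \wedge \tau_{\sfrac{i}{n}}(\phi_2)) \geq \tfrac{i}{n}$ on the Boolean skeleton. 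The delicate case is when $C_1 \cup C_2 = N$, where the target value $E^c(u)(N, f\wedge g)$ is defined through the $\varnothing$-clause; here I would instead verify regularity and semi-playability and then invoke Proposition \ref{prop:laz}, which states that an $\lucas_n$-valued effectivity function is playable precisely when it is semi-playable, homogeneous, regular and $N$-maximal. Thus the cleanest route is to prove that $E^c(u)$ is semi-playable (all conditions restricted to proper coalitions, already handled above), homogeneous, $N$-maximal (by construction via \eqref{eqn:jkl}), and regular, and then conclude playability from Proposition \ref{prop:laz} rather than checking full superadditivity by hand. Regularity itself, $E^c(u)(C,f) \leq \neg E^c(u)(\overline{C}, \neg f)$, is the remaining technical point: for proper $C$ it would follow by contradiction from the superadditivity-on-proper-coalitions argument together with axiom \eqref{eqn:ax05}, mirroring the proof of Lemma \ref{lem:playmonreg}, and the case $C = N$ is immediate from the defining clause. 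Routing the proof through Proposition \ref{prop:laz} is what lets me avoid a direct and awkward treatment of superadditivity into the top coalition $N$.
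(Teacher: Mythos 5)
Your proposal is correct and coincides with the paper's own proof in every essential respect: the paper likewise reduces playability of $E^c(u)$ to semi-playability, homogeneity, regularity and $N$-maximality (i.e., it routes through Proposition \ref{prop:laz} exactly as you do), verifies the proper-coalition conditions by exhibiting witnesses in the sets $P_i$ (taking $\phi=1$ for liveness, axiom \eqref{eqn:ax03} with Monotonicity for safety, the terms $\tau_{\sfrac{i}{n}}$ with axioms \eqref{eqn:ax01}--\eqref{eqn:ax02} for homogeneity, and combined witnesses with axiom \eqref{eqn:ax04} for superadditivity on proper unions), reads $N$-maximality off the defining clause \eqref{eqn:jkl}, and closes with the same regularity-by-contradiction argument for $C\neq N$. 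The only slip is cosmetic: the contradiction ending the regularity argument comes from safety (axiom \eqref{eqn:ax03}, since the combined witness for $C$ and $\overline{C}$ forces $[N]$ of an impossible formula to hold at $u$), mirroring the role safety plays in Lemma \ref{lem:playmonreg}, rather than from axiom \eqref{eqn:ax05} as you state.
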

\begin{proof}
Let $u \in W^c$. It suffices to prove that $E^c(u)$ is semi-playable,  homogeneous, $N$-maximal and regular. It is easily checked that $E^c(u)(C, -)$ is monotonic for every coalition $C\neq N$.  The property of $N$-maximality is obtained by  \eqref{eqn:jkl}.  

For homogeneity, let $C\neq N \in \power N$ and $f \in \lucas_n^{W^c}$. We  will prove that for any $i\in \{1,\ldots, n\}$,
\begin{equation}\label{eq:lemplay}
E^c(u)(C,f\oplus f)\geq \frac{i}{n}\quad \text{ if and only if }\quad E^c(u)(C,f) \oplus E^c(u)(C,f)\geq \frac{i}{n}.
\end{equation}

First, assume $E^c(u)(C,f) \oplus E^c(u)(C,f)\geq \tfrac{i}{n}$ or, equivalently, $E^c(u)(C,f)\geq \tfrac{i}{2n}$. By the definition of $E^c$, there is a formula $\rho$ such that $u([C]\rho)\geq \frac{i}{2n}$ and $f(v)\geq \frac{i}{2n}$ for every $v$ satisfying $v(\rho)\geq\frac{i}{2n}$. On the one hand, by axioms \eqref{eqn:ax01} and \eqref{eqn:ax02} of $\logic{P}_n$ and considering $\phi=\tau_{\lceil \sfrac{i}{2n}\rceil}(\rho)$, we get
$
u([C]\phi)=\tau_{\lceil \sfrac{i}{2n} \rceil}(u([C]\rho))=1\geq \frac{i}{n}.
$
On the other hand, if $v\in W^c$ is such that $v(\phi)\geq \frac{i}{n}$, then $v(\phi)=1$ since $\phi$ is an idempotent element of $\free_{\logic{P}_n}$. Therefore $v(\rho)\geq \frac{i}{2n}$. This implies $f(v)\geq \frac{i}{2n}$, or, equivalently, $(f\oplus f)(v)\geq \frac{i}{n}$. We conclude that $E^c(u)(C,f\oplus f)\geq \frac{i}{n}$.

Conversely, assume  $E^c(u)(C,f\oplus f)\geq \frac{i}{n}$ for some $i>0$. The definition of $E^c$ yields a formula $\rho$ such that $u([C]\rho)\geq \frac{i}{n}$ and $f(v)\geq \frac{i}{2n}$  for any $v\in W^c$ with $v(\rho)\geq \frac{i}{n}$. By considering $\phi=\tau_{i/n}(\rho)$, we obtain on the one hand that 
$
u([C]\phi)=\tau_{\sfrac{i}{n}}\big(u([C]\rho)\big)=1\geq \frac{i}{2n}.
$
On the other hand, if $v\in W^c$ is such that $v(\phi)\geq\frac{i}{2n}$, then $v(\phi)=1$,
which means $v(\rho)\geq \frac{i}{n}$ so that $f(v)\geq \frac{i}{2n}$. We have proved that $E^c(u)(C, f)\geq \frac{i}{2n}$ or, equivalently, $E^c(u)(C, f)\oplus E^c(u)(C, f)\geq \frac{i}{n}$. This finishes the proof of \eqref{eq:lemplay}.

Analogously, we can show that
\[
E^c(u)(C,f\odot f)\geq \frac{i}{n}\quad \text{ if and only if }\quad E^c(u)(C,f) \odot E^c(u)(C,f)\geq \frac{i}{n}.
\]
Employing $N$-maximality and the first part of the proof, it is easy to prove
that for every $f\in \lucas_n^{W^c}$, we have $E^c(u)(N, f\oplus f)=E^c(u)(N,f) \oplus E^c(u)(N,f)$ and $E^c(u)(N, f\odot f)=E^c(u)(N,f) \odot E^c(u)(N,f)$. The function $E^c(u)$ is hence homogeneous.

Let us prove that $E^c(u)$ has  safety  for $C\neq N$. By way of contradiction, assume that $E^c(u)(C, 0)\geq \frac{1}{n}$. There is a formula $\phi$ such that $u([C]\phi)\geq \frac{1}{n}$ and $v(\phi)=0$ for every $v\in W^c$. We deduce that $\vdash_{\logic{P}_n } \phi \iimplies 0$ and hence $\vdash_{\logic{P}_n } [C]\phi \iimplies [C]0$ by Monotonicity. It follows that $\frac{1}{n}\leq u([C]\phi)\leq u([C]0)=0$, a contradiction.

To prove that $E^c(u)$ has  liveness for every $C\neq N$, it suffices to consider $\phi=1$ in \eqref{eqn:trc} in order to show $(C,u, 1) \in P_1$.

We have to prove coalition monotonicity for $C_1$ and $C_2$ such that $C_1 \cap C_2=\varnothing$ and $C_1\cup C_2\neq N$. It is enough to prove the following:  if $E^c(u)(C_1,f_1)\wedge E^c(u)(C_2,f_2)\geq \frac{i}{n}$ for some $i \in \{1, \ldots, n\}$, then $E^c(u)(C_1\cup C_2, f_1\wedge f_2)\geq \frac{i}{n}$. Let $\ell \in\{1,2\}$ and denote by $\phi_\ell$ a formula such that $u([C_\ell] \phi_\ell)\geq \frac{i}{n}$ and $f_\ell(v)\geq \frac{i}{n}$, for every $v$ satisfying $v(\phi_\ell)\geq \frac{i}{n}$. Thus we can consider $\phi=\phi_1\wedge \phi_2$ in \eqref{eqn:trc} to get  $(C_1\cup C_2, u, f_1 \wedge f_2) \in P_{i}$.

It remains to check that $E^c(u)$ is regular. By \eqref{eqn:jkl}, it suffices to prove that it is $C$-regular for every $C\neq N$. For the sake of contradiction, assume that there exists $i\in\{1, \ldots, n\}$ such that 
$
\neg E^c(u)(\overline{C}, \neg f) < \frac{i}{n} \leq   E^c(u)({C},  f).
$
It follows that $E^c(u)({C},  \tau_{\sfrac{i}{n}}(f))=1$, while $E^c(u)(\overline{C}, \tau_{\sfrac{j}{n}}(\neg f))=1$ for $\frac{j}{n}=1-\frac{i-1}{n}$. By superadditivity, we obtain
$
E^c(u)(N, \tau_{\sfrac{i}{n}}(f) \wedge  \tau_{\sfrac{j}{n}}(\neg f) )=1,
$
which is a contradiction since $\tau_{\sfrac{i}{n}}(f) \wedge  \tau_{\sfrac{j}{n}}(\neg f)$ is the constant map $0$.
\end{proof}

\begin{theorem}[Completeness of $\logic{P}_n$]\label{thm:comp}  For any $\phi \in \Form_\lang$, the following assertions are equivalent:
\begin{enumerate}
\item\label{it:pqa01} $\vdash_{\logic{P}_n} \phi$.
\item\label{it:pqa02} $\phi$ is true in every playable $\lucas_n$-model.
\item\label{it:pqa03} $\phi$ is true in every finite playable $\lucas_n$-model.
\item\label{it:pqa04} $\phi$ is true in every truly playable $\lucas_n$-model.
\end{enumerate}
\end{theorem}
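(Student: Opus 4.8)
The plan is to prove the cycle of implications $(\ref{it:pqa01}) \Rightarrow (\ref{it:pqa02}) \Rightarrow (\ref{it:pqa03}) \Rightarrow (\ref{it:pqa04}) \Rightarrow (\ref{it:pqa01})$, so that all four assertions become equivalent. The implications $(\ref{it:pqa02}) \Rightarrow (\ref{it:pqa03})$ and $(\ref{it:pqa04}) \Rightarrow (\ref{it:pqa03})$ are trivial specializations, since a finite (respectively, truly) playable $\lucas_n$-model is in particular a playable $\lucas_n$-model; the genuine content lies in soundness $(\ref{it:pqa01}) \Rightarrow (\ref{it:pqa02})$ and in completeness $(\ref{it:pqa03}) \Rightarrow (\ref{it:pqa01})$.

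Soundness $(\ref{it:pqa01}) \Rightarrow (\ref{it:pqa02})$ is exactly Lemma~\ref{lem:ovx}, which is already granted. For the step $(\ref{it:pqa03}) \Rightarrow (\ref{it:pqa04})$, I would argue that a \emph{finite} playable $\lucas_n$-model is automatically truly playable: since the state space $S$ is finite, the MV-filter $E(u)(\varnothing,-)^{-1}(1)$ of $\lucas_n^S$ is a filter of a finite MV-algebra and is therefore principal, so $E(u)$ is principal, hence truly playable, for every $u$. Consequently every formula true in all truly playable $\lucas_n$-models is true in all finite playable ones, giving $(\ref{it:pqa04}) \Rightarrow (\ref{it:pqa03})$ as well; combined with the trivial $(\ref{it:pqa02}) \Rightarrow (\ref{it:pqa03})$, the three semantic conditions collapse to one once we also close the loop via completeness. (Alternatively, I would simply show $(\ref{it:pqa03}) \Rightarrow (\ref{it:pqa01})$ and $(\ref{it:pqa01}) \Rightarrow (\ref{it:pqa04}) \Rightarrow (\ref{it:pqa03})$, the latter following from the finiteness-implies-true-playability observation together with soundness restricted to truly playable models.)

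The crux is completeness $(\ref{it:pqa03}) \Rightarrow (\ref{it:pqa01})$, which I would establish by contraposition using the canonical model of Definition~\ref{defn:canon}. Suppose $\not\vdash_{\logic{P}_n} \phi$. Then the class $\phi/\equiv$ is not the top element $1$ of the Lindenbaum--Tarski algebra $\free_{\logic{P}_n}$, so by the separation property (Lemma~\ref{lem:seppro}) there is a $\{\neg,\iimplies,1\}$-homomorphism $u\colon \free_{\logic{P}_n} \to \lucas_n$ with $u(\phi) \neq 1$; this $u$ is a point of the canonical universe $W^c$. By the Truth Lemma (Proposition~\ref{prop:tru}), $\Val^c(u,\phi) = u(\phi) \neq 1$, so $\phi$ is not true in the canonical model. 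By Lemma~\ref{lem:goo}, the canonical model is a playable $\lucas_n$-frame, so we have produced a playable $\lucas_n$-model refuting $\phi$. This shows $(\ref{it:pqa01})$ fails when $\phi$ is refuted in some playable model, i.e.\ contrapositive of $(\ref{it:pqa02}) \Rightarrow (\ref{it:pqa01})$.

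The remaining obstacle is that the canonical model need not be \emph{finite}, so it does not directly witness the negation of $(\ref{it:pqa03})$. To repair this, I would apply the playable filtration machinery: taking $\mu = \phi$ and applying Proposition~\ref{prop:filt} to the canonical model yields a \emph{playable} $\Cl(\phi)$-filtration $\model{M}^+$ with finitely many states (the quotient $|W^c|$ is finite because $\Cl(\phi)$ is finite and each state is determined by the finitely many values in $\lucas_n$ it assigns to formulas in $\Cl(\phi)$). By Lemma~\ref{lem:filt}, $\Val^+(|u|,\phi) = \Val^c(u,\phi) \neq 1$, so $\phi$ fails in the finite playable model $\model{M}^+$, contradicting $(\ref{it:pqa03})$. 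This is precisely the point where the finiteness of the scale $\lucas_n$ is indispensable, as it guarantees the filtration quotient is finite; I expect assembling this filtration step correctly---verifying that $\Cl(\phi)$ is indeed closed under $\tau_\oplus,\tau_\odot$ up to logical equivalence and that $\model{M}^+$ genuinely witnesses the failure---to be the main technical care required, though all the heavy lifting has been front-loaded into Proposition~\ref{prop:filt} and Lemma~\ref{lem:filt}.
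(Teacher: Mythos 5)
Your proposal is correct, and it assembles exactly the same ingredients as the paper --- the soundness lemma (Lemma~\ref{lem:ovx}), the canonical model with its Truth Lemma (Proposition~\ref{prop:tru}) and its playability (Lemma~\ref{lem:goo}), and the playable filtration (Proposition~\ref{prop:filt} with Lemma~\ref{lem:filt}) --- but in a different arrangement. The paper proves completeness against the \emph{largest} semantic class: it shows (\ref{it:pqa02})~$\Rightarrow$~(\ref{it:pqa01}) directly, since the canonical model is itself a playable $\lucas_n$-model, so no filtration is needed in that step; filtration is then invoked to collapse (\ref{it:pqa02}), (\ref{it:pqa03}) and (\ref{it:pqa04}) into one another. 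You instead prove completeness against the \emph{smallest} class, (\ref{it:pqa03})~$\Rightarrow$~(\ref{it:pqa01}), which forces you to filter the canonical model to manufacture a finite counter-model, and you then connect (\ref{it:pqa04}) through the observation that finite playable models are truly playable (every MV-filter of the finite algebra $\lucas_n^S$ is principal) rather than through Lemma~\ref{lem:filt}. Both routes work; yours makes the role of true playability more transparent (it is simply absorbed by finiteness, which is also how the paper treats it in the discussion preceding the theorem and in Appendix~\ref{appendix:TP}), while the paper's is more economical in the completeness step itself and uses filtration only where it is unavoidable, namely to pass from finite models back to arbitrary playable ones, i.e.\ (\ref{it:pqa03})~$\Rightarrow$~(\ref{it:pqa02}).

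Two slips, both repairable. First, in your main text you announce the step ``(\ref{it:pqa03})~$\Rightarrow$~(\ref{it:pqa04})'' but the argument you give (finite playable implies truly playable) proves (\ref{it:pqa04})~$\Rightarrow$~(\ref{it:pqa03}); as written, nothing in that version of the cycle ever \emph{arrives} at (\ref{it:pqa04}). Your parenthetical alternative fixes this: $(\ref{it:pqa01}) \Rightarrow (\ref{it:pqa02}) \Rightarrow (\ref{it:pqa04}) \Rightarrow (\ref{it:pqa03}) \Rightarrow (\ref{it:pqa01})$ is a genuine cycle, where the entry into (\ref{it:pqa04}) uses only that truly playable models are in particular playable. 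Second, $\Cl(\phi)$ is \emph{not} a finite set of formulas: it is the closure of the subformulas of $\phi$ under $\neg$ and $\iimplies$, hence infinite. What is true --- and what your argument actually needs --- is that the quotient $|W^c|$ is finite, because the valuation of every formula of $\Cl(\phi)$ at a state is determined truth-functionally, via \eqref{eqn:val01} and \eqref{eqn:val02}, by the valuations of the finitely many subformulas of $\phi$, so $\equiv_{\Cl(\phi)}$ has at most $(n+1)^{k}$ classes, $k$ being the number of subformulas of $\phi$. (The same observation shows that $\Cl(\phi)$ is literally, not merely up to equivalence, closed under $\tau_\oplus$ and $\tau_\odot$, since $\oplus$ and $\odot$ are definable from $\neg$ and $\iimplies$.) With these two repairs your proof is complete and matches the paper's in all essential content.
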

\begin{proof}
The implication (\ref{it:pqa01}) $\implies$ (\ref{it:pqa02}) is the content of Lemma \ref{lem:ovx}. The equivalences (\ref{it:pqa02}) $\iff$ (\ref{it:pqa03}) and (\ref{it:pqa03}) $\iff$ (\ref{it:pqa04}) follow from  Proposition \ref{prop:filt} and Lemma \ref{lem:filt}. Finally, it remains to argue for the implication (\ref{it:pqa02}) $\implies$ (\ref{it:pqa01}). We know by Lemma \ref{lem:goo} that $\model{M}^c$ is a playable $\lucas_n$-model. According to Proposition \ref{prop:tru}, $\model{M}^c \models \phi$ means that the class of $\phi$ is equal to $1$ in $\free_{\logic{P}_n}$, or, equivalently, $ \vdash_{\logic{P}_n} \phi$.
\end{proof}

\begin{remark}\label{rem:equiv}	
We can use the formulas $\tau_{\sfrac{i}{n}}(p)$ to replace axioms \eqref{eqn:ax01} and \eqref{eqn:ax02}  of $\logic{P}_n$ (Definition \ref{defn:log}) by a family of axioms, which are easier to understand. Indeed, put
\begin{equation*}
\begin{split}
A &=\{[C](p\star p)\leftrightarrow ([C] p \star [C] p)\mid \star\in\{\odot, \oplus\}, C\in \power N\},\\
B &=\{[C]\tau_{i/n}(p)\leftrightarrow\tau_{i/n}([C]p)\mid i \in\{1, \ldots, n\}, C\in \power N\}.
\end{split}
\end{equation*}
It follows from the definition of an $\lucas_n$-valued playable  logic that $B\subseteq \logic{P}_n$. A careful analysis of the proofs of Lemma \ref{lem:tech}, Proposition \ref{prop:tru}, Lemma \ref{lem:goo} and Theorem \ref{thm:comp} shows that we have only used the axioms in $A$ in the form of substitutions in formulas of $B$. Denote by $\logic{P}'_n$ the smallest set of formulas that contains an~axiomatic base of \L ukasiewicz $(n+1)$-valued logic, the set $B$, the axioms (\ref{eqn:ax03})--(\ref{eqn:ax05}) of Definition \ref{defn:log}, and that is closed under Modus Ponens, Uniform Substitution and Monotonicity. It follows from the previous observation  that for  any $\phi \in \Form_\lang$ we have $ \vdash_{\logic{P}'_n} \phi$ if and only if $\model{M}\models \phi$ for every  playable $\lucas_n$-model. Thus $\logic{P}'_n=\logic{P}_n$.

Thus the set of axioms $A$ can be equivalently replaced by $B$. Hence, the content of axioms \eqref{eqn:ax01}--\eqref{eqn:ax02} of $\logic{P}_{n}$ can be rephrased as follow.
\begin{quotation}
 For any $i\leq n$, the following two assertions are equivalent:
 \begin{itemize}
 \item The truth value of the statement `coalition $C$ can enforce $\phi$' is at least $\frac{i}{n}$.
 \item Coalition $C$ can enforce an outcome state in which the truth value of $\phi$ is at least $\frac{i}{n}$.
 \end{itemize}
\end{quotation}

\end{remark}

\subsection{$\lucas_n$-valued truly playable logics for  truly playable enriched $\lucas_n$-frames}
Theorem \ref{thm:comp} says that $\logic{P}_n$ is the logic of playable rather than truly playable effectivity functions. Moreover, by Proposition \ref{prop:imp} there is no axiomatization of truly playable effectivity functions in the language $\lang$. Thus the presented many-valued approach is a faithful generalization of the Boolean framework; see \cite{Goranko2013}. In fact the authors of \cite{Goranko2013} go beyond this limitation in the Boolean setting by adding a~new connective to $\lang$ and by enriching the neighborhood semantics with a Kripke relation. We follow this idea by designing the modal equivalent of truly playable $\lucas_n$-valued effectivity functions.

Let $\lang^+$ be the language $\lang\cup \{[\Oo]\}$ where $[\Oo]$ is unary.  The set $\Form_{\lang^+}$  of formulas is defined inductively from the countably infinite set $\Prop$ of propositional variables by the following rules:
\[
\phi::= 1\ \vert \ p  \ \vert \ \phi \iimplies \phi \ \vert \ \neg \phi \ \vert \ [C]\phi \ \vert\  [\Oo]\phi
\]
where $p \in \Prop$ and $C \in \power N$. 

In order to interpret $\lang^+$-formulas, we enrich the $\lucas_n$-frames with a~binary relation.
\begin{definition}\label{defn:lplus}
A tuple $\framme{F}=(S,E,R)$ is an \emph{enriched $\lucas_n$-frame} if $(S,E)$ is  an $\lucas_n$-frame and $R \subseteq S\times S$. We say that  $\framme{F}=(S,E,R)$ is \emph{standard} if $R=\{(u,v)\mid E(u)(\varnothing, \neg \chi_{\{v\}})=0\}$.


A tuple  $\model{M}=(S,E,R, \Val)$ is an \emph{enriched $\lucas_n$-model (based on $(S,E,R)$)} if $(S,E,R)$ is an enriched $\lucas_n$-frame and $\Val\colon S\times \Prop \to \lucas_n$.
An enriched $\lucas_n$-frame $\framme{F}=(S,E,R)$ or an enriched $\lucas_n$-model $\model{M}=(S,E,R,\Val)$ is called \emph{playable} (\emph{truly playable}, respectively) if $(S,E)$ is a playable (truly playable, respectively) $\lucas_n$-frame.
\end{definition}

In an enriched $\lucas_n$-model,  the valuation map $\Val$ is extended inductively to $S\times\Form_{\lang^+}$ by using rules \eqref{eqn:val01}~--~\eqref{eqn:val03} for the connectives $1$, $\neg$ and $\iimplies$, by  using rule \eqref{eqn:val} for the connectives $[C]$, where $C \in \power N$, and by putting
\begin{equation}
\Val(u, [\Oo] \phi)=\min\{\Val(v,\phi) \mid (u,v)\in R\}
\end{equation}
for any $\phi \in \Form_{\lang^+}$ and  $u\in S$.

It turns out that the class of  standard truly playable enriched $\lucas_n$-frames can be defined inside the class of standard playable enriched $\lucas_n$-frames by an $\lang^+$-formula. The next assertion is the $\lucas_n$-valued generalization of \cite[Proposition 14]{Goranko2013}.
\begin{proposition}
A standard playable enriched $\lucas_n$-frame $\framme{F}$ is truly playable if and only if $[\varnothing] \phi \leftrightarrow [\Oo]\phi$ is valid in $\framme{F}$.
\end{proposition}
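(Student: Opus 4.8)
The plan is to reduce the validity of $[\varnothing]\phi \leftrightarrow [\Oo]\phi$ to a pointwise functional identity and then to recognise that identity as a reformulation of principality. Since in $\lucas_n$ the biconditional $a\leftrightarrow b$ evaluates to $1$ exactly when $a=b$, and since for a fixed frame the function $\Val(-,\phi)$ ranges over all of $\lucas_n^S$ as $\phi$ runs over a fresh propositional variable and $\Val$ runs over all valuations, the formula $[\varnothing]\phi \leftrightarrow [\Oo]\phi$ is valid in $\framme{F}=(S,E,R)$ if and only if
\[
E(u)(\varnothing, f) = \min\{f(v) \mid (u,v)\in R\}\qquad \text{for every } u\in S \text{ and every } f\in \lucas_n^S .
\]
First I would record the effect of standardness: writing $W_u=\{v\mid (u,v)\in R\}$ for the successor set of $u$, and $\mathcal{F}_u:=\br{E(u)}(\varnothing,-)^{-1}(1)$ for the associated subset of $\power S$, I claim $W_u=\bigcap \mathcal{F}_u$. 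Indeed $\neg\chi_{\{v\}}=\chi_{S\setminus\{v\}}$ is idempotent, so $E(u)(\varnothing,\neg\chi_{\{v\}})\in\{0,1\}$ by Lemma \ref{lem:uip}, and it equals $0$ precisely when $S\setminus\{v\}\notin\mathcal{F}_u$, that is, precisely when $v$ belongs to every member of $\mathcal{F}_u$. Since $E(u)$ is playable, $\br{E(u)}$ is a playable Boolean effectivity function by Lemma \ref{lem:uip}, so $\mathcal{F}_u$ is a (proper, by safety) filter of $\power S$.

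Second, working at a fixed $u$ and abbreviating $E=E(u)$, $\mathcal{F}=\mathcal{F}_u$, $W=W_u$, I would exploit homogeneity exactly as in the proof of Lemma \ref{lem:vaz} to pass to Boolean cuts: for every $i\in\{1,\dots,n\}$ one has $E(\varnothing,f)\geq\tfrac{i}{n}$ iff $E(\varnothing,\tau_{\sfrac{i}{n}}(f))=1$ iff $\{s\mid f(s)\geq\tfrac{i}{n}\}\in\mathcal{F}$, because $\tau_{\sfrac{i}{n}}(f)=\chi_{\{s\mid f(s)\geq \sfrac{i}{n}\}}$ is idempotent. Under this translation the required identity $E(\varnothing,f)=\min_{v\in W}f(v)$ becomes, level by level, the statement that $\{s\mid f(s)\geq\tfrac{i}{n}\}\in\mathcal{F}$ if and only if $W\subseteq\{s\mid f(s)\geq\tfrac{i}{n}\}$, i.e.\ that $\mathcal{F}=\{X\in\power S\mid X\supseteq W\}$ is the principal filter generated by $W$.

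Finally I would close both implications, recalling from Definition \ref{defn:nbg} that $E$ is principal exactly when $E(\varnothing,-)^{-1}(1)$ is a principal MV-filter, equivalently — since its generator $\bigodot_{i=1}^{n}g=\chi_{g^{-1}(1)}$ is idempotent — exactly when $\mathcal{F}$ is a principal filter of $\power S$. If $\framme{F}$ is truly playable then each $E(u)$ is principal, so $\mathcal{F}$ is principal and hence equals $\{X\mid X\supseteq\bigcap\mathcal{F}\}=\{X\mid X\supseteq W\}$; the cut computation above then yields the identity, with safety forcing $W\neq\varnothing$ so that the minimum is well defined. Conversely, if the identity holds, evaluating it at the idempotent $f=\chi_X$ gives $\mathcal{F}=\{X\mid W\subseteq X\}$, a principal Boolean filter; hence $E(\varnothing,-)^{-1}(1)=\{f\mid f\geq\chi_W\}$ is a principal MV-filter, $E$ is principal, and since $\framme{F}$ is already playable it is truly playable. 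The main obstacle is the bookkeeping in the second step: converting the $\lucas_n$-valued value $E(\varnothing,f)$ into its Boolean cuts through homogeneity, and verifying that the successor set $W_u$ produced by the standardness condition is exactly the generator $\bigcap\mathcal{F}_u$ of the candidate principal filter.
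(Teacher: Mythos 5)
Your proof is correct, but it takes a genuinely different route from the paper's. The paper proves this proposition by reduction to the Boolean case: via Lemma \ref{lem:uip} it passes to the skeleton frame $\br{\framme{F}}=(S,\br{E},R)$, invokes the Boolean version of the statement (\cite[Proposition 14]{Goranko2013}) as a black box, and transfers validity of the scheme between $\framme{F}$ and $\br{\framme{F}}$ by a contradiction argument in which a many-valued countermodel to $[\varnothing]\phi\leftrightarrow[\Oo]\phi$ is converted into a Boolean one by applying $\tau_{\sfrac{i}{n}}$ and homogeneity. You never cite the Boolean result; instead you characterize validity of the scheme as the pointwise identity $E(u)(\varnothing,f)=\min\{f(v)\mid (u,v)\in R\}$ for all $f\in\lucas_n^S$ (legitimate, since $a\leftrightarrow b=1$ in $\lucas_n$ iff $a=b$ and $\Val(-,p)$ ranges over all of $\lucas_n^S$), use standardness plus upward closure of the filter $\mathcal{F}_u=\br{E(u)}(\varnothing,-)^{-1}(1)$ to identify the successor set $W_u$ with $\bigcap\mathcal{F}_u$, and then use the homogeneity cuts $\tau_{\sfrac{i}{n}}$ (exactly as in Lemma \ref{lem:vaz}) to show that the identity holds level by level iff $\mathcal{F}_u$ is the principal filter generated by $W_u$, which is principality of $E(u)$. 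In effect you inline a self-contained proof of the Boolean proposition rather than quoting it. What the paper's route buys is brevity and uniformity with its skeleton-transfer machinery (Lemmas \ref{lem:uip}, \ref{lem:vaz}, \ref{lem:new}); what yours buys is independence from the cited external result and a sharper semantic picture: the scheme forces $E(u)(\varnothing,-)$ to coincide with the necessity operator along $R$, which over a standard frame is precisely principality. A further small dividend of your argument, worth making explicit, is that the direction ``validity implies truly playable'' never actually uses standardness: evaluating the identity at idempotents $\chi_X$ already yields $\mathcal{F}_u=\{X\mid W_u\subseteq X\}$, hence $E(u)(\varnothing,-)^{-1}(1)=\{f\mid f\geq\chi_{W_u}\}$, with safety guaranteeing $W_u\neq\varnothing$. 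All the individual steps (the reduction of validity to the functional identity, the computation $W_u=\bigcap\mathcal{F}_u$, the cut translation, and both closing implications) check out.
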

\begin{proof}
First assume that $\framme{F}$ is truly playable and standard. It follows from Lemma~\ref{lem:uip} that $\br{\framme{F}}:=(S, \br{E}, R)$ is a standard truly playable enriched $\lucas_1$-frame. By \cite[Proposition 14]{Goranko2013}, for every $\phi \in \Form_{\lang^+}$ the formula $[\varnothing] \phi \leftrightarrow [\Oo]\phi$ is valid in $\br{\framme{F}}$ and
we must prove that  $[\varnothing] \phi \leftrightarrow [\Oo]\phi$ is also valid in $\framme{F}$. 

Assume that there is $\phi \in \Form_{\lang^+}$ and a model $\model{M}=(S,E,R, \Val)$ based on $\framme{F}$ such that $\Val(u, [\varnothing]\phi)<\frac{i}{n}\leq \Val(u, [\Oo]\phi)$ for some $i\leq n$. It follows from homogeneity of $E(u)$ that  $\Val(u, [\varnothing]\tau_{\sfrac{i}{n}}(\phi))=0$, while $\Val(u, [\Oo]\tau_{\sfrac{i}{n}}(\phi))=1$. Moreover, the map $\Val(-, \tau_{\sfrac{i}{n}}(\phi))$ has range in $\lucas_1$. Hence, any map $\Val'\colon S\times \Prop\to \lucas_n$ with $\Val'(w,p)=\Val(w, \tau_{\sfrac{i}{n}}(\phi))$ for every $w\in S$ defines an $\lucas_1$-model based on $\br{\framme{F}}$ that falsifies $[\varnothing]p \leftrightarrow [\Oo]p$, which is the desired contradiction. We can derive a similar contradiction in case there is some $\phi \in \Form_{\lang^+}$ and a model $\model{M}=(S,E,R, \Val)$ based on $\framme{F}$ such that $\Val(u, [\varnothing]\phi)\geq \frac{i}{n}>\Val(u, [\varnothing]\phi)$ for some $i\in \{1, \ldots, n\}$.

Conversely, let $\framme{F}=(S,E,R)$ be a standard playable enriched $\lucas_n$-frame in which $[\varnothing] \phi \leftrightarrow [\Oo]\phi$ is valid for every $\phi \in \Form_{\lang^+}$. Then $\br{\framme{F}}=(S, \br{E}, R)$ is a  standard  playable enriched $\lucas_1$-frame such that  $[\varnothing] \phi \leftrightarrow [\Oo]\phi$ is valid for every $\phi \in \Form_{\lang^+}$. Since $\br{\framme{F}}$ is truly playable by \cite[Proposition 14]{Goranko2013}, $\framme{F}$ is truly playable as well.
\end{proof}

We have to adapt the filtration technique to fit in with the newly introduced language $\lang^+$. The next definition merges Definition \ref{defn:filt} with \cite[Definition 5.3]{Teheux2014}.

\begin{definition}\label{defn:filt02}
Let $\model{M}=(S,E, R, \Val)$ be an enriched  $\lucas_n$-model and $\Gamma$ be a set of formulas closed under subformulas and the unary connectives $\tau_{\oplus}$ and $\tau_{\odot}$. With the notation introduced in Definition \ref{defn:filt}, an enriched $\lucas_n$-model $\model{M}^*=(|S|, E^*, R^*, \Val^*)$ is a $\Gamma$-\emph{filtration} of $\model{M}$ if it satisfies (\ref{it:filt01}) and (\ref{it:filt02}) of Definition \ref{defn:filt} and the following conditions:
\begin{enumerate}\setcounter{enumi}{2}
\item\label{it:bxr01} if $(u,v)\in R$, then $(|u|,|v|) \in R^*$,
\item\label{it:bxr02} if $(|u|, |v|)\in R^*$ and $u([\Oo]\phi)=1$ for every $\phi \in \Gamma$, then $v(\phi)=1$.
\end{enumerate}
\end{definition}

\begin{lemma}\label{lem:filt02}
Let $\model{M}=(S,E, R, \Val)$ be an enriched  $\lucas_n$-model and $\Gamma$ be a set of formulas closed under subformulas and the connectives $\tau_{\oplus}$ and $\tau_{\odot}$. If \[\model{M}\; \models\; \left\{[\Oo](\phi \oplus \phi) \leftrightarrow [\Oo] \phi \oplus [\Oo] \phi,\; [\Oo](\phi \odot \phi) \leftrightarrow [\Oo] \phi \odot [\Oo] \phi \right\}\] and $\model{M}^*=(|S|, E^*, R^*, \Val^*)$ is a $\Gamma$-filtration of $\model{M}$, then 
\begin{equation}\label{eqn:filt02}
\Val(u, \phi)=\Val^*(|u|, \phi)
\end{equation}
 for every $\phi \in \Gamma$.
\end{lemma}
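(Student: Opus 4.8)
The plan is to prove \eqref{eqn:filt02} by induction on the number of connectives in $\phi\in\Gamma$, exactly along the lines of the proof of Lemma \ref{lem:filt}. The clauses for the constant $1$, for a variable $p\in\Prop\cap\Gamma$, and for the connectives $\neg$ and $\iimplies$ are immediate from the inductive definition of $\Val^*$ together with Definition \ref{defn:filt02}(\ref{it:filt01}), while the clause $\phi=[C]\psi$ with $C\in\power N$ is handled verbatim as in Lemma \ref{lem:filt} by invoking condition (\ref{it:filt02}) of Definition \ref{defn:filt}. Hence the only genuinely new case is $\phi=[\Oo]\psi\in\Gamma$, which I would settle by proving the two inequalities $\Val^*(|u|,[\Oo]\psi)\leq\Val(u,[\Oo]\psi)$ and $\Val(u,[\Oo]\psi)\leq\Val^*(|u|,[\Oo]\psi)$ separately. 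Since $\psi\in\Gamma$ by closure under subformulas, the induction hypothesis gives $\Val^*(|w|,\psi)=\Val(w,\psi)$ for every $w\in S$, so that $\Val^*(|u|,[\Oo]\psi)=\min\{\Val(w,\psi)\mid (|u|,|w|)\in R^*\}$ and $\Val(u,[\Oo]\psi)=\min\{\Val(v,\psi)\mid (u,v)\in R\}$.

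For the first inequality I would use condition (\ref{it:bxr01}): whenever $(u,v)\in R$ we also have $(|u|,|v|)\in R^*$, so the index set $\{\Val(v,\psi)\mid (u,v)\in R\}$ defining $\Val(u,[\Oo]\psi)$ is contained in the index set $\{\Val(w,\psi)\mid (|u|,|w|)\in R^*\}$ defining $\Val^*(|u|,[\Oo]\psi)$. A minimum over a larger set can only be smaller, whence $\Val^*(|u|,[\Oo]\psi)\leq\Val(u,[\Oo]\psi)$.

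The converse inequality is the step I expect to be the main obstacle, as it must combine the homogeneity hypothesis on $[\Oo]$ with the second filtration condition (\ref{it:bxr02}). Writing $\frac{i}{n}=\Val(u,[\Oo]\psi)$ with $i\in\{0,\ldots,n\}$, the case $i=0$ is trivial, so assume $i\geq 1$, fix an arbitrary $w$ with $(|u|,|w|)\in R^*$, and aim to show $\Val(w,\psi)\geq\frac{i}{n}$. The key device is the idempotent truncation $\tau_{\sfrac{i}{n}}(\psi)$, which lies in $\Gamma$ because $\Gamma$ is closed under $\tau_\oplus$ and $\tau_\odot$ and $\tau_{\sfrac{i}{n}}$ is a composition of copies of these maps (Definition \ref{defi:termes}). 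The assumed validity of $[\Oo](\phi \oplus \phi) \leftrightarrow [\Oo] \phi \oplus [\Oo] \phi$ and $[\Oo](\phi \odot \phi) \leftrightarrow [\Oo] \phi \odot [\Oo] \phi$ in $\model{M}$ says precisely that $[\Oo]$ commutes with $\tau_\oplus$ and $\tau_\odot$ at the level of the valuation, and hence with their composite $\tau_{\sfrac{i}{n}}$; therefore $\Val(u,[\Oo]\tau_{\sfrac{i}{n}}(\psi))=\tau_{\sfrac{i}{n}}(\Val(u,[\Oo]\psi))=\tau_{\sfrac{i}{n}}(\frac{i}{n})=1$.

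Applying condition (\ref{it:bxr02}) to the formula $\tau_{\sfrac{i}{n}}(\psi)\in\Gamma$, the facts $(|u|,|w|)\in R^*$ and $\Val(u,[\Oo]\tau_{\sfrac{i}{n}}(\psi))=1$ yield $\Val(w,\tau_{\sfrac{i}{n}}(\psi))=1$, that is, $\Val(w,\psi)\geq\frac{i}{n}$ by the definition of $\tau_{\sfrac{i}{n}}$. Since $w$ was an arbitrary successor of $|u|$ under $R^*$, we conclude $\Val^*(|u|,[\Oo]\psi)=\min_w\Val(w,\psi)\geq\frac{i}{n}=\Val(u,[\Oo]\psi)$, and combining this with the first inequality closes the induction. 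The delicate point to get right is the bookkeeping that $[\Oo]$ commutes with the iterated truncation $\tau_{\sfrac{i}{n}}$ rather than merely with a single application of $\tau_\oplus$ or $\tau_\odot$; this is exactly why the two displayed homogeneity identities for $[\Oo]$ appear as hypotheses, mirroring the role of axioms \eqref{eqn:ax01}--\eqref{eqn:ax02} in the proof of Lemma \ref{lem:filt} for the $[C]$ modalities.
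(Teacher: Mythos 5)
Your proof is correct and follows essentially the same route as the paper's: induction with $[\Oo]\psi$ as the only new case, the easy inequality from condition (\ref{it:bxr01}), and the converse via commuting $[\Oo]$ with $\tau_{\sfrac{i}{n}}$ and applying condition (\ref{it:bxr02}) to $\tau_{\sfrac{i}{n}}(\psi)\in\Gamma$. In fact you spell out explicitly the truncation argument that the paper leaves implicit when it asserts its key inequality $\Val(u,[\Oo]\psi)\leq\Val(v,\psi)$ for all $(|u|,|v|)\in R^*$.
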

\begin{proof}
The proof is a routine induction argument on the length of  $\phi\in \Gamma$. Considering the proof of Lemma \ref{lem:filt}, the only case we have to discuss is $\phi=[\Oo]\psi \in \Gamma$.  First, we note that by our assumption on $\model{M}$ and condition (\ref{it:bxr02}) of Definition \ref{defn:filt02},   we have
\begin{equation}\label{eqn:ert01}
\Val(u, [\Oo]\psi)\leq \Val(v, \psi)
\end{equation}
for every $(|u|, |v|)\in R^*$.

Let $u\in S$. Then the definition of $\Val^*$ and the induction hypothesis yield 
\begin{equation}\label{eqn:ert02}
\Val^*(|u|, [\Oo]\psi)=\min \{\Val(v,\psi) \mid (|u|,|v|)\in R^*\}. 
\end{equation}
The inequality $\Val^*(|u|, [\Oo]\psi)\leq \Val(u,[\Oo]\psi)$ holds true since $(|u|,|v|)\in R^*$ for every $(u,v)\in R$. The other inequality is obtained by \eqref{eqn:ert01} and \eqref{eqn:ert02}. 
\end{proof}

\begin{definition}
An \emph{$\lucas_n$-valued truly playable logic} is a subset $\logic{L}$ of $\Form_{\lang^+}$ that is closed under Modus Ponens, Uniform Substitution and Monotonicity for every $[C]$, where $C\in \power N$, and such that $\logic{L}$ contains an axiomatic base of \L ukasiewicz $(n+1)$-valued logic together with axioms (\ref{eqn:ax01})--(\ref{eqn:ax05}) of logic $\logic{P}_{n}$ and the following axioms:
\begin{framed}
\noindent \textbf{The additional axioms of $\lucas_n$-valued truly playble logic}
\begin{enumerate}\setcounter{enumi}{5}
\begin{multicols}{2}
\item\label{eqn:ax08} $[\Oo]1$
\item\label{eqn:ax06} $[\Oo] p \leftrightarrow  [\varnothing] p$,
\item\label{eqn:ax07}  $[\varnothing] (p\iimplies q)\iimplies ([\varnothing] p \iimplies [\varnothing] q)$.
\end{multicols}
\end{enumerate}
\end{framed}

\noindent
We denote by $\logic{TP}_n$ the \emph{smallest  $\lucas_n$-valued truly playable logic}.
\end{definition}
Contrary to the Boolean case, it is not known if axiom (\ref{eqn:ax07})  can be removed from the axiomatization of $\logic{TP}_n$ without changing $\logic{TP}_n$. Nevertheless, the following result holds true.

\begin{lemma} 
$\logic{TP}_n$ is closed under the necessitation rule  for $[\varnothing]$.
\end{lemma}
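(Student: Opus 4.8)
The plan is to reduce necessitation for $[\varnothing]$ to the Monotonicity rule, using the fact that $[\varnothing]1$ is a theorem together with the observation that $1\iimplies\phi$ is provable whenever $\phi$ is. First I would record that $\vdash_{\logic{TP}_n}[\varnothing]1$. This follows by applying Uniform Substitution to axiom (\ref{eqn:ax06}), substituting the constant $1$ for the variable $p$, which yields $\vdash_{\logic{TP}_n}[\Oo]1\leftrightarrow[\varnothing]1$; combining this with axiom (\ref{eqn:ax08}), namely $\vdash_{\logic{TP}_n}[\Oo]1$, and applying Modus Ponens to the left-to-right direction of the biconditional gives $\vdash_{\logic{TP}_n}[\varnothing]1$.

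Next, suppose $\vdash_{\logic{TP}_n}\phi$. Since $\logic{TP}_n$ contains an axiomatic base of \L ukasiewicz $(n+1)$-valued logic, the formula $\phi\iimplies(1\iimplies\phi)$ is a theorem, so Modus Ponens yields $\vdash_{\logic{TP}_n}1\iimplies\phi$. Now I would apply the Monotonicity rule in the case $C=\varnothing$ to the provable implication $1\iimplies\phi$, obtaining $\vdash_{\logic{TP}_n}[\varnothing]1\iimplies[\varnothing]\phi$. Finally, combining this with $\vdash_{\logic{TP}_n}[\varnothing]1$ from the previous step by Modus Ponens gives the desired $\vdash_{\logic{TP}_n}[\varnothing]\phi$.

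There is no real obstacle here: the argument is a short syntactic derivation, and the only point requiring care is the availability of $[\varnothing]1$ as a theorem, which is precisely where the additional axioms (\ref{eqn:ax08}) and (\ref{eqn:ax06}) of $\logic{TP}_n$ enter, since no analogue of a constant-true axiom for $[\varnothing]$ appears directly among the axioms of $\logic{P}_n$. One should also double-check that $\phi\iimplies(1\iimplies\phi)$ is genuinely derivable in the chosen axiomatic base for \L ukasiewicz logic---this is a standard consequence of the weakening axiom $\phi\iimplies(\psi\iimplies\phi)$---so that the passage from $\vdash_{\logic{TP}_n}\phi$ to $\vdash_{\logic{TP}_n}1\iimplies\phi$ is justified.
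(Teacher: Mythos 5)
Your proof is correct and follows essentially the same route as the paper: both arguments apply Monotonicity for $[\varnothing]$ to the provable implication $1\iimplies\phi$ and then discharge the antecedent using axioms (\ref{eqn:ax08}) and (\ref{eqn:ax06}) together with Modus Ponens. The only (cosmetic) difference is that you first establish $\vdash_{\logic{TP}_n}[\varnothing]1$ and stay entirely with the $[\varnothing]$ modality, whereas the paper transfers the implication to $[\Oo]$-form via axiom (\ref{eqn:ax06}) before applying axiom (\ref{eqn:ax08}); your explicit justification of $\vdash_{\logic{TP}_n}1\iimplies\phi$ via the weakening axiom is a welcome extra detail.
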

\begin{proof}
Let $\vdash_{\logic{TP}_n} \phi$. Then $\vdash_{\logic{TP}_n} 1 \iimplies \phi$ and $\vdash_{\logic{TP}_n} [\varnothing] 1 \iimplies [\varnothing] \phi$ by Monotonicity, which is equivalent to $\vdash_{\logic{TP}_n} [\Oo] 1 \iimplies [\Oo] \phi$ by axiom (\ref{eqn:ax06}). The conclusion follows from axiom (\ref{eqn:ax08}) and Modus Ponens.
\end{proof}

We prove completeness of $\logic{TP}_n$ with respect to the standard truly playable enriched $\lucas_n$-models by the technique of the canonical model. We denote by $\free_{\logic{TP}_n}$ the Lindenbaum-Tarski algebra of $\logic{TP}_n$.

\begin{definition}
The \emph{canonical model} of $\logic{TP}_n$ is the enriched $\lucas_n$-model $\model{M}=(\framme{F}, \Val^c)$ with $\framme{F}=(W^c,  E^c, R^c)$, where $W^c=\mathcal{MV}(\free_{\logic{TP}_n}, \lucas_n)$, $E^c$ and $\Val^c$ are as in Definition \ref{defn:canon}, and $R^c$ is defined by \[R^c=\{(u,v) \mid \forall \phi \ u([\Oo]\phi)=1 \implies v(\phi)=1\}.\]
\end{definition}

\begin{proposition}[Truth Lemma]\label{prop:tru02}
The canonical model $\model{M}=(\framme{F}, \Val^c)$ of $\logic{TP}_n$ satisfies  $\Val^c(u, \mu)=u(\mu)$ for every $\mu \in \Form_{\lang^+}$ and every $u \in W^c$.
\end{proposition}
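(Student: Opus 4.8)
The plan is to run an induction on the number of connectives in $\mu$, reusing the proof of Proposition \ref{prop:tru} as far as possible. The base cases $\mu \in \Prop$ and $\mu = 1$, together with the propositional steps $\mu = \neg\psi$ and $\mu = \psi \iimplies \rho$, follow at once from the valuation clauses \eqref{eqn:val01}--\eqref{eqn:val03}. Because $\logic{TP}_n$ contains all the axioms \eqref{eqn:ax01}--\eqref{eqn:ax05} of $\logic{P}_n$ and is closed under the same rules, the coalition case $\mu = [C]\psi$ with $C \in \power N$ goes through word for word as in Proposition \ref{prop:tru}; in particular the separation property of $\free_{\logic{TP}_n}$ (Lemma \ref{lem:seppro}, available since this algebra lies in $\var{MV}_n$) is still at hand. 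The only genuinely new step, and the one I would spend the work on, is $\mu = [\Oo]\psi$.

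For that step I would first apply the induction hypothesis to rewrite
\[
\Val^c(u, [\Oo]\psi) = \min\{v(\psi) \mid (u,v) \in R^c\},
\]
and then establish, for each $i \in \{1, \ldots, n\}$, that $\Val^c(u, [\Oo]\psi) \geq \frac{i}{n}$ if and only if $u([\Oo]\psi) \geq \frac{i}{n}$, which determines the value. The starting observation is that $[\Oo]$ inherits monotonicity and homogeneity from $[\varnothing]$ through axiom \eqref{eqn:ax06}, so that $\vdash_{\logic{TP}_n} [\Oo]\tau_{\sfrac{i}{n}}(p) \leftrightarrow \tau_{\sfrac{i}{n}}([\Oo]p)$ (cf.\ Remark \ref{rem:equiv}). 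The easy direction is then immediate: if $u([\Oo]\psi) \geq \frac{i}{n}$ then $u([\Oo]\tau_{\sfrac{i}{n}}(\psi)) = 1$, so every $v$ with $(u,v) \in R^c$ satisfies $v(\tau_{\sfrac{i}{n}}(\psi)) = 1$ by the definition of $R^c$, i.e.\ $v(\psi) \geq \frac{i}{n}$, whence the minimum is at least $\frac{i}{n}$.

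The converse direction demands a witness world and is where the real content lies. Assuming $u([\Oo]\psi) < \frac{i}{n}$, equivalently $u([\Oo]\tau_{\sfrac{i}{n}}(\psi)) = 0$, I must produce $v \in W^c$ with $(u,v) \in R^c$ and $v(\psi) < \frac{i}{n}$. I would consider the set $G = \{\phi \mid u([\Oo]\phi) = 1\}$ and check that it is an implicative filter of $\free_{\logic{TP}_n}$: it contains $1$ by axiom \eqref{eqn:ax08}, it is upward closed under provable implication by monotonicity of $[\Oo]$, and it is closed under modus ponens by the normality axiom \eqref{eqn:ax07} transferred to $[\Oo]$ via \eqref{eqn:ax06}. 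Writing $a = \tau_{\sfrac{i}{n}}(\psi)$, a Boolean element, I would then verify that the implicative filter $H$ generated by $G \cup \{\neg a\}$ is proper: were it not, closure of $G$ under $\odot$ would yield some $g \in G$ with $g \leq a$, and monotonicity of $[\Oo]$ would force $u([\Oo]a) = 1$, contradicting $u([\Oo]a) = 0$. As $\free_{\logic{TP}_n}/H$ is a nontrivial algebra of $\var{MV}_n$, Lemma \ref{lem:seppro} supplies a homomorphism $v \colon \free_{\logic{TP}_n} \to \lucas_n$ with $H \subseteq v^{-1}(1)$; then $G \subseteq v^{-1}(1)$ gives $(u,v) \in R^c$, while $v(\neg a) = 1$ gives $v(a) = 0$, i.e.\ $v(\psi) < \frac{i}{n}$. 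By the induction hypothesis $v$ is the required witness, so the minimum drops below $\frac{i}{n}$.

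I expect the main obstacle to be exactly this witness construction: proving that $G$ is an implicative filter and that adjoining $\neg a$ keeps it proper. This is the only place where the modal axioms \eqref{eqn:ax06}--\eqref{eqn:ax08} governing the new operator $[\Oo]$, as opposed to the coalition axioms, are indispensable, and it is the step that genuinely combines the Kripke-relational reading of $R^c$ with the MV separation theorem.
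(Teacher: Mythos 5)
Your proof is correct and follows essentially the same route as the paper: the paper runs the same induction, reusing Proposition \ref{prop:tru} for all cases except $\mu=[\Oo]\psi$, and for that one case it simply cites \cite[Proposition 5.6]{Teheux2012} (or \cite[Proposition 5.5]{Hansoul2006}) instead of giving the argument. Your witness construction for the hard direction --- the filter $G=\{\phi \mid u([\Oo]\phi)=1\}$ (using axioms \eqref{eqn:ax08} and \eqref{eqn:ax07} transferred through \eqref{eqn:ax06}), properness after adjoining $\neg\tau_{\sfrac{i}{n}}(\psi)$ via idempotence, and separation by Lemma \ref{lem:seppro} --- is precisely the normal-modality argument contained in those references, so you have in effect supplied the details the paper outsources.
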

\begin{proof}
The proof is carried out by induction on the number of connectives in $\mu$. The only case not considered in the proof of Proposition \ref{prop:tru} is $\mu=[\Oo]\phi$. However, this case was considered in the proof of \cite[Proposition 5.6]{Teheux2012} or in the proof of \cite[Proposition 5.5]{Hansoul2006}. 
\end{proof}

\noindent
It is worth noticing that Proposition \ref{prop:tru02} relies on the fact that the modality $[\Oo]$ is normal.

\begin{proposition}\label{prop:filto}
Let $\mu \in \Form_{\lang^+}$ and let $\model{M}^*=(|W^{c}|, E^{*}, \Val^{*})$ be an intermediate $\Cl(\mu)$-filtration of $(W^c,E^c, \Val^c)$. If \[R^*=\{(|u|,|v|) \mid \forall \phi \in \Cl(\mu)\ u([\Oo]\phi)=1 \implies v(\phi)=1\},\] then the model $\model{M}^+=(|W^c|, E^+, R^*, \Val^*)$ is a standard truly playable $\Cl(\mu)$-filtration of the canonical model $\model{M}=(W^c, E^c, R^c, \Val^c)$ of $\logic{TP}_n$,  where $E^+$ is obtained from $E^*$ as in Proposition \ref{prop:filt}.
\end{proposition}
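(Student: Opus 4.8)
The plan is to leverage Proposition~\ref{prop:filt}, which already guarantees that the unenriched part $(|W^c|, E^+, \Val^*)$ is a playable $\Cl(\mu)$-filtration of $(W^c, E^c, \Val^c)$; here the canonical model is playable as in Lemma~\ref{lem:goo}, whose proof uses only axioms \eqref{eqn:ax01}--\eqref{eqn:ax05} and so applies verbatim to $\logic{TP}_n$. Consequently $E^+$ is playable and conditions (\ref{it:filt01})--(\ref{it:filt02}) of Definition~\ref{defn:filt} hold, so what remains is to verify the two relational conditions of Definition~\ref{defn:filt02}, to establish standardness, and to strengthen playability to true playability. True playability is the quickest point: the values $(u(\phi))_{\phi\in\Cl(\mu)}$ range over a finite set up to provable equivalence, so the quotient $|W^c|$ is finite, and every playable $\lucas_n$-valued effectivity function over a finite outcome set is truly playable; hence each $E^+(|u|)$ is truly playable.

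For the relational filtration conditions, condition (\ref{it:bxr02}) is immediate, being a verbatim restatement of the definition of $R^*$. For condition (\ref{it:bxr01}), if $(u,v)\in R^c$ then the defining implication of $R^c$, quantified over \emph{all} formulas, specializes to the formulas of $\Cl(\mu)$, which yields $(|u|,|v|)\in R^*$ at once.

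The crux is standardness, i.e. the identity $R^*=\{(|u|,|v|)\mid E^+(|u|)(\varnothing,\neg\chi_{\{|v|\}})=0\}$ required by Definition~\ref{defn:lplus}; proving it simultaneously shows that the representative-level condition defining $R^*$ depends only on $|u|$ and $|v|$, so that $R^*$ is well defined. I would prove, for all $u,v\in W^c$, the equivalence between the condition $\forall\phi\in\Cl(\mu)\colon u([\Oo]\phi)=1\Rightarrow v(\phi)=1$ and the equation $E^+(|u|)(\varnothing,\neg\chi_{\{|v|\}})=0$. Since $\neg\chi_{\{|v|\}}$ is idempotent and $E^+(|u|)$ is homogeneous, its value lies in $\{0,1\}$, and by the construction of $E^+$ from $\br{E^*}$ (Lemma~\ref{lem:new}) it equals $1$ exactly when $E^*(|u|)(\varnothing,\neg\chi_{\{|v|\}})=1$; by \eqref{eqn:int01} this occurs iff some $\phi\in\Cl(\mu)$ satisfies $|\Val^c(-,\phi)|\leq\neg\chi_{\{|v|\}}$ and $E^c(u)(\varnothing,\Val^c(-,\phi))=1$. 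Given such a $\phi$, the Truth Lemma (Proposition~\ref{prop:tru02}) and axiom \eqref{eqn:ax06} give $u([\Oo]\phi)=u([\varnothing]\phi)=1$, while $|\Val^c(-,\phi)|\leq\neg\chi_{\{|v|\}}$ forces $v(\phi)=\Val^c(v,\phi)=0$, witnessing the failure of the $R^*$-condition. Conversely, if the $R^*$-condition fails, I would pick $\phi\in\Cl(\mu)$ with $u([\Oo]\phi)=1$ and $v(\phi)\neq 1$ and replace it by the idempotent $\tau_1(\phi)\in\Cl(\mu)$ (legitimate because $\tau_1$ is a composition of $\tau_\oplus,\tau_\odot$, which $\Cl(\mu)$ absorbs through $\neg,\iimplies$): homogeneity preserves $E^c(u)(\varnothing,\Val^c(-,\tau_1\phi))=1$, while $\Val^c(v,\tau_1\phi)=\tau_1(v(\phi))=0$ gives $|\Val^c(-,\tau_1\phi)|\leq\neg\chi_{\{|v|\}}$, so \eqref{eqn:int01} yields $E^*(|u|)(\varnothing,\neg\chi_{\{|v|\}})=1$.

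The main obstacle is precisely this standardness equivalence. It is the unique place where the truly-playable-specific axiom \eqref{eqn:ax06}, identifying $[\Oo]$ with $[\varnothing]$, must be combined with homogeneity and the intermediate-filtration formula \eqref{eqn:int01}; the delicate point is to pass to idempotent (Boolean-skeleton) witnesses via $\tau_1$, so that the many-valued thresholds collapse onto the Boolean membership condition encoded by $\neg\chi_{\{|v|\}}$, and to observe that this collapse is exactly what makes the otherwise representative-dependent definition of $R^*$ well defined.
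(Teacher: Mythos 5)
Your proof is correct and takes essentially the same route as the paper's: Proposition \ref{prop:filt} together with finiteness of the quotient $|W^c|$ yields (true) playability, the definitions of $R^c$ and $R^*$ give the two relational filtration conditions, and standardness is established by the same equivalence combining \eqref{eqn:int01}, homogeneity/idempotence via the $\tau$ maps, the Truth Lemma (Proposition \ref{prop:tru02}) and axiom (\ref{eqn:ax06}). Your two extra observations—that Lemma \ref{lem:goo} transfers verbatim to the canonical model of $\logic{TP}_n$, and that the standardness equivalence shows $R^*$ is independent of the choice of representatives—merely make explicit points the paper leaves implicit.
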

\begin{proof}
By Proposition \ref{prop:filt} we know that $\model{M}$ is playable. As $\model{M}$ is finite, it is truly playable. By Proposition \ref{prop:filt} and by definition of $R^*$, the model $\model{M}^+$ is a~$\Cl(\mu)$-filtration of $\model{M}$ in the sense of Definition  \ref{defn:filt02}.

It remains to prove that $\model{M}^+$ is standard. First, assume that $u,v \in W^c$ and $E^+(|u|)(\varnothing, \neg \chi_{\{|v|\}})=0$. It follows from the definition of $E^+$ and $E^*$ that \[E(u)(\varnothing, \Val(-,\phi))=0\] for every $\phi \in \Cl(\mu)$ such that $\Val(v,\phi)=0$. We conclude that $(|u|, |v|)\in R^*$ by the definition of $R^*$.

Let $E(|u|, |v|)\in R^*$. We will prove that $E^+(|u|)(\varnothing, \neg \chi_{\{|v|\}})=0$. By way of contradiction, assume that there is $\phi \in \Cl(\mu)$ such that $\Val(v,\phi)=0$ and \[E(u)(\varnothing, \Val(-,\phi))=\frac{i}{n}>0.\] Since $\chi_{\{|v|\}}$ is idempotent and $E(u)$ is homogeneous, we may assume $i=n$. It follows from Definition \ref{defn:lplus} and  Proposition  \ref{prop:tru02} that $u([\varnothing]\phi)=1$. By axiom (\ref{eqn:ax06}) of $\logic{TP}_n$, we deduce $u([\Oo]\phi)=1$. The last identity is a contradiction since $(|u|,|v|)\in R^*$ and $v(\phi)=0$.
\end{proof}
\begin{theorem}[Completeness of $\logic{TP}_n$]\label{thm:comp02}  For any $\phi \in \Form_{\lang^+}$, the following assertions are equivalent:
\begin{enumerate}
\item\label{it:pqq01} $\vdash_{\logic{TP}_n} \phi$.
\item\label{it:pqq02} $\phi$ is true in every standard truly playable enriched  $\lucas_n$-model.
\item\label{it:pqq03} $\phi$ is true in every finite standard playable enriched $\lucas_n$-model.
\end{enumerate}
\end{theorem}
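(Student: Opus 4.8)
The plan is to prove the cycle of implications (\ref{it:pqq01})~$\Rightarrow$~(\ref{it:pqq02})~$\Rightarrow$~(\ref{it:pqq03})~$\Rightarrow$~(\ref{it:pqq01}), leaning on the canonical model of $\logic{TP}_n$ together with the filtration produced in Proposition~\ref{prop:filto}. The implication (\ref{it:pqq01})~$\Rightarrow$~(\ref{it:pqq02}) is soundness, established by induction on derivations just as in Lemma~\ref{lem:ovx}: one checks that every axiom is true in each standard truly playable enriched $\lucas_n$-model and that Modus Ponens, Uniform Substitution and Monotonicity preserve truth. Axioms (\ref{eqn:ax01})--(\ref{eqn:ax05}) are validated exactly as in Lemma~\ref{lem:ovx}, using only that the underlying $\lucas_n$-frame is playable. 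Axiom (\ref{eqn:ax08}) holds because $\Val(u,[\Oo]1)=\min\{1\mid(u,v)\in R\}=1$; axiom (\ref{eqn:ax06}) is valid precisely because the frame is standard and truly playable, by the characterization established in the Proposition preceding Definition~\ref{defn:filt02}; and axiom (\ref{eqn:ax07}) is the normality schema for $[\varnothing]$, which holds because the $\min$-interpreted box $[\Oo]$ is normal and axiom (\ref{eqn:ax06}) transfers this to $[\varnothing]$.

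The implication (\ref{it:pqq02})~$\Rightarrow$~(\ref{it:pqq03}) is immediate. Recall that on a finite outcome space every playable $\lucas_n$-valued effectivity function is truly playable; hence a finite standard playable enriched $\lucas_n$-model is already a finite standard truly playable one. The model class of (\ref{it:pqq03}) is therefore contained in that of (\ref{it:pqq02}), and truth transfers without further argument.

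The core of the proof is (\ref{it:pqq03})~$\Rightarrow$~(\ref{it:pqq01}), which I would argue by contraposition. Assume $\not\vdash_{\logic{TP}_n}\phi$, so the class of $\phi$ in $\free_{\logic{TP}_n}$ differs from $1$. As $\free_{\logic{TP}_n}\in\var{MV}_n$, the separation property of Lemma~\ref{lem:seppro} yields some $u\in W^c=\mathcal{MV}(\free_{\logic{TP}_n},\lucas_n)$ with $u(\phi)\neq 1$, and the Truth Lemma (Proposition~\ref{prop:tru02}) gives $\Val^c(u,\phi)=u(\phi)\neq 1$, so the canonical model falsifies $\phi$. The canonical model is neither finite nor manifestly standard, so I would finitize it: applying Proposition~\ref{prop:filto} with $\mu=\phi$ produces a finite standard truly playable $\Cl(\phi)$-filtration $\model{M}^+$ of the canonical model. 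The homogeneity premises of Lemma~\ref{lem:filt02} are satisfied automatically, since the lattice endomorphisms $\tau_\oplus$ and $\tau_\odot$ of $\lucas_n$ commute with the $\min$ defining $[\Oo]$; hence Lemma~\ref{lem:filt02} yields $\Val^+(|u|,\phi)=\Val^c(u,\phi)\neq 1$. Thus $\model{M}^+$ is a finite standard truly playable---in particular finite standard playable---enriched $\lucas_n$-model in which $\phi$ is not true, contradicting (\ref{it:pqq03}).

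The genuinely hard work lies in the ingredients already in place before the statement, not in the assembly above. The main obstacle is Proposition~\ref{prop:filto}, which must simultaneously finitize the canonical model---so that playability is upgraded to true playability---and realize the standard accessibility relation $R^*$; the Truth Lemma (Proposition~\ref{prop:tru02}), which depends on the normality of $[\Oo]$, is the other load-bearing piece. The subtle structural point is that one cannot read a countermodel off the canonical model directly, because it need not be standard or truly playable, so the passage through the filtration is indispensable rather than a mere convenience.
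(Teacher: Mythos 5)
Your proof is correct and takes essentially the same approach as the paper: soundness by induction on derivations, the model-class inclusion for (\ref{it:pqq02})~$\Rightarrow$~(\ref{it:pqq03}) via the fact that finite playable effectivity functions are automatically truly playable, and the hard direction through the canonical model (Proposition~\ref{prop:tru02}) finitized by the filtration of Proposition~\ref{prop:filto} and transferred back by Lemma~\ref{lem:filt02}. If anything, your assembly is more careful than the paper's own two-line argument: you close the cycle with (\ref{it:pqq03})~$\Rightarrow$~(\ref{it:pqq01}) (the paper phrases its filtration step as (\ref{it:pqq02})~$\Rightarrow$~(\ref{it:pqq01}), leaving (\ref{it:pqq03}) formally unconnected back to (\ref{it:pqq01}), although the same argument applies since the filtrated model is finite, standard and playable), and you make explicit both the appeal to Lemma~\ref{lem:filt02} and the verification of its $[\Oo]$-homogeneity hypotheses, which the paper uses only tacitly.
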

\begin{proof}
It is clear that (\ref{it:pqq02}) $\implies$ (\ref{it:pqq03}). Moreover, (\ref{it:pqq01}) $\implies$ (\ref{it:pqq02}) can be proved by a~straightforward induction argument. To prove (\ref{it:pqq02}) $\implies$ (\ref{it:pqq01}), we obtain by Proposition \ref{prop:filto} and Proposition \ref{prop:tru02} that $\phi$ is true in the canonical model of~$\logic{TP}_n$, which means $\vdash_{\logic{TP}_n} \phi$.
\end{proof}

\section{Conclusions and future research}\label{sec:further}

In this paper we have studied some generalizations of Pauly's Coalition Logic in modal extensions of \L ukasiewicz logic. Below we list some ideas for possible applications and topics for further investigations.
\begin{enumerate}
\item The gain of expressive power owing to the many-valued modal language that is used could be exploited to encode some properties of strategic or voting games, such as, for instance, the distribution of power among coalitions in~weighted voting games.  

\item In modal extensions of $(n+1)$-valued \L ukasiewicz logics, two types of relational structures can naturally be considered, giving rise to two types of~completeness results \cite{Teheux2012}. On the one hand, there is the class of frames (structures with  binary accessibility relations), while, on the other hand, there is the class of $\lucas_n$-frames. The latter are frames in which the set of~allowed truth values in a world is a prescribed subalgebra of $\lucas_n$ for every world of the frame. Such a prescription could also be considered  in the context of $\lucas_n$-valued (truly) playable logics, where the neighborhood semantics replace the relational ones. The possible aim is to obtain new completeness results with respect to this enriched semantics.

\item We have based our generalizations of Coalition Logic on modal extensions of~\L ukasiewicz  logic. Other families of many-valued logics could be considered as a basis for many-valued versions of  Coalition Logic. For example, it would be interesting to compare expressive power between the language developed in this paper and a many-valued coalitional language based on~modal extensions of G\"odel logics \cite{Metcalfe2009}. 
\item Coalition Logic is among many formal calculi developed to model the deductive aspects of games. Other systems have been considered, such as ATL \cite{Alur2002, Alur1997} and its epistemic extensions \cite{vanderhoek2003}. A natural task could be to design the many-valued versions of those calculi in order to capture wider classes of games or protocols in which errors are allowed; see \cite{Teheux2014}, for instance.

\item We did not consider the complexity issue of the satisfiability problem for the many-valued modal languages and models introduced in this paper. This topic becomes a subject of further investigation although we conjecture that the problem is \textsc{PSPACE}-hard as in the Boolean case \cite{Pauly2002}, since the number of possible truth values remains finite.

\end{enumerate}

\appendix 
\section{Representation of Boolean effectivity functions }\label{appendix:TP}
Let $N=\{1,\dots,k\}$ be a finite set of players with $k\geq 2$ and $S$ be a (possibly infinite) set of outcomes such that $|S|\geq 2$. A family $\mathbf{B}\subseteq \power S$ that contains $S$ is called a \emph{structure} on $S$. We say that $\mathbf{B}$ is \emph{closed under finite intersections} if $X_1,\ldots, X_j \in \mathbf{B}$ implies $\bigcap_{i=1}^j X_i \in \mathbf{B}$ for every $j\in \mathbb{N}$. An effectivity function $E\colon\power N \to \power\power S$ is said to be \emph{compatible} with a structure $\mathbf{B}$ on $S$ if $E(C)\subseteq \mathbf{B}$ for every $C \in \power N$, $E$ has  liveness and safety, $E(\varnothing)=\{S\}$, and $E(N)=\mathbf{B}\setminus \{\varnothing\}$.  An effectivity function is \emph{outcome monotonic with respect to $\mathbf{B}$} when the following implication holds true for every $C\in \power N$: if $X\in E(C)$, $X\subseteq Y$ and $Y\in \mathbf{B}$, then $Y\in E(C)$.

\begin{theorem}[{\cite[Theorem 3.5*]{Peleg98}}]\label{thm:pel}
Let $\mathbf{B}$ be a structure on $S$ closed under finite intersections and $E\colon \power N \to \power\power S$ be an effectivity function compatible with $\mathbf{B}$. Then the following conditions are equivalent:
\begin{enumerate}
\item $E$ is superadditive and outcome monotonic w.r.t. $\mathbf{B}$.
\item There exists a game form $G=(N, \{\Sigma_i \mid i \in N\}, S, o)$ satisfying $E(C)=H_G(C) \cap \mathbf{B}$ for every $C\in \power N$.
\end{enumerate}
\end{theorem}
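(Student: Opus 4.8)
The plan is to prove the two implications separately, carrying the standing hypothesis that $E$ is compatible with $\mathbf{B}$; the forward direction is a short inheritance argument, whereas the converse requires building a game form by hand. For (2) $\implies$ (1), suppose $E(C)=H_G(C)\cap \mathbf{B}$ for some game form $G$. The effectivity function $H_G$ of any game form is superadditive and outcome monotonic --- this is the Boolean instance ($n=1$) of the computations in the proof of Proposition \ref{prop:nxq}, with $\wedge$ read as $\cap$. Both properties descend to $E$: superadditivity survives intersection with $\mathbf{B}$ because $\mathbf{B}$ is closed under finite intersections, so the witnessing set $X\cap Y$ again lies in $\mathbf{B}$; and outcome monotonicity with respect to $\mathbf{B}$ is immediate, since its conclusion is only demanded for supersets $Y$ belonging to $\mathbf{B}$.

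For the substantive direction (1) $\implies$ (2), I would construct $G$ so that each player advertises, for every coalition to which it belongs, a set that coalition is presumed to enforce. Concretely, let a strategy of player $i$ be a tuple $\big((X^i_C)_{C\ni i},\, m_i,\, s_i\big)$, where $X^i_C\in E(C)$ for each $C\ni i$, the integer $m_i\in\mathbb{N}$ serves as a tie-breaker, and $s_i\in S$ is a fallback state. Call a coalition $C$ \emph{active} at a given profile when its members agree, $X^j_C=X$ for a common $X$ and all $j\in C$; the outcome function $o$ then outputs a single state lying in the target sets of all active coalitions. Such a state exists because superadditivity together with liveness guarantees that the targets of any family of pairwise disjoint active coalitions have nonempty intersection, by iterating $X_1\cap X_2\in E(C_1\cup C_2)$ and invoking $\varnothing\notin E(\,\cdot\,)$; the integers $m_i$ and fallbacks $s_i$ then single out one such state whenever several remain.

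The two inclusions close the argument. For $E(C)\subseteq H_G(C)\cap\mathbf{B}$: given $X\in E(C)$, let every $j\in C$ set $X^j_C=X$ and disagree on the target of every coalition meeting $C$, so that the only rival active coalitions lie inside $\overline{C}$; superadditivity then forces the chosen outcome into $X\cap X'\subseteq X$ for every rival target $X'$, whence $X\in H_G(C)$, and $X\in\mathbf{B}$ since $E(C)\subseteq\mathbf{B}$. For the converse $H_G(C)\cap\mathbf{B}\subseteq E(C)$: if $Y\in\mathbf{B}$ is forced by some strategy of $C$, I would let $\overline{C}$ exploit its free play and its integers to deactivate every coalition not contained in $C$ and to compel $C$ itself to be the governing active coalition; the common target $X\in E(C)$ to which $C$'s strategy commits must then satisfy $X\subseteq Y$, and outcome monotonicity with respect to $\mathbf{B}$ upgrades this to $Y\in E(C)$. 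The extreme cases are handled by compatibility: $E(\varnothing)=\{S\}$ pins the empty coalition to $S$, while $E(N)=\mathbf{B}\setminus\{\varnothing\}$ together with liveness and safety covers the grand coalition.

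The main obstacle is the converse inclusion --- calibrating the tie-breaking device so that the complementary coalition $\overline{C}$ can always ``escape'' any member of $\mathbf{B}$ that $C$ does not genuinely enforce, while $C$ retains the power to enforce each set of $E(C)$. Making this escape clause interact correctly with the maximality condition $E(N)=\mathbf{B}\setminus\{\varnothing\}$, and keeping the outcome function well-defined for an arbitrary, possibly infinite, outcome space $S$, is the delicate point of the whole construction.
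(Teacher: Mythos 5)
A preliminary remark on the comparison itself: the paper contains no proof of this statement --- it is imported verbatim from Peleg (\cite[Theorem 3.5*]{Peleg98}) and used as a black box in Appendix \ref{appendix:TP} to rederive Theorem \ref{thm:Goranko}. Your attempt must therefore be measured against the standard representation construction (Peleg's, also used in \cite{Goranko2013}), and against it there is a genuine gap. Your direction (2) $\implies$ (1) is correct as written. The problem is the construction in (1) $\implies$ (2): because each player $i$ announces a target $X^i_C$ for \emph{every} coalition $C\ni i$, active coalitions need not be pairwise disjoint --- indeed every singleton $\{j\}$ is vacuously active at every profile --- while superadditivity controls only disjoint families. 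Concretely, take $N=\{1,2\}$, $S=\{s,t\}$, $\mathbf{B}=\power S$, $E(\varnothing)=E(\{2\})=\{S\}$, $E(\{1\})=\{\{s\},S\}$, $E(N)=\mathbf{B}\setminus\{\varnothing\}$; this $E$ is compatible, superadditive and outcome monotonic w.r.t.\ $\mathbf{B}$. At the profile where player $1$ announces $\{s\}$ for $\{1\}$ and both players announce $\{t\}$ for $N$, the coalitions $\{1\}$ and $N$ are both active with disjoint targets, so no state lies ``in the target sets of all active coalitions'': your outcome function is not well-defined. Your existence argument silently replaces ``all active coalitions'' by ``any family of pairwise disjoint active coalitions''; these are not the same thing, and the discrepancy is fatal.

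The patches you sketch for the two inclusions do not close this hole. For $E(C)\subseteq H_G(C)$, a member $j$ with $C\cap C'=\{j\}$ cannot ``disagree'' its way out of $C'$ becoming active: it moves simultaneously with the players of $\overline{C}$ and cannot guarantee that its announcement for $C'$ differs from theirs. For the converse inclusion, you assume the witnessing strategy of $C$ commits $C$ itself to a common target $X$; but that strategy is arbitrary --- the members of $C$ may split into several active subcoalitions $T_1,\dots,T_m$ (or none), and the forcing then goes through $X_{T_1}\cap\dots\cap X_{T_m}$, which requires iterated superadditivity, coalition monotonicity (derivable from superadditivity plus safety, but it must be invoked since it is not a hypothesis), and a fully specified tie-break escape argument before outcome monotonicity w.r.t.\ $\mathbf{B}$ can be applied. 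The standard fix for all of these issues is to let each player announce a \emph{single} coalition it wishes to join, together with one target for that coalition (plus the tie-breaker and fallback state you already have): formed coalitions are then pairwise disjoint by construction, the outcome function becomes well-defined via iterated superadditivity and liveness, and both inclusions can be carried out along the lines you indicate. Your architecture (coalition formation, tie-breakers, fallback states, surjectivity of $o$ to handle $E(\varnothing)=\{S\}$ and $E(N)=\mathbf{B}\setminus\{\varnothing\}$) is the right one, but the strategy space you chose breaks the construction at its first step.
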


As announced in Section \ref{sect:eff}, we will prove that the characterization of effectivity functions generated by game forms from \cite[Theorem 1]{Goranko2013} can be obtained as a~consequence of  Peleg's Theorem~3.5* in \cite{Peleg98}. We restate the theorem for reader's convenience. We use the notion of true playability introduced in Definition \ref{def:BooleanPlay}.

\begin{theorem}[{\cite[Theorem 1]{Goranko2013}}]
	Let $E\colon \power N \to \power\power S$ be an effectivity function. There exists a game form $G=(N, \{\Sigma_i \mid i \in N\}, S, o)$ satisfying $E=H_{G}$ if and only if $E$ is truly playable.
\end{theorem}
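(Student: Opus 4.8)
The plan is to prove the nontrivial ``if'' direction by reducing to Peleg's Theorem~\ref{thm:pel}, the ``only if'' direction being a routine verification. For the latter, I would simply observe that for any game form $G$ the effectivity function $H_G$ (Definition~\ref{def:GFeffect}) is superadditive, outcome monotonic, $N$-maximal and has liveness and safety --- these are the $n=1$ instances of the computations in Proposition~\ref{prop:nxq} --- and that $H_G(\varnothing)=\{X\subseteq S\mid X\supseteq \mathrm{ran}(o)\}$ is the principal filter generated by $\mathrm{ran}(o)$, so that $H_G$ is truly playable in the sense of Definition~\ref{def:BooleanPlay}.

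For the ``if'' direction, suppose $E\colon \power N\to\power\power S$ is truly playable. The obstruction to applying Theorem~\ref{thm:pel} directly is that Peleg's compatibility notion requires $E(\varnothing)=\{S\}$, i.e.\ an outcome function onto all of $S$, whereas in general $E(\varnothing)$ is only a principal filter. Let $R\subseteq S$ be its generator, so that $E(\varnothing)=\{X\subseteq S\mid X\supseteq R\}$; liveness forces $R\neq\varnothing$. The key lemma I would establish first is that $R$ carries all the information of $E$, in the precise sense that
\[
X\in E(C)\iff X\cap R\in E(C)\qquad(C\in\power N,\ X\subseteq S).
\]
The implication ``$\Leftarrow$'' is outcome monotonicity, and ``$\Rightarrow$'' follows by applying superadditivity to the disjoint pair $C$ and $\varnothing$, using $R\in E(\varnothing)$ to intersect any $X\in E(C)$ down to $X\cap R$.

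Armed with this, I would pass to the outcome set $R$ and set $\tilde E(C)=E(C)\cap\power R$ together with the structure $\mathbf{B}=\power R$ (trivially closed under finite intersections). A short check shows $\tilde E$ is compatible with $\mathbf{B}$: liveness and safety transfer from $E$ (using $R\in E(C)$, which is the case $X=S$ of the displayed equivalence), $\tilde E(\varnothing)=\{R\}$ is immediate from the choice of $R$, and $\tilde E(N)=\power R\setminus\{\varnothing\}$ follows from $N$-maximality together with liveness. Likewise superadditivity and outcome monotonicity of $\tilde E$ relative to $\mathbf{B}$ are inherited from $E$. Theorem~\ref{thm:pel} then yields a game form $\tilde G$ over $R$ with $\tilde E(C)=H_{\tilde G}(C)\cap\power R=H_{\tilde G}(C)$ for every $C$. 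Re-embedding $R\hookrightarrow S$ turns $\tilde G$ into a game form $G$ over $S$ whose outcome function has range $R$, and a final computation gives $X\in H_G(C)\iff X\cap R\in H_{\tilde G}(C)=\tilde E(C)\iff X\in E(C)$, where the last step is again the displayed equivalence; hence $E=H_G$.

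The main obstacle is conceptual rather than computational: it is recognizing that the principal-filter hypothesis on $E(\varnothing)$ is exactly what is needed to cut $S$ down to the ``effective'' outcome set $R$ on which Peleg's surjectivity-type compatibility condition $\tilde E(\varnothing)=\{R\}$ holds, and then verifying that this restriction loses no information. Once the equivalence $X\in E(C)\iff X\cap R\in E(C)$ is in place, the remaining compatibility and monotonicity checks are straightforward, and Peleg's theorem does the real work of producing the game form.
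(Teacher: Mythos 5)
Your proof is correct and follows essentially the same route as the paper's: both directions reduce the ``if'' part to Peleg's Theorem~\ref{thm:pel} by cutting $S$ down to the generator of the principal filter $E(\varnothing)$ (your $R$, the paper's $Z=\bigcap E(\varnothing)$), verifying compatibility of the restricted effectivity function with the structure $\power R$, and then re-extending the resulting game form to $S$. Your only cosmetic difference is packaging the restriction argument as the standalone equivalence $X\in E(C)\iff X\cap R\in E(C)$, which the paper instead carries out inline via superadditivity with the empty coalition and outcome monotonicity.
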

 \begin{proof}
 As for the first implication, it is easy to see that $H_{G}$ is playable. Set $Z=\{z\in S \mid z=o(\sigma_{N}) \text{ for some strategy profile $\sigma_{N}$}\}$. Then $Z\in H_{G}(\emptyset)$. Clearly, for any set of outcomes $X\subseteq S$ we have  $X\in H_{G}(\emptyset)$ if and only if $X$ contains the ``range'' $Z$ . This means that $H_{G}(\emptyset)$ is the principal filter generated by $Z$ and $H_{G}$ is truly playable. 
 
In order to show the converse implication, let $E$ be truly playable and put $Z=\bigcap E(\varnothing)$. Since $E$ has safety, $Z\neq \varnothing$, and since $E(\varnothing)$ is a principal filter, $E(\varnothing)=\{X  \in \power S \mid Z \subseteq X\}$. Consider the mapping $E'\colon\power N \to \power \power Z$ defined as follows:
\[
\text{$X \in E'(C)$ if $X\in E(C)$, for every $X\in \power Z$ and every $C\in \power N$.}
\]

We claim that $E'$ is compatible with the structure $\power Z$ on $Z$. Indeed, it follows that $E'(\varnothing)=\{Z\}$ and $E'(C)\subseteq \power Z\setminus\{\varnothing\}$ for every $C\in \power N$. By monotonicity, $Z\in E'(C)$ for every $C \in \power N$. It remains to prove that $\power Z\setminus \{\varnothing\}\subseteq E(N)$. Let $\varnothing \neq X \subseteq Z$. If $X=Z$, then we already know that $X\in E(N)$. Otherwise, $X$ is a~nonempty proper subset of $Z$ and thus $X\notin E(\varnothing)$. We obtain by $N$-maximality of $E$ that $\overline{X}\in E(N)$ and by superadditivity that $\overline{X}\cap Z\in E(N)$. We have proved that the complement in $Z$ of any nonempty proper subset of $Z$ is in $E(N)$, which yields $\power Z\setminus \{\varnothing\}\subseteq E(N)$. We can conclude that $E'(N)=\power Z \setminus \{\emptyset\}$.

By Theorem \ref{thm:pel}, there is a game form $G'=(N, \{\Sigma_i \mid i \in N\}, Z, o)$ such that $E'=H_{G'}$. Put $G=(N, \{\Sigma_i \mid i \in N\}, S, o)$. We will show that 
$E=H_{G}$. To this end, let $C\in \power N$ and $X \in E(C)$. By superadditivity, $X \cap Z \in E(C)$, therefore $X\cap Z \in E'(C)=H_ {G'}(C)$. By the definition of $H_{G'}$ and $H_G$, we get $X\in H_{G}(C)$. For the converse inclusion $H_{G}\subseteq E$, assume that $X\in \power S$ belongs to $H_{G}(C)$. By the definition of $H_G$, there exists $\sigma_C$ such that $o(\sigma_C\sigma_{\overline{C}})\in X\cap Z$ for every $\sigma_{\overline{C}}$. This means $X\cap Z\in H_{G'}(C)=E'(C)$, which gives $X\cap Z \in E(C)$. Finally, $X\in E(C)$ follows from outcome monotonicity.
\end{proof}

Since every filter on a finite set is principal, the class of truly playable functions and the class of playable functions coincide whenever the set of outcome states $S$ is finite. 
\section{Finite MV-algebras}\label{appendix:MV}

For a general background on {\L}ukasiewicz logic and MV-algebras see \cite{Cignoli2000,Mundici11}. In this appendix we recall the basic notions and facts about MV-algebras that are needed in this paper.

An \emph{MV-algebra} is an algebra $(A,\oplus,\neg,0)$, where $\oplus$ is a~binary operation, $\neg$ is a~unary operation and $0$ is a constant, such that the following equations are satisfied:
\begin{enumerate}
	\item $(A,\oplus,0)$ is an Abelian monoid,
	\item $\neg(\neg x)=x$,
	\item $\neg 0 \oplus x=\neg 0$,
	\item $\neg (\neg x \oplus y) \oplus y=\neg(\neg y\oplus x) \oplus x$.
\end{enumerate}
\noindent 
We introduce the new constant $1$ and two additional operations $\odot$ and $\to$ as follows:
\begin{align*}
1 &= \neg 0,\\  x\odot y&=\neg (\neg x \oplus \neg y),\\  x\to y&=\neg x \oplus y.
\end{align*}
 
 We say that an MV-algebra  $(A,\oplus,\neg,0)$ is \emph{finite} whenever $A$ is finite.
As usual we will say that ``$A$ is an MV-algebra'' when no danger of confusion arises. For every MV-algebra $A$, the binary relation $\leq$ on $A$ given by
\[
x\leq y \quad\text{ whenever}\quad  x \rightarrow y=1
\]
is a partial order. As a matter of fact, $\leq$ is a lattice order induced by the join $\vee$ and the meet $\wedge$ operations defined by
\begin{align*}
x\vee y&=\neg (\neg x \oplus y) \oplus y, \\ x\wedge y&=\neg(\neg x \vee \neg y),
\end{align*}
respectively. Thus defined, the lattice reduct of $A$ is a distributive lattice with  top element $1$ and  bottom element $0$. If the order $\leq$ of $A$ is total, then $A$ is said to be an \emph{MV-chain}.

The algebraic semantics of finite-valued {\L}ukasiewicz logics is given by finite MV-chains. The standard example of a finite MV-chain is a \emph{finite {\L}ukasiewicz chain} given by
 \[
\lucas_n=\left\{0, \tfrac{1}{n}, \ldots, \tfrac{n-1}{n}, 1\right\}, \quad \text{where $n$ is a positive integer.}
\]
For every $x,y\in\lucas_n$, put 
\begin{align*}
\neg x &=1-x, \\
x \oplus y& =\min(x+y, 1)
\end{align*}
Then $(\lucas_n,\oplus,\neg,0)$ becomes an MV-chain, where the lattice operations $\wedge$ and $\vee$ are the minimum and the maximum of $x,y\in\lucas_n$,  respectively. Further derived operations $\odot$, $\rightarrow$ and $\leftrightarrow$ on $\lucas_n$ are given by
\begin{align*}
x\odot y&=\max (x+y-1, 0), \\
x\iimplies y&=\min(1, 1-x+y),\\
x\leftrightarrow y&=1-\lvert x-y\rvert.
\end{align*}
 Observe that the choice $n=1$ gives a two-element {\L}ukasiewicz chain $\lucas_1=\{0,1\}$, in which $\oplus$ coincides with $\vee$ and $\odot$ coincides with $\wedge$. The semantics of classical propositional logic is thus determined by $\lucas_1$. On the other hand, the algebraic semantics of  finite $(n+1)$-valued {\L}ukasiewicz logic with $n\geq 2$ is given by the variety of MV-algebras $\var{MV}_n$ that is axiomatized by Grigolia's axioms \cite{Grigolia77}:
\begin{enumerate}
	\item $\bigodot_{i=1}^n x= \bigodot_{i=1}^{n+1} x$,
	\item $\bigoplus_{i=1}^{n+1} \bigodot_{j=1}^m x=\bigodot_{i=1}^{n+1}\bigoplus_{j=1}^m\bigodot_{k=1}^{m-1} x$,
\end{enumerate}
for every integer $m\in \{2,\dots,n-1\}$ that does not divide $n$. Moreover, it is known that $\var{MV}_n$ is generated (as a variety) by the {\L}ukasiewicz chain $\lucas_n$.

An \emph{MV-filter} (or a \emph{filter}) in an MV-algebra $A$ is a subset $F\subseteq A$ such that
\begin{enumerate}
	\item $1 \in F$,
	\item if $x,y\in F$, then $x\odot y\in F$,
	\item if $x\in F$ and $x\leq y \in A$, then $y\in F$.
\end{enumerate}
\noindent
A \emph{principal filter} in $A$ is a filter $F$ for which there exists $x\in A$ such that $F$ coincides with the smallest filter containing $x$. If $A$ is finite, this means simply $F=\{y\in A \mid y\geq \bigodot_{i=1}^n x \}$ for some $x\in A$.

\end{document}